\documentclass[11pt]{amsart}

\usepackage{amsmath,amssymb,latexsym}
\usepackage{dsfont}
\usepackage[T1]{fontenc}
\usepackage{enumerate}
\usepackage{eqnarray}
\usepackage{mathtools}
\usepackage{url}
\usepackage{xcolor}

\usepackage[a4paper,top=3cm, bottom=3cm, inner=3.5cm,outer=2.7cm,nofoot,headsep=1.5cm]{geometry}
\def\Ind#1#2{#1\setbox0=\hbox{$#1x$}\kern\wd0\hbox to 0pt{\hss$#1\mid$\hss}
\lower.9\ht0\hbox to 0pt{\hss$#1\smile$\hss}\kern\wd0}
\def\Notind#1#2{#1\setbox0=\hbox{$#1x$}\kern\wd0\hbox to 0pt{\mathchardef
\nn="3236\hss$#1\nn$\kern1.4\wd0\hss}\hbox to 0pt{\hss$#1\mid$\hss}\lower.9\ht0
\hbox to 0pt{\hss$#1\smile$\hss}\kern\wd0}

\theoremstyle{plain}
\newtheorem{theorem}{Theorem}[section]
\newtheorem*{theorem*}{Theorem}
\newtheorem{prop}[theorem]{Proposition}
\newtheorem{fact}[theorem]{Fact}
\newtheorem{lemma}[theorem]{Lemma}

\theoremstyle{definition}
\newtheorem{defn}[theorem]{Definition}
\newtheorem{definition}[theorem]{Definition}
\newtheorem{remark}[theorem]{Remark}

\newtheorem{problem}[theorem]{Problem}

\newcommand{\tp}{\operatorname{tp}}

\newcommand{\alg}{\operatorname{alg}}
\newcommand{\Th}{\operatorname{Th}}

\newcommand{\eq}{\operatorname{eq}}
\newcommand{\sscl}{\operatorname{sscl}}
\newcommand{\ch}{\operatorname{char}}
\newcommand{\defeq}{:=}

\newcommand{\supp}{\operatorname{supp}}

\title{Combinatorics in one-based and related structures}
\author{Artem Chernikov and Sergei Starchenko}

\begin{document}

\maketitle
\begin{abstract}
We consider some extremal combinatorial questions for bipartite graphs definable in one-based and related structures. We show that they satisfy both strong Erd\H{o}s-Hajnal property and linear Zarankiewicz. We also show that the same is true for  both collapsed and uncollapsed Hrushovski's ``ab initio'' constructions, and discuss some connections to Zilber's trichotomy principle. For strong Erd\H{o}s-Hajnal, we show that in fact it holds in a more general class of $1$-semi-equational theories.
\end{abstract}

\section{Introductions}
This note contains some results on combinatorics of definable relations in stable one-based theories made by the authors mostly prior to and during the 2018 Institut Henri Poincar\'e trimester ``Model Theory, Combinatorics and Valued fields'' and presented on several occasions since then (starting with \cite{TalkPanhel} for strong Erd\H{o}s-Hajnal and \cite{TalkBedlewo} for linear Zarankiewicz). As well as a more recent result on the so-called $1$-semiequational theories (a generalization of $1$-based from stability to NIP proposed in \cite{chernikov2023semi}). One-basedness is a central model theoretic notion of ``linearity'', and \emph{one based} theories form a subclass of stable theories with the behavior of forking  independence similar to vector spaces or modules (as opposed to fields), corresponding to local modularity in the strongly minimal case. 

\begin{theorem*}[Theorem \ref{thm: sEH in 1based}]
	Every relation definable in a one-based theory $T$ satisfies the strong Erd\H{o}s-Hajnal property (Definition \ref{def: sEH}), and moreover admits a (not necessarily definable in $T$) distal cell decomposition (Definition \ref{def: distal cell decomp}).
\end{theorem*}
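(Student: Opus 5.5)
The plan is to reduce to a structural description of definable sets in one-based theories. By the Hrushovski--Pillay theorem, in a one-based theory every definable subset of $G^n$, for $G$ a type-definable (or definable) group, is a Boolean combination of cosets of connected type-definable subgroups. More generally, one-basedness means every definable relation $E(x,y)$ is, uniformly in parameters, a Boolean combination of ``equation-like'' relations.

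Concretely, for a definable $E(x,y)\subseteq M^{x}\times M^{y}$ I would use the fact that in a one-based theory every formula $\varphi(x;y)$ is equivalent to a Boolean combination of formulas $\psi(x;y)$ each of which is an \emph{equation} in the sense of Srour, i.e.\ the family $\{\psi(M;b)\}$ has the DCC on finite intersections. Moreover, by weak elimination of imaginaries one can take $\psi(x;y)$ of the form $f(x)=g(y)$ with $f,g$ (imaginary-valued) definable functions; this is the linearity phenomenon, that canonical bases are interalgebraic with parameters from single realizations. Accepting this normal form (a finite Boolean combination of relations $\{f_i(x)=g_i(y)\}$, possibly after replacing equality by a finite-to-finite correspondence through $\operatorname{acl}^{\eq}$), the remaining steps are combinatorial.

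First, I would show that a relation of the form $f(x)=g(y)$ admits a distal cell decomposition of bounded complexity. Given a finite set $B$ of parameters $b$, the values $\{g(b): b\in B\}$ partition the $x$-space into the fibres $f^{-1}(c)$ for $c\in g(B)$, together with the complement. Each is crossed by no $E(x,b)$, so there are at most $|B|+1$ cells, each defined using one parameter (the complement cell is not definable uniformly with boundedly many parameters, but it can be described as ``$f(x)\notin g(B)$''). This is exactly why the decomposition is not necessarily definable in $T$: it has bounded size in terms of $B$ but involves a negation over all of $B$. Second, distal cell decompositions (or at least the combinatorial consequence, strong Erd\H{o}s--Hajnal via the cutting-lemma argument) are preserved under finite Boolean combinations: take the common refinement of the decompositions for each $\psi_i$, whose number of cells is polynomial in $|B|$. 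Third, apply the result stated earlier (Definition \ref{def: distal cell decomp} and the standard argument of Chernikov--Galvin--Starchenko) that a relation with a distal cell decomposition of polynomial size satisfies strong Erd\H{o}s--Hajnal. Alternatively, strong EH can be shown directly for $f(x)=g(y)$: either some fibre class contains a positive proportion of both sides, giving a complete pair, or one finds large empty pairs by splitting the value set $f(A)\cup g(B)$.

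The main obstacle is the first step: getting a genuine normal form where every instance is a Boolean combination of relations $f(x)=g(y)$ with $f,g$ $\emptyset$-definable (or definable over a fixed small set), uniformly and with finitely many pieces, rather than only the local DCC statement. I would first try the Hrushovski--Pillay approach: for $b$, the canonical base of $\tp(a/\operatorname{acl}^{\eq}(b))$ lies in $\operatorname{acl}^{\eq}(a)\cap\operatorname{acl}^{\eq}(b)$. Then, by compactness, $E(x,b)$ is determined up to finitely many choices by the value of an imaginary function of $b$ together with a finite cover, which yields the finite-to-finite correspondence form. A finite-to-finite version only multiplies the cell count by a constant.
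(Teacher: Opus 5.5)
There is a genuine gap, and it lies in your first and third steps, not only in the normal form you flag. For $f(x)=g(y)$ you build a decomposition consisting of the fibres $\{x : f(x)=g(b)\}$, $b\in B$, plus one leftover cell $\{x: f(x)\notin g(B)\}$. This is not a distal cell decomposition in the sense of Definition~\ref{def: distal cell decomp}(5). ``Not necessarily definable'' only means that the fixed relation $D\subseteq X\times Y^{k}$ need not be definable in $T$. Every cell must still be of the form $D_{(b_1,\dots,b_k)}$ with $k$ parameters from $B$, where $k$ does not depend on $B$. Your leftover cell depends on all of $B$.

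This is not a technicality: the bounded-parameter property is exactly what the cutting lemma (Fact~\ref{fac: distal implies cutting}) uses, and without it your third step fails. Every NIP relation has a polynomial-size family of $B$-complete cells (the $\varphi$-types over $B$), yet the stable edge relation in the Ramanujan-graph example of the last section fails sEH.

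The missing idea is the one in the paper's first proof: fix an arbitrary linear order on $M^y$ in which each $E_\varphi$-class is convex (for a single $f(x)=g(y)$, simply order the value set). Then for a $k$-weakly normal $\varphi$, each $\varphi(a,M^y)$ is a union of at most $k$ closed intervals with endpoints $f^\ell_t(a),f^r_t(a)$. The $\varphi$-type of $a$ over $B$ is then pinned down by the $4k$ neighbours in $B$ of these endpoints. For $f(x)=g(y)$ this just covers your leftover region by the gaps $g(b)<f(x)<g(b')$ between consecutive elements of $g(B)$, each cut out by two parameters. (Also, passing to a finite-to-finite correspondence raises the exponent rather than multiplying the cell count by a constant, though that is harmless.)

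Your normal form is also false as stated, though this part is easy to repair. Each relation $f(x)=g(y)$ with $f,g$ definable is normal, since its instances are fibres of $f$. So you are claiming that every formula is a Boolean combination of normal formulas, which Hrushovski--Pillay give only for expansions of groups. For example, take the trivial strongly minimal graph consisting of infinitely many disjoint $\mathbb{Z}$-lines. There adjacency is $3$-weakly normal, but $\emptyset$-definable binary relations depend only on distance, and the only normal ones are $x=y$, $\top$ and $\bot$.

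The right input is Fact~\ref{fac: 1-based iff bool comb weakly normal}, and no canonical-base argument is needed, because definability plays no role in the combinatorics. A $k$-weakly normal $\varphi$ is the disjunction of the relations $f_i(x)=g(y)$, $1\le i\le k-1$. Here $g(y)$ is the $E_\varphi$-class of $y$, and the $f_i$ are arbitrary choice functions listing the at most $k-1$ classes whose instances contain $x$.

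With this in hand, the sEH half goes through by a route more elementary than either of the paper's proofs. Combine your direct argument for $f(x)=g(y)$ with closure of sEH under Boolean combinations. That direct argument does work: if some value carries a large share of both pushed-forward measures you get a complete pair, and otherwise a random split of the value set gives an anticomplete pair. The distal cell decomposition, however, still needs the ordering trick above.
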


In fact, using a recent result \cite{scott2023pure} in structural graph theory, we also prove a more general result:
\begin{theorem*}[Theorem \ref{thm: 1-semieq sEH}]
	Every relation definable in a $1$-semi-equational theory $T$ (Definition \ref{defn: 1-semieq}) satisfies the strong Erd\H{o}s-Hajnal property.
\end{theorem*}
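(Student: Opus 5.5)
We plan to deduce the strong Erd\H{o}s-Hajnal property from the pure-pair results of \cite{scott2023pure}, which concern graphs omitting a fixed bipartite pattern in a strong ``ordered/induced'' sense. Recall that in a $1$-semi-equational theory every definable relation $R(x,y)$ is, after possibly naming finitely many parameters and passing to a Boolean combination, a Boolean combination of relations $E(x,y)$ that are \emph{$1$-semi-equational}. Such an $E$ forbids a specific configuration. There is no infinite sequence $(a_i,b_i)$ with $E(a_i,b_j)$ for $i\neq j$ and $\neg E(a_i,b_i)$ (or the analogous half-graph-type pattern prescribed in Definition \ref{defn: 1-semieq}). By compactness this gives a uniform finite $k$ such that the bipartite graph $E$ restricted to any finite $A\times B$ contains no induced copy of the corresponding $k$-pattern (e.g.\ the ``co-matching'' of size $k$ with one side of a half-graph). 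Since the strong Erd\H{o}s-Hajnal property (Definition \ref{def: sEH}) is preserved under Boolean combinations, it will suffice to handle a single such $E$. We would verify this preservation by iterating the property: apply it to each atom and shrink to a pair of linear-size subsets on which every atom is homogeneous. This works because the number of atoms is bounded and each step loses only a constant factor.

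The key step is to show that a bipartite graph avoiding such a fixed induced pattern, together with its complement condition, has a linear-size pure pair. Here we would invoke \cite{scott2023pure}. Their theorem states that for bipartite graphs $H$ that are forests (or for ordered patterns of the appropriate shape), every $H$-free graph $G$ for which the complement is also sparse-enough admits a pure pair $A',B'$ with $|A'|,|B'|\geq \varepsilon n$. Concretely, we would check the dichotomy. If $E$ restricted to $A\times B$ has density at most $\varepsilon$ or at least $1-\varepsilon$ on some linear-size piece, the Scott--Seymour--Spirkl theorem for excluded induced forests applies, because the forbidden $1$-semi-equational configuration is a matching-type pattern, which is a forest, and its complement pattern is also controlled. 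Otherwise we use a density-increment or Rödl-type argument (also from \cite{scott2023pure}) to pass to a part where one of the sparse cases holds.

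The main obstacle will be matching the exact combinatorial shape of the pattern forbidden by $1$-semi-equationality with the hypotheses of \cite{scott2023pure}. That theorem needs both $H$ and the bipartite complement of some $H'$ to be forests, or a corresponding ordered condition. Semi-equationality only forbids one-sided patterns, so the complementary side must come from NIP. Here we would use that $E$ is NIP, hence has finite VC-dimension, so it omits some bipartite induced pattern on both sides. We would then combine this with the $1$-semi-equational pattern to get the two forbidden forests required. If this alignment fails directly, a fallback is to use NIP to reduce to a Ramsey-type bipartite pattern of bounded size first. After that we apply the $1$-semi-equational pattern only on the remaining dense side.
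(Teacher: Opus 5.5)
There is a genuine gap. You never exhibit a forest that $E$ itself omits, and the source you propose for one cannot supply it.

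The only pattern you extract from the hypothesis is the induced co-matching $E(a_i,b_j)\iff i\neq j$, which is the plain semi-equation condition. That bigraph is not a forest, since it contains cycles. Its bipartite complement, a perfect matching, is a forest. So co-matching-freeness says that $\neg E$ omits an induced matching. Fact~\ref{fact:sss} applied to $\neg E$ therefore only handles the \emph{dense} case: it produces a complete pair for $E$ when $E$ has density at least $\tau$.

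For density at most $1-\sigma$ you need an induced forest omitted by $E$. Co-matching-freeness together with NIP (or even stability) cannot suffice, whatever the argument. Consider the point--line incidence relation over an algebraically closed field of characteristic $p$:
\begin{itemize}
\item it is stable;
\item it omits the induced co-matching of size $4$, because two lines sharing two points coincide;
\item it fails sEH on the finite planes $\mathbb{F}_q^2$. It is $K_{2,2}$-free, so there are no large complete pairs. The incidence bound $\bigl|I(P,L)-|P||L|/q\bigr|\leq\sqrt{q|P||L|}$ excludes anticomplete pairs with both sides of size $\varepsilon q^2$ once $q>\varepsilon^{-2}$.
\end{itemize}

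The missing idea is to use the actual $(k,1)$-condition of Definition~\ref{defn: 1-semieq}: any $k$ fibres with a common point contain a nested pair. This is strictly stronger than forbidding co-matchings, and it fails for the incidence relation. It forbids the induced tree $T_k$: a vertex $a_0$ adjacent to $b_1,\dots,b_k$, where each $b_i$ has a private neighbour $a_i$. Any nesting $F_{b_p}\subseteq F_{b_q}$ is refuted by $a_p$.

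As $T_k$ is a forest, Fact~\ref{fact:sss} applies directly when the density is at most $1-\sigma$. That result needs only one forest and a density bound, so no second forest and no density-increment step are required.

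For density above $1-\sigma$ the paper argues directly. Take the $b$ whose fibre has $\mu$-measure at least $1-\eta$. Any $k$ of these fibres intersect, so they admit no $k$-antichain under inclusion. By Dilworth, some chain $C$ is heavy, and its minimal element $b_0$ yields the complete pair $(A\cap F_{b_0})\times C$. Your co-matching observation, correctly oriented, would give an alternative for this case via Fact~\ref{fact:sss} applied to $\neg E$.

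Finally, sEH is stated for arbitrary finitely supported measures, so you also need a blow-up step. It preserves induced $H$-freeness because $T_k$ and the matching have no same-side twins.
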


\begin{theorem*}[Theorem \ref{thm: bool comb weak norm is comb lin}]
	Every relation definable in a one-based theory satisfies linear Zarankiewicz (Definition \ref{def: lin Zarank}).
\end{theorem*}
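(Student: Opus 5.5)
The plan is to reduce linear Zarankiewicz for an arbitrary definable relation to the case of ``weakly normal'' relations, and then use the combinatorial structure of weakly normal relations. By a theorem of Hrushovski and Pillay, in a one-based theory every definable relation $E(x,y)$ is a Boolean combination of weakly normal formulas $\varphi_1(x,y),\dots,\varphi_k(x,y)$, possibly with parameters. A formula $\varphi(x,y)$ is \emph{weakly normal} if there is $N$ such that any $N$ pairwise distinct instances $\varphi(M,b_i)$ that are pairwise distinct sets have empty intersection. The proof will therefore have two steps: linear Zarankiewicz for weakly normal relations, and preservation of linear Zarankiewicz under Boolean combinations.

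For the first step, fix $\varphi$ weakly normal with constant $N$, and take finite $A\subseteq M^x$, $B\subseteq M^y$ such that the bipartite graph $\varphi\cap (A\times B)$ contains no $K_{s,s}$. Partition $B$ into classes according to the set $\varphi(M,b)$. If a class has at least $s$ elements, then all of its members have the same neighbourhood in $A$, so that neighbourhood has fewer than $s$ points of $A$. Hence the total contribution of the large classes is at most $s|B|$. Among class representatives, each $a\in A$ lies in at most $N-1$ distinct sets $\varphi(M,b)$. So the number of edges $(a,b)$ with $b$ in a class of size less than $s$ is at most $(N-1)(s-1)|A|$. Altogether the edge count is $O_{N,s}(|A|+|B|)$, which is linear Zarankiewicz.

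The second step is the main obstacle, since $K_{s,s}$-freeness of $E$ does not directly transfer to the $\varphi_i$. I would write $E$ as a disjunction of conjunctions $\bigwedge_i \varphi_i^{\epsilon_i}$ and handle each conjunction $\psi$ separately. The key is a lemma for a conjunction $\psi=\varphi_1\wedge\dots\wedge\varphi_m\wedge\neg\chi_1\wedge\dots\wedge\neg\chi_l$ of weakly normal formulas and their negations: if $\psi\cap(A\times B)$ is $K_{s,s}$-free, then it has $O(|A|+|B|)$ edges. For the positive part, apply the class argument above to $\varphi_1$, which produces a bounded-degree part together with clusters of $b$'s sharing their $\varphi_1$-fibre. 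Then induct on $m+l$, inside each cluster, using the remaining formulas.

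Negated weakly normal formulas are the delicate case. To handle them, I would instead use the equivalent ``$k$-fold'' formulation: the family of fibres $\{\varphi(M,b)\}$ has bounded multiplicity, so the Boolean algebra it generates over finite $A$ has $O(|A|+|B|)$ atoms. I would then count edges via a union of $O(|A|+|B|)$ combinatorial rectangles $A_j\times B_j$, arranged so that each point lies in boundedly many of them. Each rectangle lies entirely inside or entirely outside $E$, and $K_{s,s}$-freeness forces $\min(|A_j|,|B_j|)<s$ for rectangles inside $E$. The bound on the sum of such rectangle sizes then gives linearity. Producing this bounded-overlap rectangle decomposition from weak normality is the step I would expect to require the most care.
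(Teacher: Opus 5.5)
Your Step 1 and your cluster induction for the positive literals are sound. They are essentially the paper's Lemma~\ref{lem: weak normal conj linear}, via $\sum_C |A\cap F_C|\le (N-1)|A|$ over the fibre classes $C\subseteq B$, and you correctly carry the unbalanced $O(|A|+|B|)$ form throughout.

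The gap is that this induction bottoms out at a purely negative conjunction $\neg\chi_1\wedge\dots\wedge\neg\chi_l$ (restricted to $(A\cap F_C)\times C$), and for that case you give no argument:
\begin{enumerate}
\item The atom count is trivially true (the atoms partition $A$) and produces no rectangles.
\item Already for a single $\neg\chi$ with $\chi$ $N$-weakly normal, the natural homogeneous rectangles $(A\setminus F_C)\times C$, one per $\chi$-fibre class $C\subseteq B$, each have $\min<s$. But every $a\in A$ lies in all but at most $N-1$ of them, so summing the bounds $s(|A\setminus F_C|+|C|)$ only gives $O(|A|\,|B|)$.
\item What you actually need is a cover of $E(A,B)$ by rectangles inside $E$ with $\sum_j(|A_j|+|B_j|)=O(|A|+|B|)$. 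This is equivalent to the linear bound itself, since single edges form such a cover. So deferring its construction defers the whole theorem.
\item Such a cover cannot be built from weak normality first and then checked against $K_{s,s}$-freeness only rectangle by rectangle. Take $\chi$ the (normal) equality relation and $A=B$ of size $n$. Bollobás's set-pair inequality shows that any cover of $\{(a,b)\in A\times A: a\neq b\}$ by rectangles avoiding the diagonal has $\sum_j(|A_j|+|B_j|)\ge n\log_2 n$.
\end{enumerate}

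The missing idea is the pivot argument of Lemma~\ref{lem: comp union weak normal lin}, which uses $K_{s,s}$-freeness globally and proceeds by induction on $l$.
\begin{enumerate}
\item For $l=0$, $K_{s,s}$-freeness of $A\times B$ forces $\min(|A|,|B|)<s$, hence at most $(s-1)(|A|+|B|)$ edges.
\item For $l>0$, if $|A|<s$ there is nothing to do. Otherwise fix $a_1,\dots,a_s\in A$, and let $B_0$ be the set of $b\in B$ with $\neg\chi_u(a_i,b)$ for all $u,i$. Then $\{a_1,\dots,a_s\}\times B_0\subseteq E$, so $|B_0|<s$, and $B_0$ contributes at most $(s-1)|A|$ edges.
\item Every other $b$ satisfies $\chi_u(a_i,b)$ for some $u,i$. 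Since all these $\chi_u$-fibres contain $a_i$, weak normality splits $\{b\in B:\chi_u(a_i,b)\}$ into at most $N_u-1$ classes $C$ with a common fibre $F$. Hence $B\setminus B_0$ is covered by at most $s\sum_u(N_u-1)$ such classes.
\item On $(A\cap F)\times C$ the relation $E$ is empty. On $(A\setminus F)\times C$ it coincides with the conjunction with $\neg\chi_u$ deleted, which is still $K_{s,s}$-free there. The induction hypothesis therefore bounds each of these boundedly many pieces by $O(|A|+|B|)$.
\end{enumerate}
With this lemma as the base case, your cluster induction completes the proof. Alternatively, as in the paper, first merge the positive literals into one weakly normal formula by Lemma~\ref{lem: conj of w normal}.
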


\begin{theorem*}[Theorem \ref{thm: ab init lin Zar and sEH}]
	Let $T$ be the theory of a Hrushovski's collapsed or uncollapsed ``ab initio'' structure (it is CM-trivial, but not one based). Then every relation definable in $T$ satisfies both  strong Erd\H{o}s-Hajnal and linear Zarankiewicz.
\end{theorem*}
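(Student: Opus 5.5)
The plan is to reduce to the one-based case as far as possible and treat the remaining non-one-based part of the ab initio structures by hand. The tool is the predimension/dimension calculus $\delta(A)=|A|-r(A)$, together with the description of types over self-sufficient ($\le$-closed, or $d$-closed) sets. The key structural fact is the quantifier-reduction theorem for both the collapsed and uncollapsed theories. Every definable relation $R(x,y)$ is a Boolean combination of formulas of the form $\exists \bar z\, \psi(x,y,\bar z)$, where $\psi$ is quantifier-free and says that $x y \bar z$ forms a specific finite configuration (a minimal extension). Moreover, the type $\tp(ab)$ is determined by the isomorphism type of $\sscl(ab)$. Both strong Erd\H{o}s-Hajnal and linear Zarankiewicz are preserved under Boolean combinations; for sEH we can use the cell-decomposition formulation. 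It therefore suffices to handle a single relation $R(x,y)$ given by ``the closures of $x$ and $y$ are joined in a prescribed way''.

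Step 1 (decomposition of the graph via closures). For $a\in A$, $b\in B$, consider $\sscl(a)$ and $\sscl(b)$. By CM-triviality and the free-amalgamation description of independence, $\tp(ab)$ is determined by three things: $\tp(\sscl(a))$, $\tp(\sscl(b))$, and the finite ``interaction'' of $\sscl(a)$ and $\sscl(b)$. This interaction is their intersection together with the finitely many edges/relations and minimal extensions over their union, all of size bounded in terms of $|x|,|y|$ by the predimension inequality. When $\sscl(a)\cap\sscl(b)$ is empty and there is no minimal extension over their union, the pair is ``free''. Free pairs all have the same type given the types of $a$ and $b$, so on free pairs $R$ depends only on $\tp(a)$ and $\tp(b)$. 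Hence $R$ restricted to free pairs is a finite union of products of unary sets, and products are trivially fine for both properties.

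Step 2 (the non-free pairs are sparse and one-based-like). Non-free pairs come in two kinds. In the first, $a$ and $b$ share a closure element; that is, $b$ is related to some element of $\sscl(a)$ through a bounded configuration. In the second, some minimal extension exists over $\sscl(a)\cup\sscl(b)$. The first kind is an equivalence/coordinate-sharing relation: it is a bounded union of relations $f_i(a)=g_j(b)$ with $f_i,g_j$ multivalued maps of bounded fibre (closures are finite of bounded size). Such relations behave like the equational, one-based graphs handled in Theorem~\ref{thm: bool comb weak norm is comb lin} and Theorem~\ref{thm: sEH in 1based}, so we apply the same argument. For the second kind, in the collapsed case a minimal extension has boundedly many realizations ($\mu$-function). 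In the uncollapsed case the existence of such an extension is a condition on the finite set $\sscl(a)\cup\sscl(b)$ of the form ``these elements have $\delta$ dropping'', and the count of edges forces the graph to be $K_{s,s}$-free with linearly many edges. We prove this by a double-counting with predimension. Let $H$ be the bipartite graph on $m$ vertices $a$ and $n$ vertices $b$. Take the union $C$ of all closures; then $\delta$ of the union of the relevant configurations is at least $0$. Each non-free edge contributes a drop of at least $1$, while the total available is $\le \sum\delta(\sscl(a))+\sum\delta(\sscl(b))=O(m+n)$, once the drops are shown to be accounted for disjointly via submodularity. This yields linear Zarankiewicz for the non-free part.

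Step 3 (assembling). With the partition into a product part, a one-based-like part, and a predimension-sparse part, sEH follows by the standard argument: pick types realized on a positive proportion, then either the product part gives a homogeneous pair directly, or we apply the one-based result. Linear Zarankiewicz follows because a $K_{s,s}$-free $R$ restricted to each piece is still $K_{s,s}$-free, the product pieces contribute $0$ edges when they are $K_{s,s}$-free, and the other pieces are linear. The main obstacle I expect is the submodularity accounting in Step 2: ensuring the predimension drops from distinct edges are not double counted when the closures overlap heavily. I would first try to handle this by ordering the $b$'s and adding them one at a time, bounding each increment via $\delta(X\cup \sscl(b)) \le \delta(X)+\delta(\sscl(b)/X)$ and using $K_{s,s}$-freeness to bound the number of $a$'s for which the increment can vanish.
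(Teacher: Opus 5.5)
There is a genuine gap. The step you flag as the ``main obstacle'' is where essentially all the content lies, and several earlier steps fail as stated.

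(i) Your opening reduction is invalid for linear Zarankiewicz. Unlike sEH, it is not closed under Boolean combinations, not even under negation. For points and lines in $\mathbb{R}^2$, non-incidence satisfies it trivially: if it is $K_{t,t}$-free on $A\times B$ with $|A|\geq 2t$, then all but fewer than $2t$ lines of $B$ meet both of two disjoint $t$-subsets $A_1,A_2\subseteq A$, and each such line is spanned by a pair in $A_1\times A_2$, so $|B|<t^2+2t$. Incidence fails it, by the Szemer\'edi--Trotter example in the proof of Proposition~\ref{prop: interp, fields, comb lin}(3). The same problem sits in your Step~3. On free pairs $R$ is not a product but a product minus the non-free relation, so $K_{s,s}$-freeness of $R\cap\mathrm{Free}$ does not make the product small. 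Controlling such ``product minus sparse'' relations is exactly the inductive Lemma~\ref{lem: comp union weak normal lin}.

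(ii) The boundedness claims are false. $\sscl$ of a tuple of fixed length has unbounded size: in the generic graph with $\delta(B)=2|B|-|R(B)|$, a point on a large connected $4$-regular subgraph has that whole subgraph as its closure. There are also infinitely many types. Hence ``finite union of products'', ``bounded union of relations $f_i(a)=g_j(b)$'' and ``a type realized on a positive proportion'' all fail. $\mathrm{Free}$ and the non-free pieces are only $\bigvee$-definable, so Theorems~\ref{thm: sEH in 1based} and~\ref{thm: bool comb weak norm is comb lin} do not apply to them. Note that a relation $C_a\cap D_b\neq\emptyset$ with unbounded $|C_a|,|D_b|$ can encode any bipartite graph.

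(iii) The predimension count of Step~2 is not carried out, and its naive form is false. In the same generic graph, take two adjacent points $z_1,z_2$ both joined to $a_1,\dots,a_k,b_1,\dots,b_k$. This configuration has $\delta=3$ and embeds self-sufficiently in $M_0$. In it, $\sscl(a_i)=\{a_i\}$, $\sscl(b_j)=\{b_j\}$, and $\delta(z_1z_2/a_ib_j)=-1$ for all $k^2$ pairs. So the drops overlap completely, and any linear bound must use $K_{s,s}$-freeness in an essential way, which you leave open. The collapsed case via $\mu$ and the sEH assembly in Step~3 are likewise only asserted.

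The missing idea is that no hands-on predimension combinatorics is needed. The paper embeds the collapsed structure $M$ self-sufficiently into a model $M_0$ of $T_{\mathrm{uncol}}$. Every $T_{\mathrm{col}}$-formula is a Boolean combination of special formulas (existential, with witnesses in $\sscl$), and these are absolute under self-sufficient embeddings. So every relation definable in $M$ is the trace of a relation definable in $M_0$, and both properties pass to such traces. The paper then uses Evans' theorem that $M_0$ is a reduct of a one-based structure $N_0$. Theorems~\ref{thm: sEH in 1based} and~\ref{thm: bool comb weak norm is comb lin} then give sEH and in fact combinatorial linearity.
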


This contributes to the emerging connection between model theoretic linearity and combinatorial linearity, pointing that the geometry of forking independence and presence of algebraic structure (Zilber's trichotomy principle and its variants) is reflected by certain asymptotic finite counting properties in sufficiently tame classes of theories.
 In particular, Theorem \ref{thm: bool comb weak norm is comb lin} strengthens  the ``sparse pseudoplanes'' property from \cite[Proposition 3.1]{evans2005trivial} (which gives an analogous conclusion, but assuming nfcp and that the relation is $K_{t,t}$-free globally).  Almost linear Zarankiewicz for semi-linear hypergraphs, and more generally for Boolean combinations of basic relations (Definition \ref{def: basic rels}),  is proved in \cite{basit2021zarankiewicz}; it it also shown there that for $o$-minimal theories, almost linear Zarankiewicz is equivalent to local modularity. Strong Erd\H{o}s-Hajnal for graphs with bounded VC-minimal complexity is established in \cite{fu2023note} (Theorem \ref{thm: 1-semieq sEH} generalizes this).  It is demonstrated in \cite{basit2025shatter} that semilinear set systems have integer valued VC-density. The global version of (almost) linear Zarankiewicz in $o$-minimal and related contexts is further studied in \cite{eleftheriou2025global}. Further evidence that almost linear Zarankiewicz can be viewed as a model theoretic notion of linearity in various contexts is provided in \cite{walsberg2026trace}.  A possible generalization of weakly normal formulas from stability to NIP, the so called 1-semi-equations, is proposed and studied in \cite{chernikov2023semi}. One-basedeness and local modularity also play a key role in the model theoretic approach to Elekes-Szab\'o-type theorems \cite{hrushovski2013pseudo, chernikov2021model, zbMATH07452921, chernikov2024model}.

We conclude with some open questions and research directions.

\section{Strong Erd\H{o}s-Hajnal in one-based theories}

\subsection{One based theories and weakly normal formulas}
Most relevant for us here, one based theories admit a  combinatorial characterization in terms of the so-called \emph{weakly normal} formulas (\cite{hrushovski1987weakly}, see also \cite{pillay1996geometric}). We let $M$ be a model of a complete theory $T$.

\begin{defn}
	\begin{enumerate}
		\item We say that a partitioned formula $\varphi(x,y)$ is \emph{$k$-weakly-normal} if for any $b_1, \ldots, b_k \in M^{y}$ with $\bigcap_{i=1}^k \varphi(M,b_i) \neq \emptyset$  we have $\varphi(M,b_i) = \varphi(M,b_j)$ for some $i \neq j \in [k]$.
		\item $\varphi(x,y)$ is \emph{weakly normal} if it is $k$-weakly normal for some $k \in \mathbb{N}$.
		\item $\varphi(x,y)$ is \emph{normal} if it is $2$-weakly normal.
	\end{enumerate}
\end{defn}
\begin{fact}\label{fac: 1-based iff bool comb weakly normal}\cite{hrushovski1987weakly}
	A stable theory $T$ is $1$-based if and only if every partitioned formula $\varphi(x;y)$, with $x,y$ arbitrary tuples of variables, is equivalent to a Boolean combination of weakly normal formulas. If $T$ is an expansion of a group, it is $1$-based if and only if every partitioned formula $\varphi(x,y)$ is a Boolean combination of  normal formulas.
\end{fact}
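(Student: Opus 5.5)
The plan is to pass through the geometric definition of one-basedness: $T$ is $1$-based iff for all $a$ and all $B$, $\mathrm{Cb}(\mathrm{stp}(a/B)) \subseteq \mathrm{acl}^{\eq}(a)$. Both directions rest on one reformulation of weak normality. A formula $\varphi(x,y)$ is weakly normal iff whenever $\models \varphi(a,b)$, the canonical parameter $\ulcorner \varphi(x,b)\urcorner$ lies in $\mathrm{acl}^{\eq}(a)$. The forward implication holds because $k$-weak normality says $a$ lies in fewer than $k$ distinct sets of the form $\varphi(M,b')$. The reverse implication holds by compactness, since a uniform bound on the number of conjugates can be extracted. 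I would prove this lemma first.

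($\Leftarrow$) Assume every formula is a Boolean combination of weakly normal formulas. Let $p = \tp(a/M)$ with $M$ a model. Then $p$ is determined by its restrictions $p|\varphi$ to weakly normal $\varphi$, so $\mathrm{Cb}(p)$ is interdefinable with the union of the canonical bases of these $\varphi$-types.

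For a weakly normal $\varphi$, the realization $a$ lies in only finitely many distinct instances $\varphi(M,b)$. Hence the $\varphi$-definition $d_p\varphi(y)$ is equivalent to a finite disjunction of conditions of the form ``$\varphi(x,y)$ and $\varphi(x,b_i)$ define the same set'', for those $b_i \in M$ with $\varphi(x,b_i) \in p$. Consequently the canonical base of $p|\varphi$ lies in the definable closure of the finitely many canonical parameters $\ulcorner\varphi(x,b_i)\urcorner$. Each of these is in $\mathrm{acl}^{\eq}(a)$ by the lemma, which gives $\mathrm{Cb}(p) \subseteq \mathrm{acl}^{\eq}(a)$.

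To handle an arbitrary base $B$, I would take $M \supseteq B$ with $a \Ind{}{} _B M$. The canonical base of $\mathrm{stp}(a/B)$ is then that of its nonforking extension to $M$.

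($\Rightarrow$) Assume $T$ is $1$-based and fix $\varphi(x,y)$. For any $a,b$ put $c = \mathrm{Cb}(\mathrm{stp}(a/b))$. By $1$-basedness applied on both sides, $c \subseteq \mathrm{acl}^{\eq}(a) \cap \mathrm{acl}^{\eq}(b)$, and $a \Ind{}{} _c b$. By stationarity over $\mathrm{acl}^{\eq}(c)$, the type $\tp(ab)$ is determined by $\tp(ac)$, $\tp(bc)$ and this independence.

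I would pick $\theta(x,z) \in \tp(ac)$ witnessing both that $c$ is algebraic over $a$ and the relevant part of $\mathrm{stp}(a/c)$. Then $\theta(x,z)$, possibly modified by a finite-conjugate condition, is weakly normal by the lemma, since its parameter is algebraic over any realization. Doing the same on the $b$-side and eliminating $z$ using that $c$ has only finitely many possibilities over $b$, I get a Boolean combination of weakly normal formulas in $(x;y)$ that isolates $\varphi$ or $\neg\varphi$ near $(a,b)$. Compactness over all pairs $(a,b)$ then yields finitely many such combinations whose union is $\varphi$.

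I expect this elimination of the auxiliary parameter $c$ while keeping weak normality to be the main obstacle. One must check that ``there exists a $c$ among the finitely many conjugates over $b$ such that $\theta(x,c)$'' is still a Boolean combination of weakly normal formulas in $x;y$. I would try to handle this by coding the finite set of conjugates as a single imaginary in $\mathrm{dcl}^{\eq}(b)$ and using that finite unions of weakly normal families remain weakly normal.

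For the group case, one-basedness implies that every definable subset of $G^n$ is a Boolean combination of cosets of $\mathrm{acl}^{\eq}(\emptyset)$-definable subgroups; this is the Hrushovski–Pillay result, applied to connected components of stabilizers of generic types. Formulas of the form $x \in y\cdot H$ are normal, since two cosets are either equal or disjoint. The converse in the group case follows from the general case, because normal formulas are weakly normal.
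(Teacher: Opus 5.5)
The paper gives no proof of this Fact; it is quoted from Hrushovski--Pillay, so the comparison is with the standard argument. Your characterization of weak normality (the canonical parameter of $\varphi(x,b)$ lies in $\mathrm{acl}^{\eq}(a)$ whenever $\models\varphi(a,b)$) and your ($\Leftarrow$) direction are correct and are the usual proof. They also go through verbatim if the weakly normal formulas carry parameters in $\mathrm{acl}^{\eq}$ of the original parameters.

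The gap is in ($\Rightarrow$), exactly at the step you flagged, and it cannot be closed in the form you aim for. First, the tool you propose does not do the job. A union of two weakly normal \emph{families} is weakly normal, but the pointwise union $\bigcup_{c\in\theta(M,b)}\psi(M,c)$ over a $b$-algebraic set of parameters is not; already $x=z_1\lor x=z_2$ is not weakly normal in $(x;z_1z_2)$. Second, no elimination of $z$ into the partition $(x;y)$ is possible at all. Let $M=A\sqcup[A]^2$ with a predicate for $A$ and the membership relation $R(x,y)$ between $A$ and $[A]^2$. It is bi-interpretable with the pure set $A$, hence stable and $1$-based. Let $\psi(x;y)$ be weakly normal in $(x;y)$, even over parameters $d$, and put $D=A\cap\mathrm{acl}(d)$. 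On $(A\setminus D)\times[A\setminus D]^2$ there are only two $\mathrm{Aut}(M/d)$-orbits, namely $a\in b$ and $a\notin b$. If $\psi$ contained exactly one of them, the traces on $A\setminus D$ of the sets $\psi(M,b)$ would be the pairwise distinct sets $b$ (resp.~$(A\setminus D)\setminus b$), and infinitely many of them would contain a given $a$, contradicting weak normality. Hence every Boolean combination of such formulas contains both orbits or neither, so $R(x;y)$ is not one. Instance-wise there is no problem: $R(x,b)$ is $x=b_1\lor x=b_2$ with $b_1,b_2\in\mathrm{acl}(b)$. So the statement, read with weakly normal formulas in the same partition $(x;y)$, is false, and compactness over pairs $(a,b)$ cannot produce such formulas. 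It has to be read, and proved, instance-wise.

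The missing idea is to fix $b$, put $B=\mathrm{acl}^{\eq}(b)$, and show that instances $\psi(x,c)$ of weakly normal $\psi(x;w)$ with $c\in B$ separate complete types over $B$. Compactness in $x$ alone then makes $\varphi(x,b)$ a Boolean combination of finitely many of them. The separation uses your ``finite-conjugate modification'': if $\alpha(x,w)$ is uniformly algebraic in $w$, then $\chi(x,w)\land\alpha(x,w)$ is weakly normal for every $\chi$. Suppose $a,a'$ satisfy the same such instances. Let $e,e'$ be the canonical bases of $\tp(a/B)$ and $\tp(a'/B)$, so $e\subseteq B\cap\mathrm{acl}^{\eq}(a)$ and $e'\subseteq B\cap\mathrm{acl}^{\eq}(a')$ by $1$-basedness. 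Applying the trick to $\alpha$ alone gives $e\cup e'\subseteq\mathrm{acl}^{\eq}(a)\cap\mathrm{acl}^{\eq}(a')$. Applying it to arbitrary $\chi$ gives $a\equiv_E a'$ for $E=\mathrm{acl}^{\eq}(ee')\subseteq B$. Both types over $B$ do not fork over $E$, and $\tp(a/E)$ is stationary, so $\tp(a/B)=\tp(a'/B)$. This also replaces your attempt to encode the independence of $a$ and $b$ over $c$ by formulas, which no formula does.

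If you want a statement on all of $M^x\times M^y$, which is what the paper uses later, a further compactness argument in $b$ bounds the templates. A non-definable choice $c=g(b)$ of the algebraic parameters then makes $\varphi$ a Boolean combination of weakly normal \emph{relations} $\psi(x,g(y))$.

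In the group case your sketch has the same defect, since the representative $y$ in $x\in yH$ is not the original parameter variable. There it is repairable: apply Hrushovski--Pillay to $\varphi(G^{n+m})\subseteq G^{n+m}$ itself. For a left coset $cK$ of $K\leq G^{n+m}$, each fibre $\{x:(x,b)\in cK\}$ is empty or a left coset of $K_0=\{x:(x,1)\in K\}$. So $(x,y)\in cK$ is normal in $(x;y)$, over $c$ and $\mathrm{acl}^{\eq}(\emptyset)$.
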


\subsection{Strong EH}

Let a relation $E\subseteq X\times Y$ be given.
\begin{defn}\label{def: sEH}
We say that $E\subseteq X\times Y$ satisfies the\emph{ strong EH}
property, or sEH, if there is some real $\beta>0$ such that for any
probability measure $\mu$ on $X$ and $\nu$ on $Y$ concentrated
on finite subsets there are some $X_{0}\subseteq X,Y_{0}\subseteq Y$
such that $\mu\left(X_{0}\right),\nu\left(Y_{0}\right)>\beta$ and
either $X_{0}\times Y_{0}\subseteq E$ or $\left(X_{0}\times Y_{0}\right)\cap E=\emptyset$.
\end{defn}

\begin{defn}
We say that $E\subseteq X\times Y$ satisfies the \emph{density strong
EH} property, or dsEH, if for any real $\alpha>0$ there is some real
$\beta>0$ such that for any probability measure $\mu$ on $X$ and
$\nu$ on $Y$ concentrated on finite subsets the following are satisfied (here $\otimes$ denotes the product measure):

\begin{enumerate}
\item if $\mu\otimes\nu\left(E\right)>\alpha$, then there are some $X_{0}\subseteq X,Y_{0}\subseteq Y$
such that $\mu\left(X_{0}\right),\nu\left(Y_{0}\right)>\beta$ and
$X_{0}\times Y_{0}\subseteq E$;
\item if $\mu\otimes\nu\left(E\right)<1-\alpha$, then there are some $X_{0}\subseteq X,Y_{0}\subseteq Y$
such that $\mu\left(X_{0}\right),\nu\left(Y_{0}\right)>\beta$ and
$\left(X_{0}\times Y_{0}\right)\cap E=\emptyset$.
\end{enumerate}
\end{defn}

\begin{remark}
\label{rem:=000020dseh=000020closed=000020under=000020boolean=000020combs}

\begin{enumerate}
\item If $E$ satisfies dsEH, then it satisfies sEH.
\item The family of all relations $E\subseteq X\times Y$ satisfying sEH
is closed under boolean combinations.
\item If $E$ satisfies sEH, then it satisfies dsEH as well.
\item The family of all relations $E\subseteq X\times Y$ satisfying dsEH
is closed under boolean combinations.
\end{enumerate}
\end{remark}

\begin{proof}
(1) is obvious by taking $\beta$ given by dsEH with $\alpha=\frac{1}{3}$.

(2) is clear from the definition. Indeed, assume we are given $E \subseteq X \times Y $ a Boolean combination of $E_0$ and $E_1$, satisfying sEH with $\alpha_0$ and $\alpha_1$ respectively,  and finitely supported measures $\mu, \nu$. First find an $E_0$-homogeneous   pair of sets $X_0, Y_0$ with $\mu(X_0), \nu(Y_0) >  \alpha_0$. Define $\mu'$ by localizing $\mu$ on $X_0$, via $\mu'(Z) := \mu(Z \cap X_0)/\mu(X_0)$, and similarly for $\nu'$ by localizing $\nu$ on $Y_0$. Then there exist sets $X_1 \subseteq X, Y_1 \subseteq Y$ with $\mu'(X_1), \nu'(Y_1) > \alpha_1$ so that $E_1$ is homogeneous on $X_1 \times Y_1$. It follows that $E$ is homogeneous on $(X_0 \cap X_1) \times (Y_0 \cap Y_1)$ and $\mu(X_0 \cap X_1), \nu(Y_0 \cap Y_1) > \alpha := \alpha_0 \alpha_1$.

(3) is less obvious and follows from the proof of \cite[Proposition 4.1]{chernikov2018regularity}
(ignoring the definability clause).

(4) follows by combining (1), (2) and (3).
\end{proof}

A higher arity version of the strong EH property is considered in \cite{chernikov2026n} in connection to $n$-distality.

\subsection{Distality}

We recall some terminology and facts around local distality (i.e.~the distality assumption is only made for a particular formula as opposed for the whole theory).

\begin{defn}\label{def: distal cell decomp}\cite[Section 2]{chernikov2020cutting} (see also \cite[Section 2]{chernikov2024model} for a discussion) Let $X,Y$ be infinite sets, and $E\subseteq X\times Y$ a binary relation.
 \begin{enumerate}
 	\item Let $A\subseteq X$. For $b\in Y$, we say that $E_{b} = \left\{a \in X : (a,b) \in E \right\}$ \emph{crosses} $A$ if $E_{b}\cap A\neq\emptyset$ and $\left(X\setminus E_{b}\right)\cap A\neq\emptyset$.
 	\item A set $A\subseteq X$ is \emph{$E$-complete over $B\subseteq Y$} if $A$ is not crossed by any $E_{b}$ with $b\in B$.
 	\item A family $\mathcal{F}$ of subsets of $X$ is a \emph{cell decomposition for $E$ over $B\subseteq Y$} if $X\subseteq\bigcup\mathcal{F}$ and every $A\in\mathcal{F}$ is $E$-complete over $B$.
 	\item A \emph{cell decomposition for $E$} is a map $\mathcal{T}: B \mapsto \mathcal{T}(B)$ such that for each finite $B\subseteq Y$, $\mathcal{T}\left(B\right)$ is a cell decomposition for $E$ over $B$.
 	\item A cell decomposition $\mathcal{T}$ is \emph{distal} if there exist $k\in\mathbb{N}$ and a relation $D\subseteq X\times Y^{k}$ such that for all finite $B\subseteq Y$, 
 	$$\mathcal{T}\left(B\right)=\{D_{\left(b_{1},\ldots,b_{k}\right)}:b_{1},\ldots,b_{k}\in B\mbox{ and }D_{\left(b_{1},\ldots,b_{k}\right)}\mbox{ is }E\mbox{-complete over }B\}.$$
 	\item For $t \in \mathbb{R}_{>0}$, we say that a cell decomposition $\mathcal{T}$ has \emph{exponent $\leq t$} if there exists some $c \in \mathbb{R}_{>0}$ such that $|\mathcal{T}(B)| \leq c |B|^{t}$ for all finite sets $B \subseteq Y$.
 \end{enumerate}
 \end{defn}

 \begin{remark}
 	Note that if $\mathcal{T}$ is a distal cell decomposition, then it has exponent $\leq k$ for $k$ as in Definition \ref{def: distal cell decomp}(5).
 \end{remark}
 
 \begin{remark}\label{rem: dist cell decomp Bool comb}
 	The class of relations $E \subseteq X \times Y$ admitting distal cell decompositions is closed under Boolean combinations (by taking common refinements of the decompositions).
 \end{remark}
 
 Existence of ``strong honest definitions'' established in \cite{chernikov2015externally} shows that every relation definable in a distal structure admits a distal cell decomposition (of some exponent).

\begin{fact}\label{fac: def in dis impl distal cell decomp}(see \cite[Fact 2.9]{chernikov2020cutting})
Assume that the relation $E$ is definable in a distal structure $\mathcal{M}$. Then $E$ admits a distal cell decomposition (of some exponent $t \in \mathbb{N}$).
Moreover, in this case the relation $D$ in Definition \ref{def: distal cell decomp}(5) is also definable in $\mathcal{M}$ (and distality of $\mathcal{M}$ is equivalent to this property being satisfied for all formulas by \cite{chernikov2018regularity}).
\end{fact}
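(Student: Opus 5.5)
The plan is to take the relation $D$ in Definition \ref{def: distal cell decomp}(5) to be a \emph{strong honest definition} for the formula defining $E$, and then to check directly that it gives a distal cell decomposition. Write $E = \varphi(M;M)$ for a partitioned formula $\varphi(x;y)$, possibly with parameters, which we absorb into $y$ or into the language.

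The key input, which I would quote from \cite{chernikov2015externally}, is the following. If $\mathcal{M}$ is distal, then for every $\varphi(x;y)$ there is a formula $\theta(x;y_1,\ldots,y_k)$ with the following property. For every finite $B \subseteq M^{y}$ with $|B| \geq 2$ and every $a \in M^{x}$, there are $b_1,\ldots,b_k \in B$ such that $\mathcal{M} \models \theta(a;b_1,\ldots,b_k)$ and $\theta(x;b_1,\ldots,b_k) \vdash \tp_{\varphi}(a/B)$. Concretely, every $a'$ satisfying $\theta(a';\bar b)$ agrees with $a$ on $\varphi(x;b)$ for every $b \in B$.

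The proof of this goes in two steps. One first uses the indiscernible-sequence characterization of distality to get, in a monster model, that for any $a$ and any set $C$, $\tp_\varphi(a/C)$ is implied by a single instance $\theta(x;\bar c)$ with $\bar c$ from $C$. The delicate part is making $\theta$ \emph{uniform}, i.e. independent of $a$ and $C$. This is done by compactness: if no single $\theta$ works, one builds a counterexample type, and the distal characterization applied to suitable indiscernible sequences rules it out. One then restricts to finite $B$, where the statement is first order for each fixed size of $B$, so it transfers from the monster model down to $\mathcal{M}$. The degenerate case $|B| \leq 1$ is handled by allowing repeated parameters, or by adding finitely many dummy elements, which only changes constants. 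I expect this uniformization step to be the main obstacle. If I had to reprove it rather than cite it, I would follow the Chernikov--Simon argument via a $(p,q)$/compactness-style reduction on indiscernible sequences.

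Given $\theta$, set $D := \theta(M;M^{k}) \subseteq X \times Y^{k}$, which is definable in $\mathcal{M}$, and define $\mathcal{T}(B)$ exactly as in Definition \ref{def: distal cell decomp}(5).
\begin{itemize}
\item \emph{Completeness.} Every member of $\mathcal{T}(B)$ is $E$-complete over $B$ by definition, so only covering needs checking.
\item \emph{Covering.} Let $a \in X$. Choose $\bar b \in B^{k}$ as in the strong honest definition. Then $a \in D_{\bar b}$.
\item \emph{The chosen cell is complete.} For any $b \in B$ and any $a' \in D_{\bar b}$, we have $\varphi(a';b) \leftrightarrow \varphi(a;b)$. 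Hence no $E_b$ with $b \in B$ crosses $D_{\bar b}$, so $D_{\bar b}$ is $E$-complete over $B$ and therefore belongs to $\mathcal{T}(B)$.
\end{itemize}
Thus $X \subseteq \bigcup \mathcal{T}(B)$, and $\mathcal{T}$ is a distal cell decomposition with definable $D$. Its exponent is at most $k$ by the remark following Definition \ref{def: distal cell decomp}, since $|\mathcal{T}(B)| \leq |B|^{k}$.

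For the parenthetical converse, one observes that the property above, applied to all formulas, recovers strong honest definitions for finite sets. Compactness then gives them for arbitrary sets, and this is equivalent to distality by \cite{chernikov2018regularity, chernikov2015externally}. I would cite this direction rather than reprove it.
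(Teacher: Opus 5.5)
Your proposal is correct and takes the same route as the paper. The paper gives no argument beyond citing \cite[Fact 2.9]{chernikov2020cutting} and observing that the strong honest definitions of \cite{chernikov2015externally} yield distal cell decompositions, which is exactly your choice $D:=\theta(M;M^{k})$ followed by the covering and completeness check.

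Two side remarks, neither affecting your argument since you cite the result. First, the non-uniform claim in your sketch, that $\tp_\varphi(a/C)$ is implied by some $\theta(x;\bar c)$ with $\bar c$ from $C$, fails for infinite $C$: in DLO no formula over $\mathbb{Q}$ true of $\sqrt{2}$ implies its cut over $\mathbb{Q}$. So the compactness argument has to be set up for finite $C$, or with parameters from an elementary extension of the pair $(M,C)$. Second, the case $|B|\le 1$ cannot be repaired by repeating parameters, because the single cell $D_{(b,\ldots,b)}$ would have to cover $X$ without being crossed by $E_b$; it is simply excluded by convention, as the hypothesis $|C|\ge 2$ does in \cite{chernikov2015externally}.
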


The following definition abstracts from the notion of cuttings in incidence geometry (see the introduction of \cite{chernikov2020cutting} for an extended discussion).
\begin{defn}\label{def: r-cutting}
Let $X,Y$ be infinite sets, $E\subseteq X\times Y$.
We say that
  $E$ \emph{admits cuttings with exponent $t \in \mathbb{R}$} if there is some constant $c \in \mathbb{R}_{>0}$ satisfying the following. For any $B \subseteq Y$ with $\left| B \right| = n$ and any $r \in \mathbb{R}$ with $1 < r < n$ there are some sets $X_1, \ldots, X_s \subseteq X$ covering $X$ with $s \leq c r ^t$ and such that for each $i \in [s]$ there are at most $\frac{n}{r}$ elements $b \in B$ so that  $X_i$ is crossed by $E_b$.
  \end{defn}

In the case $r > n$ in Definition \ref{def: r-cutting}, sets in the covering are not crossed at all. And for $r$ varying between $1$ and $n$, $r$-cutting allows to control the trade-off between the number of cells in a covering and the number of times each cell is allowed to be crossed.

\begin{fact}(Distal cutting lemma, \cite[Theorem 3.2]{chernikov2020cutting})\label{fac: distal implies cutting}
Assume $E\subseteq X\times Y$ admits a distal cell decomposition $\mathcal{T}$ of exponent $ \leq t$. Then $E$ admits cuttings with exponent $ \leq t$. Moreover, every set in this cutting is an intersection of at most two cells in $\mathcal{T}$.
\end{fact}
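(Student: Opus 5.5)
The statement is the distal cutting lemma of \cite{chernikov2020cutting}. I would prove it by the Clarkson--Shor random sampling scheme, in the two-level form due to Chazelle--Friedman and Matou\v{s}ek, using distality to bound the combinatorial complexity of cells. Fix $B\subseteq Y$ with $|B|=n$ and $1<r<n$, and let $D\subseteq X\times Y^k$ witness that $\mathcal{T}$ is distal. For a potential cell $\Delta=D_{(b_1,\dots,b_k)}$ with $b_i\in B$, write $B_\Delta$ for its \emph{conflict set}, the set of $b\in B$ such that $E_b$ crosses $\Delta$. The key structural fact is the following. For any $S\subseteq B$, the cell $\Delta$ belongs to $\mathcal{T}(S)$ exactly when its defining tuple lies in $S$ and $B_\Delta\cap S=\emptyset$. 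This ``defined by $\le k$ elements, killed by any conflict'' property is all that the Clarkson--Shor analysis needs. Moreover, $|\mathcal{T}(S)|\le c|S|^t$ for every finite $S$.

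\textbf{First level.} Take a random sample $S\subseteq B$, including each element independently with probability $p=r/n$. Every $\Delta\in\mathcal{T}(S)$ is complete over $S$, and these cells cover $X$. For $\Delta\in\mathcal{T}(S)$ define its excess $\tau_\Delta=|B_\Delta|\cdot r/n$. The main step, and the one I expect to be the real obstacle, is the \emph{exponential decay lemma}: for every $s\ge 1$, the expected number of cells $\Delta\in\mathcal{T}(S)$ with $\tau_\Delta\ge s$ is $O(2^{-s} r^t)$.

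To prove it, I would compare $S$ with a sparser sample $S'$ taken with probability $p/s$. A fixed cell with $\tau_\Delta\ge s$ appears in $\mathcal{T}(S)$ with probability at most $p^{j}(1-p)^{|B_\Delta|}$, where $j\le k$ is the number of distinct defining elements. It appears in $\mathcal{T}(S')$ with probability at least $(p/s)^{j}(1-p/s)^{|B_\Delta|}$. The ratio between these two probabilities is at most $s^k e^{-\Omega(s)}$. Summing over cells and using $\mathbb{E}|\mathcal{T}(S')|=O((r/s)^t)$ then gives the decay bound. This uses only the fixed arity $k$ and the bound $|\mathcal{T}(S)|\le c|S|^t$; I will need some care with small $|S'|$ and with polynomial-versus-real exponent $t$. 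Summing the decay bound over dyadic $s$ gives
\[
\mathbb{E}\sum_{\Delta\in\mathcal{T}(S)}\tau_\Delta^{\,t+2}=O(r^t).
\]

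\textbf{Second level.} Each cell $\Delta$ with $\tau_\Delta>1$ is refined inside itself. Let $m=|B_\Delta|=\tau_\Delta n/r$. I would take a subsample $S_\Delta\subseteq B_\Delta$ of size $O(\tau_\Delta\log\tau_\Delta)$ chosen as a $(1/\tau_\Delta)$-net for the set system $\{B_\Delta\cap B_{\Delta'}\}$ of conflict sets of potential cells $\Delta'$. This system has polynomial shatter function, since each $\Delta'$ is indexed by a $k$-tuple, so the $\varepsilon$-net theorem applies. Hence every cell $\Delta'\in\mathcal{T}(S_\Delta)$ has fewer than $m/\tau_\Delta=n/r$ conflicts within $B_\Delta$.

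The sets $\Delta\cap\Delta'$ with $\Delta'\in\mathcal{T}(S_\Delta)$ cover $\Delta$. Each is an intersection of two cells of $\mathcal{T}$, as required in the ``moreover'' clause. Each is crossed by at most $n/r$ elements of $B$: an element outside $B_\Delta$ does not cross $\Delta$, so it cannot cross the subset $\Delta\cap\Delta'$.

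\textbf{Counting.} The number of pieces produced from $\Delta$ is $O((\tau_\Delta\log\tau_\Delta)^t)=O(\tau_\Delta^{t+1})$. By the displayed moment bound, the expected total number of pieces is $O(r^t)$, so some choice of $S$ achieves at most $c'r^t$ pieces. The constant $c'$ depends only on $c,k,t$, which gives cuttings with exponent $\le t$.
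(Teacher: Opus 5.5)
The paper gives no proof of this fact; it quotes it from \cite{chernikov2020cutting}, whose proof adapts Matou\v{s}ek's cutting lemma for lines. That proof has three parts: a weak cutting lemma by random sampling, the exponential decay lemma, and refinement of the heavy cells. Your outline has the same shape, and your first level and final counting are fine. The second level, however, rests on a claim that is false in general.

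\textbf{The gap.} You say $\{B_\Delta\cap B_{\Delta'}\}$ has polynomial shatter function because each $\Delta'$ is indexed by a $k$-tuple. This fails because $D$ is an arbitrary relation. The hypothesis bounds only the number of cells that are \emph{complete} over a set, not the VC-dimension of the crossing relation.
\begin{itemize}
\item Pad any $D$ with junk cells $J_c$, indexed by an extra coordinate $c$. At most $|B|$ of them lie in $\mathcal{T}(B)$, so you still have a distal cell decomposition of exponent $\le\max(t,1)$.
\item Their conflict sets can nevertheless be arbitrary. For $E$ equality on $\mathbb{N}$, a cell $J$ with $|J|\ge 2$ has conflict set $J\cap B$.
\item So your set system can have VC-dimension of order $\log|B|$.
\item Indexing by $B^k$ only bounds the number of sets by $n^k$. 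A union bound over them gives nets of size $O(\tau_\Delta\log n)$, hence $O(r^t\log^t n)$ pieces rather than $O(r^t)$.
\end{itemize}

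\textbf{The fix (Clarkson's argument).} You do not need a net for all potential cells. You only need to control the cells of $\mathcal{T}(S_\Delta)$, and these are defined by tuples from $S_\Delta$ itself.
\begin{itemize}
\item Put $m=|B_\Delta|$. Let $S_\Delta$ contain each element of $B_\Delta$ independently with probability $q=s/m$. For $\bar c\in B_\Delta^k$, let $K_{\bar c}=B_\Delta\cap B_{D_{\bar c}}$.
\item Suppose $\bar c$ has $j$ distinct entries and $|K_{\bar c}|\ge m/\tau_\Delta$. The event that $\bar c\subseteq S_\Delta$ and $S_\Delta\cap K_{\bar c}=\emptyset$ has probability at most $q^j e^{-s/\tau_\Delta}$. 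It is impossible if $\bar c$ meets $K_{\bar c}$.
\item There are at most $j^k m^j$ tuples with $j$ distinct entries. So the probability that some cell of $\mathcal{T}(S_\Delta)$ has at least $n/r$ conflicts is at most $\sum_{j\le k} j^k s^j e^{-s/\tau_\Delta}$.
\item This is $<1/2$ once $s=C_k\tau_\Delta\log(2\tau_\Delta)$.
\item Add Markov's inequality for $|S_\Delta|$. This gives the $O((\tau_\Delta\log\tau_\Delta)^t)$ refinement with no VC assumption.
\end{itemize}
This is the same mechanism as your first level, and it is the weak cutting lemma of the cited proof.

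\textbf{A smaller point in your decay lemma.} A cell may have several defining tuples, so the bound $p^j(1-p)^{|B_\Delta|}$ is not well defined as stated. Summing over tuples instead of cells would lose the bound $|\mathcal{T}(S')|\le c|S'|^t$. Instead:
\begin{itemize}
\item Write $\Pr_p[\Delta\in\mathcal{T}(S)]=\Pr_p[U_\Delta]\,(1-p)^{|B_\Delta|}$. Here $U_\Delta$ is the event that $S\setminus B_\Delta$ contains the entries of some defining tuple of $\Delta$.
\item Obtain $S'$ by keeping each element of $S$ with probability $1/s$. Then $\Pr_{p/s}[U_\Delta]\ge s^{-k}\Pr_p[U_\Delta]$.
\item This yields your ratio bound $s^k e^{-(1-1/s)\tau_\Delta}$.
\end{itemize}
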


\begin{prop}\label{prop: cutting implies sEH}
Assume $E \subseteq X \times Y$ admits cuttings with exponent $t$, for some $t \in \mathbb{N}$ (by Fact \ref{fac: distal implies cutting} this holds if $E$ admits a distal cell decomposition). Then  $E$ satisfies strong EH.
\end{prop}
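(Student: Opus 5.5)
We follow the standard argument from \cite{chernikov2020cutting}: use a cutting to find a cell that is heavy for $\mu$ and crossed by few elements of the support of $\nu$, then pass to a subcell on which $E$ is homogeneous against a large part of $\nu$. By Remark \ref{rem:=000020dseh=000020closed=000020under=000020boolean=000020combs}(1) and (3) it suffices to prove the plain sEH property.

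\textbf{Reduction to a uniform measure.} Fix finitely supported probability measures $\mu$ on $X$ and $\nu$ on $Y$. We first replace $\nu$ by the uniform measure on a finite multiset $B$. Choose $n$ large and take $\lfloor n\nu(b)\rfloor$ copies of each point $b$ of the support of $\nu$, plus a harmless remainder. Definition \ref{def: r-cutting} is stated for sets, but it applies verbatim to multisets: crossings are counted with multiplicity, and we apply it to the underlying set while bounding crossed mass by the $\nu$-mass of the crossing points. Alternatively we can avoid multisets altogether. Note that the cutting's guarantee concerns the number of crossing points, and we want crossing mass. To get it, we may apply the cutting to the set of atoms of $\nu$ after splitting heavy atoms into copies of equal weight. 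Duplicated points all have the same fiber $E_b$, so they cross the same cells, and the bookkeeping is harmless. Thus we may assume $\nu$ is uniform on a multiset $B$ with $|B|=n$.

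\textbf{Finding a heavy, rarely crossed cell.} Fix a constant $r>1$, to be chosen depending only on $c$ and $t$. Apply the cutting to obtain $X_1,\dots,X_s$ covering $X$, with $s\le cr^t$ and each $X_i$ crossed by at most $n/r$ elements of $B$. Since the $X_i$ cover $X$, some $X_i$ satisfies $\mu(X_i)\ge 1/s\ge 1/(cr^t)$. Let $B'\subseteq B$ be the set of $b$ that do not cross $X_i$; then $\nu(B')\ge 1-1/r\ge 1/2$ once $r\ge 2$. Every $b\in B'$ has either $X_i\subseteq E_b$ or $X_i\cap E_b=\emptyset$. Hence one of these two alternatives holds for a set $Y_0\subseteq B'$ with $\nu(Y_0)\ge 1/4$.

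\textbf{Conclusion.} Take $X_0=X_i$. Then $X_0\times Y_0\subseteq E$ or $(X_0\times Y_0)\cap E=\emptyset$, with $\mu(X_0)\ge 1/(2c)\cdot 2^{-t}$ (taking $r=2$) and $\nu(Y_0)\ge 1/4$. Setting $\beta$ slightly below $\min(2^{-t}/c,\,1/4)$ gives the required strict inequalities. A fixed $r=2$ is a legitimate choice provided $n>2$, which the duplication step ensures. The boundary cases of small $n$ are handled by that same duplication.

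\textbf{Main obstacle.} The only real subtlety is the passage from arbitrary finitely supported $\nu$ to a uniform set, since cuttings are stated for sets with crossing counts. I would handle it as described above: use multiplicities and approximate the weights rationally, with errors absorbed by shrinking $\beta$. Equivalently, since crossing depends only on the fiber, one may replace $B$ by a set of distinct formal copies inside a larger $Y$. If $Y$ lacks such copies, we argue directly with weighted counting, bounding the crossed mass by noting that the cutting proof itself works for weighted sets. Failing that, we approximate $\nu$ by a uniform measure on a subset up to an additive error less than $1/8$. This costs only constants, and we apply the argument to that approximation.
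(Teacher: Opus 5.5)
Your core argument is the same as the paper's. You take $r=2$, pigeonhole a cell with $\mu(X_{i})\geq 1/(c2^t)$, observe that at least half of $B$ does not cross it, and take the majority alternative to get $\nu(Y_0)\geq 1/4$. (The $\frac{1}{2c}2^{-t}$ in your conclusion is a slip; your choice of $\beta$ needs the $1/(c2^t)$ you derived earlier.)

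The gap is the reduction from an arbitrary finitely supported $\nu$ to a uniform one. You flag it as the main obstacle but do not resolve it, and each justification you offer fails as stated:
\begin{enumerate}
\item Applying Definition~\ref{def: r-cutting} to the support of $\nu$ bounds the \emph{number} of crossing atoms, not their $\nu$-mass. Nothing prevents every cell from being crossed by the heaviest half of the atoms, which may carry mass $1-\varepsilon$.
\item Copies of a point are not distinct elements of $Y$, so ``counting crossings with multiplicity'' is not an instance of the hypothesis. Passing to formal copies means passing to the new relation $E'\subseteq X\times(Y\times\mathbb{N})$, $E'(x,(y,j)):\iff E(x,y)$. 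Cuttings for $E'$ amount to a weighted cutting statement for $E$, which you have not derived from the set version.
\item ``The cutting proof works for weighted sets'' presupposes a proof of the cutting property, i.e.\ extra structure such as a distal cell decomposition, which you never invoke.
\item A uniform measure on a subset of the support cannot approximate $\nu$ to within $1/8$ in general: take an atom of mass $1/2$ plus many tiny atoms. Heavy atoms can be handled directly, by taking $Y_0=\{b\}$ and $X_0$ the $\mu$-larger of $E_b$ and $X\setminus E_b$. But this does nothing for measures whose mass is spread over many different weight scales.
\end{enumerate}

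The paper closes this step through distal cell decompositions; see its parenthetical remark at the end of the proof. After making the weights rational, it duplicates points, and duplication preserves a distal cell decomposition. If $D\subseteq X\times Y^k$ witnesses one for $E$ with $|\mathcal{T}(B)|\leq c|B|^t$, then $D'_{((b_1,j_1),\ldots,(b_k,j_k))}:=D_{(b_1,\ldots,b_k)}$ witnesses one for $E'$ with the same bound. This holds because $(b,j)$ crosses a set if and only if $b$ does, and a finite $B'\subseteq Y\times\mathbb{N}$ projects onto a set of size at most $|B'|$. The distal cutting lemma (Fact~\ref{fac: distal implies cutting}) then gives cuttings for $E'$, i.e.\ for multisets, and your uniform-measure argument applies verbatim.

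So your proof is complete once you supply this input, or if you assume cuttings for multisets from the outset. Without it, the passage from uniform measures on sets to arbitrary $\nu$ is unsupported.
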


\begin{proof}

Let $c,t$ be given by Definition \ref{def: r-cutting} for $E$ (we will only use it with $r=2$). We show that $E$ satisfies sEH with any $\delta < \min \{\frac{1}{c 2^t}, \frac{1}{4}\}$.

Let $\mu, \nu$ be finitely supported probability measures on $X,Y$ respectively, and assume first that $\nu$ is the uniform measure on a finite set $B \subseteq Y$ of size $n$, and without loss of generality $n \geq 3$. By assumption with $r=2$, there exist some sets $X_1, \ldots, X_s \subseteq X$ covering $X$ so that $s \leq c 2^t$, and for each $i \in [s]$ there are at most $\frac{n}{2}$ element $b \in B$ so that $X_i$ is crossed by $E_b$. In particular, $\mu(X_{i^{\ast}}) \geq \frac{1}{c 2^t}$ for at least one $i^{\ast} \in [s]$. And there is a set $B_0 \subseteq B$ with $|B_0| \geq \frac{1}{2}(n - \frac{n}{2}) \geq n/4$ so that either $X_i \subseteq E_b$ for all $b \in B_0$, or $X_i \subseteq E \setminus E_b$ for all $b \in B_0$. In either case $(X_{i^{\ast}}, B_0)$ is an $E$-homogeneous pair with $\mu(X_{i^{\ast}}) \geq \frac{1}{c 2^t}$ and $\nu(B_0) \geq |B_0|/|B| \geq \frac{1}{4}$. For a general finitely supported measure $\nu$, we may first assume that the weights of the points are rational, replacing the weights one by one by a rational number very close to the original weight (with respect to our fixed $\delta$ and the size of the support), and then reduce to the uniform measure by duplicating each point in the support according to its weight (this clearly does not affect the existence of a distal cell decomposition with given size).
\end{proof}

In particular, every reduct of a distal structure admits (not necessarily definable) distal cell decompositions, hence satisfies strong EH. We note that the original proof of the sEH in distal theories also used that the formula defining the distal cell decomposition is itself NIP, but it is not necessary.

\subsection{Strong Erd\H{o}s-Hajnal holds in one-based theories}
In the rest of the section we prove the following theorem.
\begin{theorem}\label{thm: sEH in 1based}
Let $T$ be stable, $1$-based. Then every definable relation $\varphi(x,y)$, with $x,y$ arbitrary finite tuples, admits a (not necessarily definable) distal cell decomposition and satisfies dsEH (hence also sEH).
\end{theorem}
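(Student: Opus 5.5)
By Fact \ref{fac: 1-based iff bool comb weakly normal}, $\varphi(x;y)$ is equivalent to a Boolean combination of weakly normal formulas $\psi_1(x;y),\ldots,\psi_m(x;y)$. Closure under Boolean combinations is available for distal cell decompositions (Remark \ref{rem: dist cell decomp Bool comb}) and for dsEH (Remark \ref{rem:=000020dseh=000020closed=000020under=000020boolean=000020combs}(4)). So it suffices to treat a single $k$-weakly normal relation $E = \psi(M;M)\subseteq X\times Y$. For such $E$ we will build a distal cell decomposition. Fact \ref{fac: distal implies cutting} and Proposition \ref{prop: cutting implies sEH} then give sEH, and Remark \ref{rem:=000020dseh=000020closed=000020under=000020boolean=000020combs}(3) upgrades it to dsEH. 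One subtlety in the reduction is that the $\psi_i$ may have parameters or different variable partitions. We may take all of them in the same partition $(x;y)$, possibly after adding parameters to $y$. A cell decomposition for a relation with parameters fixed is still a cell decomposition, so the reduction goes through.

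The key combinatorial point is the following consequence of $k$-weak normality. Identify $b,b'$ when $E_b=E_{b'}$; only the distinct fibers matter. Then any point $a\in X$ lies in at most $k-1$ distinct fibers $E_b$. Fix a finite $B\subseteq Y$. For $a\in X$, let $S_a=\{E_b : b\in B,\ a\in E_b\}$, a set of at most $k-1$ distinct fibers. The $E$-type of $a$ over $B$ is determined by $S_a$: $a\in E_{b}$ for $b\in B$ iff $E_b\in S_a$. We therefore define the cells, for $b_1,\ldots,b_j\in B$ with $j\le k-1$, as
\[D_{(b_1,\ldots,b_{j})}=\bigcap_{i\le j}E_{b_i},\]
together with the cell $D_\emptyset=X$ if $j=0$.

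Completeness of a cell over $B$ means that no $E_b$ with $b\in B$ crosses it. To express this by a fixed relation $D$ of arity $k-1$ (pad the tuple by repeating entries), we cannot simply take $\bigcap_{i\le j} E_{b_i}$, since it contains points lying in further fibers. Instead the relation $D$ must depend only on the $b_i$ and not on all of $B$. The trick is that Definition \ref{def: distal cell decomp}(5) only keeps those $D_{\bar b}$ that are $E$-complete over $B$, and we must check that these still cover $X$. Take $a\in X$ and let $E_{b_1},\ldots,E_{b_j}$ be the distinct members of $S_a$, with $j\le k-1$. The set $\bigcap_{i}E_{b_i}$ contains $a$ but may be crossed by some other $E_b$, $b\in B$. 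So we refine the cells: define $D_{(b_1,\ldots,b_{k-1})}$ as the set of $a'$ lying in $\bigcap_i E_{b_i}$ and in no $E_{b'}$ with $E_{b'}\ne E_{b_i}$ for all $i$ and $\bigcap_i E_{b_i}\cap E_{b'}\neq\emptyset$. This refers to $b'$ ranging over all of $Y$, which is fine because $D$ need not be definable in $T$. The main obstacle is exactly this step. Given $a'\in D_{\bar b}$ and $b\in B$, suppose $a'\notin E_b$. We need to show no point of the cell lies in $E_b$. Suppose some $a''$ in the cell lies in $E_b$. Then $E_b$ meets $\bigcap_i E_{b_i}$, and $E_b$ is distinct from the $E_{b_i}$ (since $a'\notin E_b$ while $a'\in E_{b_i}$), so $a''$ is excluded by definition, a contradiction. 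Hence the cell is complete over $B$ (indeed over all of $Y$).

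For covering, each $a$ must lie in the cell given by the maximal family of fibers containing it. Take $\{E_{b_i}\}$ to be the set of all distinct fibers through $a$, which has size at most $k-1$; some of these may have parameters outside $B$. So we should instead fix the definition as $a'\in D_{\bar b}$ iff the set of fibers (over all of $Y$) containing $a'$ is exactly $\{E_{b_i}\}$. I will first check carefully whether cells built from parameters $b_i\in B$ suffice, possibly by allowing $\bar b$ to encode the fibers through $a$ among those from $B$ only. In that case I would take $D_{\bar b}$ to be the atom of the Boolean algebra generated by the $E_{b'}$, $b'\in Y$, meeting $\bigcap E_{b_i}$. If this fails, the fallback is to work in an elementary extension and use $k$-weak normality to bound intersection patterns, so that each cell is the set of points whose fibers from $B$ are exactly $\{E_{b_i}\}$. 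That set is complete over $B$ by construction, and it is uniformly given by the $b_i$ once we quantify over "fibers from a $B$ meeting the intersection". Given the cells, the exponent is at most $k-1$, and the remaining steps are the citations already listed.
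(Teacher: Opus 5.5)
Your reduction to a single $k$-weakly normal relation, and the chain from a distal cell decomposition to cuttings, then sEH, then dsEH, are fine and match the paper. The gap is in the one substantive step: you never actually produce a distal cell decomposition for a weakly normal $E$.

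\textbf{Your refined cells do not cover $X$.} These are the points whose set of fibers over all of $Y$ is exactly $\{E_{b_1},\dots,E_{b_j}\}$. They are complete, but with $\bar b$ ranging over $B$ they miss every point that lies in some fiber $E_b$ with $b$ not fiber-equivalent to an element of $B$. The simplest instance is equality on an infinite set, which is $2$-weakly normal. There your cells are the singletons $\{b\}$, $b\in B$, and $D_\emptyset=\emptyset$, so $X\setminus B$ is uncovered.

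\textbf{Your fallback is not distal.} The cell ``points whose fibers from $B$ are exactly $\{E_{b_i}\}$'' is complete, but it depends on all of $B$, not only on $\bar b$. So it is not of the form required in Definition~\ref{def: distal cell decomp}(5); for equality it is the cell $X\setminus B$. It is just the partition of $X$ into $E$-types over $B$. That partition has polynomial size for every NIP relation, so it carries no information toward sEH. For example, the stable Ramanujan-graph relation at the end of the paper has polynomially many types over $B$ but fails sEH. The distal cutting lemma genuinely needs each cell to be determined by boundedly many elements of $B$, and passing to an elementary extension does not address this.

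\textbf{The missing idea.} You need a way to encode the \emph{negative} information, namely $a\notin E_b$ for all the remaining $b\in B$, using a bounded number of parameters from $B$. The paper does this as follows:
\begin{enumerate}
\item Fix an arbitrary linear order on $Y$ in which every fiber-equivalence class is a closed interval (convex, with a minimum and a maximum).
\item By $k$-weak normality, $E(a,Y)$ is then a union of at most $k$ closed intervals.
\item The $E$-type of $a$ over $B$ is determined by where the endpoints $f^\ell_t(a),f^r_t(a)$ sit relative to $B$.
\item The cell of $a$ is cut out by the nearest elements of $B$ on either side of each endpoint, giving $D\subseteq X\times Y^{4k}$.
\end{enumerate}
For equality, this just yields the singletons $\{b\}$ together with the open intervals between consecutive points of $B$.

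Alternatively, the paper's second proof avoids distal decompositions entirely. After quotienting by fiber-equivalence, a $k$-weakly normal relation is $k$-bounded. The universal $k$-bounded assignment is realized by point--hyperplane incidences for hyperplanes in general position in $\mathbb{R}^k$, so dsEH follows from the semialgebraic case.
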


We give two proofs of this. In the first one, we show that given a weakly normal formula we can expand $M$ by a  linear order to get a distal cell decomposition for it, hence the result follows by Proposition \ref{prop: cutting implies sEH}.  In the second proof, we show that any $k$-weakly normal formula can be modeled by the intersections of affine hyperplanes in $\mathbb{R}^k$, i.e.~by a semialgebraic relation, hence satisfies sEH already by \cite{alon2005crossing}.

\subsection{First proof}

\begin{prop}\label{prop: weak norm implies strong honest def}
	Every weakly normal formula admits a (not necessarily definable) distal cell decomposition. Hence also every Boolean combination of weakly normal formulas admits a distal cell decomposition (by Remark \ref{rem: dist cell decomp Bool comb}).
\end{prop}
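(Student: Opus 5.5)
The plan is to reduce to a relation definable in a distal structure, namely a dense linear order, and then pull back the distal cell decomposition provided by Fact \ref{fac: def in dis impl distal cell decomp}. Let $\varphi(x,y)$ be $k$-weakly normal, with $E = \varphi(M;M) \subseteq X \times Y$ for $X = M^x$, $Y = M^y$. Let $\mathcal{F} = \{\varphi(M,b) : b \in Y\}$ be the set of \emph{distinct} fibers.

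The key combinatorial observation is that $k$-weak normality says any $k$ pairwise distinct members of $\mathcal{F}$ have empty intersection. Hence every $a \in X$ lies in at most $k-1$ members of $\mathcal{F}$; write $F(a) \subseteq \mathcal{F}$ for this set, so $|F(a)| \leq k-1$.

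Next we encode the relation linearly.
\begin{itemize}
\item Fix a dense linear order $(R,<)$ without endpoints, large enough to admit an injection $\iota : \mathcal{F} \to R$. For instance, take a sufficiently saturated model of DLO. Also fix an element $* \in R$ outside the image of $\iota$.
\item Define $f : X \to R^{k-1}$ by listing $\iota(F(a))$ in increasing order and padding with $*$ up to length $k-1$.
\item Define $g : Y \to R$ by $g(b) = \iota(\varphi(M,b))$.
\end{itemize}
Then for all $a \in X$ and $b \in Y$,
\[ (a,b) \in E \iff \bigvee_{l=1}^{k-1} f(a)_l = g(b), \]
because $g(b) \neq *$ and $\iota$ is injective. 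So $E = (f \times g)^{-1}(E')$, where $E' \subseteq R^{k-1} \times R$ is the relation defined by $\bigvee_l x_l = y$. This is definable, even quantifier-free, in the distal (o-minimal) structure $(R,<)$. By Fact \ref{fac: def in dis impl distal cell decomp}, $E'$ admits a distal cell decomposition given by some $D' \subseteq R^{k-1} \times R^{m}$.

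Finally we pull back. Set $D = \{(a, b_1, \ldots, b_m) : (f(a), g(b_1), \ldots, g(b_m)) \in D'\}$, so that $D_{(b_1,\ldots,b_m)} = f^{-1}(D'_{(g(b_1),\ldots,g(b_m))})$. We then check the two defining properties for a finite $B \subseteq Y$.
\begin{itemize}
\item \emph{Completeness.} Crossing is preserved under preimages: $E_b = f^{-1}(E'_{g(b)})$ for each $b$. So if $D'_{g(\bar b)}$ is $E'$-complete over $g(B)$, then $D_{\bar b}$ is $E$-complete over $B$. Conversely, if $D_{\bar b}$ is $E$-complete over $B$, we keep it as a cell.
\item \emph{Covering.} For each $a \in X$, the point $f(a)$ lies in some cell $D'_{g(\bar b)}$ that is $E'$-complete over $g(B)$. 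Its parameters lie in $g(B)$, so they can be chosen as $g(\bar b)$ with $\bar b \in B^m$, and then $a \in D_{\bar b}$. Hence the $E$-complete sets $D_{\bar b}$ with $\bar b \in B^m$ cover $X$.
\end{itemize}
Thus $\mathcal{T}(B) = \{D_{\bar b} : \bar b \in B^m,\ D_{\bar b} \text{ is } E\text{-complete over } B\}$ is a distal cell decomposition for $E$. The second sentence of the proposition then follows from Remark \ref{rem: dist cell decomp Bool comb}.

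The main obstacle is the encoding step. We must represent each point by a bounded-length tuple of fiber labels so that incidence becomes a relation definable in a distal structure. This is exactly where the bound $|F(a)| \leq k-1$ from weak normality is used. The sorting by $<$ and the padding symbol $*$ make $f$ well defined, which is the sense in which we ``expand by a linear order''.
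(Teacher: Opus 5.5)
Your proof is correct. It shares the paper's core idea: linearly order the $E_\varphi$-classes and use that each point lies in at most $k-1$ of them. The execution, however, is different.

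\textbf{The paper's route.} It works inside $M^y$. It chooses a linear order on $M^y$ in which every $E_\varphi$-class is convex with a least and a greatest element, so each $\varphi(a,M^y)$ is a union of at most $k$ closed intervals with endpoints $f^\ell_t(a), f^r_t(a)$. It then writes down an explicit $D$ with $4k$ parameters from $B$ (the elements of $B$ nearest to these endpoints on either side) and checks by hand that it pins down the $\varphi$-type over $B$.

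\textbf{Your route.} You collapse each fiber to a single point $g(b)$ of an auxiliary dense order. You encode $a$ by the sorted tuple $f(a)$ of labels, and show that $E$ is the pullback under $f\times g$ of the quantifier-free relation $\bigvee_l x_l=y$ in $(R,<)$. You then combine Fact \ref{fac: def in dis impl distal cell decomp} with a general pullback principle. This principle is correctly verified: completeness transfers because $E_b=f^{-1}(E'_{g(b)})$, and covering transfers because every parameter tuple from $g(B)$ lifts to one from $B$.

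\textbf{What each route buys.} The paper's argument is elementary and self-contained, and it gives an explicit parameter count, hence an explicit exponent. Yours is more modular: the pullback lemma applies to any relation obtained by pulling back a relation definable in a distal structure. Collapsing classes to points also removes the need for left and right endpoints. It further makes immediate that $E$ is a Boolean combination of basic relations, since $f(a)_l=g(b)$ holds iff neither $f(a)_l<g(b)$ nor $g(b)<f(a)_l$; the paper establishes this separately later, using the endpoint functions.

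The cost is that you invoke strong honest definitions in distal theories, which is much heavier than this relation needs. For $\bigvee_l x_l=y$ you could write $D'$ by hand: for each $l$, either $x_l=c_l$ or $c_l<x_l<d_l$ with $c_l,d_l$ consecutive in $B'$. That would recover essentially the paper's construction with about $2(k-1)$ parameters.
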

\begin{proof}
	Assume $\varphi(x,y)$ is $k$-weakly normal. Consider the definable equivalence relation $E_{\varphi}(y,y') :\iff  \forall x (\varphi(x,y) \leftrightarrow \varphi(x,y'))$. Choose any linear order $\leq_{\varphi}$ on $M^y$ so that each class of $E_{\varphi}$ is convex, with a minimal and a maximal element (which may be equal). As $\varphi(x,y)$ is $k$-weakly normal, for every $a \in M^x$ the set $\varphi(a,M^y)$ is given by the union of at most $k$ many classes of $E_{\varphi}$, hence by the union of at most $k$ many closed intervals in $(M^y, <_{\varphi})$. For $t \in [k]$, consider the functions $f_t^{\ell}, f_t^{r}: M^x \to M^y$ so that $f_t^{\ell}(a), f_t^{r}(a)$ are the left/right endpoints of the $t$'s interval of $\varphi(a,M^y)$ (or of the maximal $t'<t$ interval if there are fewer than $t$ intervals; if $\varphi(a,M^y)$ is empty, let these be an arbitrary fixed point). 
	
		Now, given an arbitrary finite set $B \subseteq M^y$ and $a \in M^x$, for each $t \in [k]$ we let $b^{\ell}_t$ be the minimal element of $B$ with $f_t^{\ell}(a) \leq  b^{\ell}_t$ and $b^{r}_t$ the maximal element of $B$ with $b^{r}_t \leq f_t^{r}(a)$ (if they exist; if not pick an arbitrary point in $B$). We also let $c^{\ell}_t$ be the maximal element of $B$ with $c^{\ell}_t < f_t^{\ell}(a)$, and $c^{r}_t$ the minimal element of $B$ with $f_t^{r}(a) <  c^{r}_t$.
	Then the formula
	\begin{gather*}
		D \left( x; (b^\ell_t)_{t \in [k]}, (b^r_t)_{t \in [k]},  (c^\ell_t)_{t \in [k]}, (c^r_t)_{t \in [k]} \right) := \\
		\bigwedge_{t = 1}^{k} \left( f^{\ell}_t(x)  \leq  b^{\ell}_t \land f^{r}_t(x) \geq b^r_t\right) \land \left( c^{\ell}_t < f^{\ell}_t(x) \land f^{r}_t(x) <  c^{r}_t\right)
	\end{gather*}
	is satisfied by $a$, and for any other $a' \in M^x$ satisfying it, $\tp_{ \varphi}(a'/B) = \tp_{ \varphi} (a/B)$. Hence $D \left( x; (y^\ell_t)_{t \in [k]}, (y^r_t)_{t \in [k]},  (z^\ell_t)_{t \in [k]}, (z^r_t)_{t \in [k]} \right)$ gives a distal cell decomposition for $\varphi$.
\end{proof}

It follows that every weakly normal formula satisfies sEH by Proposition \ref{fac: distal implies cutting}, hence every formula in a one-based theory satisfies sEH by Remark \ref{rem:=000020dseh=000020closed=000020under=000020boolean=000020combs}.

\subsection{Second proof}

Let now $E\subseteq X\times Y$ be given. We associate each element
$a\in X$ with a function $f_{a}:Y\to\left\{ 0,1\right\} $ defined
by 
\[
f_{a}\left(b\right)=\begin{cases}
1 & \mbox{if }E\left(a,b\right)\mbox{ holds}\mbox{,}\\
0 & \mbox{if }\neg E\left(a,b\right)\mbox{ holds.}
\end{cases}
\]

Thus for every finite set $B\subseteq Y$ we have a set of functions
$\Omega_{E}\subseteq\left\{ 0,1\right\} ^{B}$ defined by 
\[
\Omega_{E}\left(B\right)=\left\{ f_{a}\restriction B:a\in X\right\} \mbox{.}
\]

More generally, let $\Omega$ be any assignment that assigns to
any finite $B\subseteq Y$ a set of functions $\Omega\left(B\right)$
from $B$ to $\left\{ 0,1\right\} $. Let $B\subseteq Y$ be finite.
\begin{defn}
Given a probability measure $\nu$ on $B$ and a probability measure
$\mu$ on $\Omega\left(B\right)$ we define the \emph{density of $\Omega$
on $B$} as 
\[
d_{\nu,\mu}^{\Omega}\left(B\right)=\sum_{f\in\Omega\left(B\right)}\mu\left(f\right)\left(\sum_{b\in B}f\left(b\right)\nu\left(b\right)\right)\mbox{.}
\]
\end{defn}

\begin{defn}
We say that an assignment $\Omega:B\mapsto\Omega\left(B\right)$
has the \emph{density strong EH, }or\emph{ dsEH, }if for every real
$\alpha>0$ there is some real $\beta>0$ such that for all finite
$B\subseteq Y$, a probability measure $\nu$ on $B$ and a probability
measure $\mu$ on $\Omega\left(B\right)$ the following are satisfied:

\begin{itemize}
\item if $d_{\nu,\mu}^{\Omega}\left(B\right)>\alpha$, then there are
$B_{0}\subseteq B$ and $A_{0}\subseteq\Omega\left(B\right)$ such
that $\mu\left(A_{0}\right),\nu\left(B_{0}\right)>\beta$ and $f\left(b\right)=1$
for all $f\in A_{0}$ and $b\in B_{0}$.
\item if $d_{\nu,\mu}^{\Omega}\left(B\right)<1-\alpha$, then there are
$B_{0}\subseteq B$ and $A_{0}\subseteq\Omega\left(B\right)$ such
that $\mu\left(A_{0}\right),\nu\left(B_{0}\right)>\beta$ and $f\left(b\right)=0$
for all $f\in A_{0}$ and $b\in B_{0}$.
\end{itemize}
\end{defn}

\begin{lemma}
\label{lem:=000020DSEH=000020transfer}A relation $E\subseteq X\times Y$
satisfies dsEH if and only if $\Omega_{E}$ does.
\end{lemma}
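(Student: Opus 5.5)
The plan is to translate measures back and forth between $X$ and $\Omega_E(B)$ along the map $\pi_B: X \to \Omega_E(B)$, $a \mapsto f_a\restriction B$, and to check that densities and homogeneous pairs correspond. Throughout, $\alpha$ and $\beta$ are kept the same in both directions.

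For the forward direction, assume $E$ satisfies dsEH. Fix a finite $B$, a probability measure $\nu$ on $B$ and a probability measure $\mu$ on $\Omega_E(B)$. For each $f \in \Omega_E(B)$ choose some $a_f \in X$ with $f_{a_f}\restriction B = f$. Let $\mu'$ be the finitely supported measure on $X$ giving weight $\mu(f)$ to $a_f$, and regard $\nu$ as a finitely supported measure on $Y$. Then
\[
\mu'\otimes\nu(E)=\sum_f \mu(f)\sum_{b\in B} f(b)\nu(b)=d^{\Omega_E}_{\nu,\mu}(B).
\]
Suppose this density exceeds $\alpha$. Then dsEH for $E$ gives $X_0, Y_0$ with $\mu'(X_0), \nu(Y_0) > \beta$ and $X_0\times Y_0\subseteq E$. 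Set $B_0 = Y_0\cap B$, so $\nu(B_0)=\nu(Y_0)$, and set $A_0=\{f : a_f\in X_0\}$, so $\mu(A_0)=\mu'(X_0)$. Every $f\in A_0$ satisfies $f(b)=1$ on $B_0$. The case where the density is below $1-\alpha$ is symmetric.

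For the converse, assume $\Omega_E$ satisfies dsEH. Given finitely supported $\mu$ on $X$ and $\nu$ on $Y$, let $B=\supp(\nu)$, which is finite, and push $\mu$ forward: $\mu^*(f)=\mu(\{a : f_a\restriction B=f\})$. The same computation gives $\mu\otimes\nu(E)=d^{\Omega_E}_{\nu,\mu^*}(B)$. From $A_0, B_0$ obtained for $\Omega_E$, take $X_0=\pi_B^{-1}(A_0)$ and $Y_0=B_0$. Then $\mu(X_0)=\mu^*(A_0)>\beta$ and $X_0\times Y_0$ is homogeneous for $E$.

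No step here is a real obstacle. The only points requiring care are that $\Omega_E(B)$ is finite, so $\mu^*$ and the lifted measure $\mu'$ are finitely supported, and that $\nu$ lives on $B$ in one direction and on its support in the other. Both identifications preserve all the relevant masses exactly.
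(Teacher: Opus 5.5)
Your proof is correct and follows the paper's own argument. In one direction you push $\mu$ forward along $a\mapsto f_a\restriction B$ with $B=\supp(\nu)$, in the other you lift via chosen representatives $a_f$, and in both you use that $\mu\otimes\nu(E)$ equals the density $d^{\Omega_E}_{\nu,\mu}(B)$. You also spell out the transfer of homogeneous pairs, which the paper leaves as ``straightforward from the definitions''.
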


\begin{proof}
Let $B$ be the support of $\nu$. Given a finitely supported measure
$\mu$ on $X$, we associate to it a measure $\mu'$ on $\Omega_{E}\left(B\right)$
by taking $\mu'\left(f\right):=\sum\left\{ \mu\left(a\right):a\in X,f_{a}\restriction B=f\right\} $.
And conversely, given a probability measure $\mu'$ on $\Omega_{E}\left(B\right)$,
for each $f\in\Omega_{E}\left(B\right)$ choose some $a_{f}\in X$
such that $f=f_{a_{f}}\restriction B$, and define a probability measure
$\mu$ on $A=\left\{ a_{f}:f\in\Omega_{E}\left(B\right)\right\} \subseteq X$
by $\mu\left(a_{f}\right):=\mu'\left(f\right)$. Then $d_{\nu,\mu'}^{\Omega}\left(B\right)=\mu\otimes\nu\left(E\right)$,
and the rest is straightforward from the definitions.
\end{proof}
\begin{lemma}
\label{lem:=000020DSEH=000020for=000020subassignments}Let $\Omega$
and $\Omega'$ be two assignments such that $\Omega\left(B\right)\subseteq\Omega'\left(B\right)$
for all finite $B\subseteq Y$. If $\Omega'$ satisfies dsEH, then
so does $\Omega$.
\end{lemma}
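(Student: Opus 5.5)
The plan is to transfer the witnesses for $\Omega'$ directly to $\Omega$, using the definitions verbatim. Fix $\alpha>0$ and let $\beta>0$ be the constant that dsEH for $\Omega'$ provides for this $\alpha$. We claim the same $\beta$ works for $\Omega$. Let $B\subseteq Y$ be finite, $\nu$ a probability measure on $B$, and $\mu$ a probability measure on $\Omega(B)$.

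First, extend $\mu$ to a probability measure $\mu'$ on $\Omega'(B)$ by zero. That is, set $\mu'(f):=\mu(f)$ for $f\in\Omega(B)$ and $\mu'(f):=0$ for $f\in\Omega'(B)\setminus\Omega(B)$. This is a probability measure because $\Omega(B)\subseteq\Omega'(B)$. The functions outside $\Omega(B)$ contribute nothing to the density sum, so
\[
d^{\Omega'}_{\nu,\mu'}(B)=d^{\Omega}_{\nu,\mu}(B).
\]
Hence whichever hypothesis holds for $\Omega$ (density $>\alpha$, or density $<1-\alpha$) also holds for $\Omega'$ with the measure $\mu'$.

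Next, apply dsEH for $\Omega'$. This gives $B_0\subseteq B$ and $A_0'\subseteq\Omega'(B)$ with $\mu'(A_0'),\nu(B_0)>\beta$, such that $f(b)=1$ (respectively $f(b)=0$) for all $f\in A_0'$ and $b\in B_0$.

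Finally, set $A_0:=A_0'\cap\Omega(B)$. Since $\mu'$ vanishes off $\Omega(B)$, we get $\mu(A_0)=\mu'(A_0')>\beta$. Homogeneity is inherited by the subset $A_0$, so $(A_0,B_0)$ witnesses dsEH for $\Omega$ with the same $\beta$.

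There is no genuine obstacle here. The only point needing care is that the measure on $\Omega(B)$ must be pushed forward to $\Omega'(B)$ and the witness set then pulled back by intersecting with $\Omega(B)$. The zero extension makes both steps preserve the measures exactly.
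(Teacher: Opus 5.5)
Your proposal is correct and follows essentially the same route as the paper. You extend $\mu$ by zero to $\Omega'(B)$ (the paper's $\mu'(X):=\mu(X\cap\Omega(B))$), note that the density is unchanged, apply dsEH for $\Omega'$ with the same $\beta$, and then intersect the witness set with $\Omega(B)$; the only cosmetic difference is that you treat both density cases at once, while the paper does one and calls the other analogous.
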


\begin{proof}
Let $\alpha>0$ be given, and let $\beta=\beta\left(\alpha\right)$
be as given by dsEH for $\Omega'$. Let $B\subseteq Y$ be a finite
set, $\mu$ a probability measure on $\Omega\left(B\right)$ and $\nu$
a probability measure on $B$ such that $d_{\nu,\mu}^{\Omega}\left(B\right)>\alpha$.
By hypothesis $\Omega\left(B\right)\subseteq\Omega'\left(B\right)$,
so we can define a probability measure $\mu'$ on $\Omega'\left(B\right)$
by taking $\mu'\left(X\right):=\mu\left(X\cap\Omega\left(B\right)\right)$
for all $X\subseteq\Omega'\left(B\right)$. Then $d_{\nu,\mu'}^{\Omega'}\left(B\right)=d_{\nu,\mu}^{\Omega}\left(B\right)>\alpha$,
and by dsEH for $\Omega'$ we can find some $B_{0}\subseteq B$ and
$A_{0}\subseteq\Omega'\left(B\right)$ with $\mu'\left(A_{0}\right),\nu\left(B_{0}\right)>\beta$
and $f\left(b\right)=1$ for all $f\in A_{0}$ and $b\in B_{0}$.
But by the choice of $\mu'$, replacing $A_0$ by $A_0 \cap \Omega(B)$, we have   $A_{0}\subseteq\Omega\left(B\right)$
and $\mu\left(A_{0}\right)>\beta$, as wanted.

The case $d_{\nu,\mu}^{\Omega}\left(B\right)<1-\alpha$ is analogous.
\end{proof}
Say that an assignment $\Omega$ is \emph{$k$-bounded }if for any
finite $B\subseteq Y$ and any $f\in\Omega\left(B\right)$ we have
$\sum_{b\in B}f\left(b\right)\leq k$.
\begin{lemma}
\label{lem:=000020DSEH=000020for=000020k-bounded=000020assignments}For
any $k\in\mathbb{N}$, if $\Omega$ is $k$-bounded then it satisfies
dsEH.
\end{lemma}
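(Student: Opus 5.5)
The proof is a direct counting argument, handling the two clauses of dsEH separately. Throughout, write $\operatorname{supp} f=\{b\in B: f(b)=1\}$; by $k$-boundedness $|\operatorname{supp} f|\le k$. In both cases I would take the set $A_0$ to be cut out by a single point or a single block of $B$. I would also shrink $\beta$ by a factor $2$ at the end, so that the inequalities obtained are strict.

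\emph{Dense case.} Assume $d^{\Omega}_{\nu,\mu}(B)=\sum_f \mu(f)\,\nu(\operatorname{supp} f)>\alpha$.
\begin{enumerate}
\item Since $\nu(\operatorname{supp} f)\le 1$, the set $A'$ of those $f$ with $\nu(\operatorname{supp} f)>\alpha/2$ satisfies $\mu(A')>\alpha/2$.
\item For each $f\in A'$, $\operatorname{supp} f$ has at most $k$ points, so it contains a \emph{heavy} point $b$, meaning $\nu(b)>\alpha/(2k)$.
\item There are fewer than $2k/\alpha$ heavy points. By pigeonhole there is a heavy $b_0$ with $\mu(\{f\in A': f(b_0)=1\})>\alpha^2/(4k)$.
\item Take $B_0=\{b_0\}$ and $A_0=\{f\in A': f(b_0)=1\}$. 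This gives a $1$-homogeneous rectangle, and $\beta\approx\alpha^2/(4k)$ works.
\end{enumerate}

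\emph{Sparse case.} Assume $d^{\Omega}_{\nu,\mu}(B)<1-\alpha$, which rewrites as
\[\sum_{b\in B}\nu(b)\,\mu(\{f: f(b)=0\})>\alpha .\]
Fix $\varepsilon=\alpha/(8k)$. Split $B$ into the heavy points $H=\{b:\nu(b)\ge\varepsilon\}$, with $|H|\le 1/\varepsilon$, and the light points $L=B\setminus H$. Either the sum restricted to $H$ exceeds $\alpha/2$, or $\nu(L)\ge\alpha/2$.

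\emph{Subcase: the sum over $H$ exceeds $\alpha/2$.} Since $|H|\le 1/\varepsilon$, some single $b\in H$ has $\nu(b)\,\mu(f(b)=0)>\alpha\varepsilon/2$. Both factors are then at least $\alpha\varepsilon/2$. Take $B_0=\{b\}$ and $A_0=\{f:f(b)=0\}$.

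\emph{Subcase: $\nu(L)\ge\alpha/2$.} This is the step I expect to need the most care. I would partition $L$ greedily into $m=2k$ blocks $P_1,\dots,P_m$, each of $\nu$-measure at least $\nu(L)/m-\varepsilon\ge \alpha/(4k)-\alpha/(8k)=\alpha/(8k)$. This is possible because every point of $L$ has mass less than $\varepsilon$. Each $f$ has at most $k$ ones, so it is identically $0$ on at least $m-k=k$ blocks. Hence
\[\sum_{i}\mu(\{f: f\restriction P_i\equiv 0\})\ge k,\]
and some block $P_i$ has $\mu(\{f: f\restriction P_i\equiv 0\})\ge 1/2$. Take $B_0=P_i$ and $A_0=\{f: f\restriction P_i\equiv 0\}$.

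\emph{Choice of $\beta$.} Let $\beta$ be the minimum of the bounds obtained in the three cases, namely a constant multiple of $\alpha^2/k$, halved for strictness. This gives dsEH for $\Omega$. The only delicate points are the greedy partition of the light part and tracking the constants. No model theory enters; only the $k$-boundedness of the supports is used.
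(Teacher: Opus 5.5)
Your argument is correct, and it takes a genuinely different route from the paper. Two degenerate cases are trivial and do not affect this: a $0$-bounded assignment is $1$-bounded, so $\varepsilon=\alpha/(8k)$ and the $2k$ blocks make sense; and for $\alpha\geq 1$ both clauses of dsEH are vacuous.

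The paper does no counting. It reduces, via the lemma on subassignments, to the universal $k$-bounded assignment $\Omega_k$. It then notes that for finitely many affine hyperplanes $B$ in general position in $\mathbb{R}^k$, the incidence patterns $\{f_a\restriction B : a\in\mathbb{R}^k\}$ are exactly $\Omega_k(B)$. Finally it imports dsEH for semialgebraic relations (Alon et al., or the Chernikov--Starchenko regularity lemma) through the transfer lemma.

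You instead work directly with the measures:
\begin{itemize}
\item In the dense case, bounded supports force a $\nu$-heavy point. Pigeonholing over the fewer than $2k/\alpha$ heavy points gives a $1$-homogeneous rectangle with $B_0$ a singleton.
\item In the sparse case, you either find a heavy point on which many $f$ vanish, or you cut the light part into $2k$ blocks of comparable mass. Since each $f$ meets at most $k$ blocks, some block is avoided by half of the $\mu$-mass. This last subcase uses the hypothesis $d<1-\alpha$ only to guarantee $\nu(L)>\alpha/2$.
\end{itemize}

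Your approach gives a self-contained, elementary proof with an explicit polynomial bound, e.g.\ $\beta=\alpha^2/(32k)$. It bypasses both the reduction to $\Omega_k$ and the external semialgebraic theorem, whose constants depend on $k$ through the ambient dimension.

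The paper's approach is conceptual rather than quantitative. After passing to the reduced relation, it exhibits every $k$-weakly normal relation as a sub-pattern of a point--hyperplane incidence relation, i.e.\ it models it by a linear/semialgebraic structure. This is the link between one-basedness and linearity the authors want to emphasize, and it lets them reuse a known theorem rather than prove a new one.
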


\begin{proof}
Let $\Omega_{k}$ be the \emph{universal $k$-bounded assignment}
on $Y$. Namely, for any finite $B\subseteq Y$ we take 
\[
\Omega_{k}\left(B\right):=\left\{ f\in\left\{ 0,1\right\} ^{B}:\sum_{b\in B}f\left(b\right)\leq k\right\} .
\]

By Lemma \ref{lem:=000020DSEH=000020for=000020subassignments}, it
is enough to show that $\Omega_{k}$ satisfies dsEH. Fix $k$, and
consider the family $\Sigma$ of all affine hyperplanes in $\mathbb{R}^{k}$.
Then $\Sigma$ is a definable family in the field of reals. For $B\subseteq\Sigma$
and $a\in\mathbb{R}^{k}$, let $f_{a}:B\to\left\{ 0,1\right\} $ be
the incidence function, i.e. $f_{a}\left(b\right)=1$ if and only
if $a\in b$.

The following is by basic linear algebra: if $B\subseteq\Sigma$ is
a finite set of affine hyperplanes in a general position, then $\Omega\left(B\right)=\left\{ f_{a}\restriction B:a\in\mathbb{R}^{k}\right\} $
is equal to $\Omega_{k}\left(B\right)$. Hence we can interpret $\Omega_{k}$
as $\Omega_{E}$, where $E=E'\cap\left(\mathbb{R}^{k}\times Y\right)$,
$Y$ is an infinite set of affine hyperplanes in a general position
in $\mathbb{R}^{k}$ and $E'$ is a definable relation. Then $E$
satisfies dsEH by \cite{chernikov2018regularity} (in fact, already by \cite{alon2005crossing}),  hence the same holds for $\Omega_{k}$
by Lemma \ref{lem:=000020DSEH=000020transfer}.
\end{proof}
\begin{defn}
Let $E\subseteq X\times Y$ be given.

\begin{enumerate}
\item $E$ is \emph{$k$-weakly normal} if for any $b_{1},\ldots,b_{k}\in Y$,
if $\bigcap_{i=1}^{k}E\left(X,b_{i}\right)\neq\emptyset$, then
$E\left(X,b_{i}\right)=E\left(X,b_{j}\right)$ for some $1\leq i<j\leq k$.
\item $E$ is \emph{reduced }if $E\left(X,b\right)\neq E\left(X,b'\right)$
for any $b\neq b'$ in $Y$.
\end{enumerate}
\end{defn}

\begin{remark}
\label{rem:=000020wn=000020and=000020reduced=000020implies=000020bounded}Note
that if $E$ is $k$-weakly normal and reduced, then $\Omega_{E}$
is $k$-bounded.
\end{remark}

For any $E\subseteq X\times Y$, we define an equivalence relation
$\sim$ on $Y$ by saying $b\sim b'\iff E\left(X,b\right)=E\left(X,b'\right)$.
Let $\tilde{Y}:=Y/\sim$ and let $\tilde{E}\subseteq X\times\tilde{Y}$
be defined by $\tilde{E}\left(x,\tilde{y}\right)\iff E\left(x,y\right)$,
where $\tilde{y}$ denotes the $\sim$-class of $y\in Y$. 
\begin{lemma}
\label{lem:=000020reduction=000020preserves=000020DSEH}$E$ satisfies
dsEH if and only if $\tilde{E}$ does.
\end{lemma}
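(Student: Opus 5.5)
The plan is to transfer finitely supported measures back and forth along the quotient map $\pi: Y \to \tilde{Y}$, $y \mapsto \tilde{y}$, using that $E(a,y)$ depends only on $\tilde y$. The measure on $X$ stays unchanged in both directions.

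\emph{From $E$ to $\tilde E$.} Assume $E$ satisfies dsEH. Let $\alpha>0$ and take $\beta=\beta(\alpha)$ given by dsEH for $E$. Given finitely supported probability measures $\mu$ on $X$ and $\tilde\nu$ on $\tilde Y$:
\begin{itemize}
\item For each $\tilde y$ in the support of $\tilde\nu$, choose one representative $y_{\tilde y}\in Y$.
\item Define $\nu$ on $Y$ by $\nu(y_{\tilde y}) := \tilde\nu(\tilde y)$. Then $\pi_*\nu=\tilde\nu$ and $\mu\otimes\nu(E)=\mu\otimes\tilde\nu(\tilde E)$.
\item If this density is $>\alpha$ (or $<1-\alpha$), dsEH for $E$ yields $X_0$ and $Y_0$ with measures $>\beta$ and $X_0\times Y_0$ homogeneous for $E$.
\item Set $\tilde Y_0 := \pi(Y_0)$. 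Then $\tilde\nu(\tilde Y_0)\ge \nu(Y_0)>\beta$, and $X_0\times\tilde Y_0$ is homogeneous for $\tilde E$, since each class in $\tilde Y_0$ has a representative in $Y_0$ and $\tilde E$ is determined by representatives.
\end{itemize}

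\emph{From $\tilde E$ to $E$.} Assume $\tilde E$ satisfies dsEH. Given $\mu$ and $\nu$ finitely supported on $X$ and $Y$:
\begin{itemize}
\item Push $\nu$ forward to $\tilde\nu:=\pi_*\nu$, so $\tilde\nu(\tilde y)=\sum\{\nu(y): y\sim y'\}$ for $\tilde y = \tilde{y'}$. This is a finitely supported probability measure.
\item The densities agree: $\mu\otimes\nu(E)=\mu\otimes\tilde\nu(\tilde E)$.
\item dsEH for $\tilde E$ gives $X_0$ and $\tilde Y_0$ with measures $>\beta$ and a homogeneous product.
\item Take $Y_0:=\pi^{-1}(\tilde Y_0)$. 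Then $\nu(Y_0)=\tilde\nu(\tilde Y_0)>\beta$, and $X_0\times Y_0$ is $E$-homogeneous because $E(a,y)\iff\tilde E(a,\tilde y)$.
\end{itemize}

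The only point needing care is the measure bookkeeping, in particular that the density identity holds in both constructions. This follows directly from the fact that $E(a,\cdot)$ is constant on $\sim$-classes. With that established, the same $\beta(\alpha)$ works in each direction, and no real obstacle is expected.
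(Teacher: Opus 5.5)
Your proof is correct and follows the paper's argument essentially verbatim. In one direction you push $\nu$ forward along $\pi$ and pull back via $\pi^{-1}(\tilde Y_0)$; in the other you lift $\tilde\nu$ to a single representative per class. The only cosmetic difference is that you take $\tilde Y_0=\pi(Y_0)$ and get the inequality $\tilde\nu(\tilde Y_0)\ge\nu(Y_0)$. The paper takes $\tilde Y_0=\pi(Y_0\cap\supp(\nu))$ and gets equality; either choice works.
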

\begin{proof}
Let $\pi:Y\to\tilde{Y}$ be the quotient map, $\pi(y)=\tilde{y}$.
First assume that $\tilde{E}$ satisfies dsEH. Fix $\alpha>0$, and let $\beta=\beta_{\tilde{E}}(\alpha)$
be given by dsEH for $\tilde{E}$. Let $\mu$ and $\nu$ be measures on $X$ and $Y$, respectively. Let
$\tilde{\nu}:=\pi_{*}\nu$ be the push-forward measure on $\tilde{Y}$, so 
$\tilde{\nu}(\tilde{y})=\sum_{y'\sim y}\nu(y')$. Since
$\tilde{E}(x,\pi(y)) \Leftrightarrow E(x,y)$, we have $\mu\otimes\tilde{\nu}(\tilde{E})=\mu\otimes\nu(E)$.
If $\mu\otimes\nu(E)>\alpha$, then by dsEH for $\tilde{E}$ there are
$X_{0}\subseteq X$ and $\tilde{Y}_{0}\subseteq\tilde{Y}$ such that
$\mu(X_{0}),\tilde{\nu}(\tilde{Y}_{0})>\beta$ and
$X_{0}\times\tilde{Y}_{0}\subseteq\tilde{E}$. Put
$Y_{0}:=\pi^{-1}(\tilde{Y}_{0})$. Then
$\nu(Y_{0})=\tilde{\nu}(\tilde{Y}_{0})>\beta$, and for every
$x\in X_{0}$ and $y\in Y_{0}$ we have
$\pi(y)\in\tilde{Y}_{0}$, hence $\tilde{E}(x,\pi(y))$ and so
$E(x,y)$. Thus $X_{0}\times Y_{0}\subseteq E$. The case
$\mu\otimes\nu(E)<1-\alpha$ is identical.

Conversely, assume that $E$ satisfies dsEH. Fix $\alpha>0$, and let
$\beta=\beta_{E}(\alpha)$ be given by dsEH for $E$. Let $\mu, \tilde{\nu}$ be  measures on $X,\tilde{Y}$, respectively. For every
$\tilde{y}$ in the support of $\tilde{\nu}$, choose a representative
$y\in Y$ with $\pi(y)=\tilde{y}$, and define a measure $\nu$ on $Y$ by $\nu(y)=\tilde{\nu}(\tilde{y})$ 
for $\tilde{y}\in\operatorname{supp}(\tilde{\nu})$, and $\nu(y)=0$
for all other $y\in Y$. Then $\mu\otimes\nu(E)=\mu\otimes\tilde{\nu}(\tilde{E})$.
Suppose first that $\mu\otimes\tilde{\nu}(\tilde{E})>\alpha$. By dsEH
for $E$, there are $X_{0}\subseteq X$ and $Y_{0}\subseteq Y$ such that
$\mu(X_{0}),\nu(Y_{0})>\beta$ and $X_{0}\times Y_{0}\subseteq E$.
Let $\tilde{Y}_{0}:=\pi\bigl(Y_{0}\cap\operatorname{supp}(\nu)\bigr)$. 
Then $\tilde{\nu}(\tilde{Y}_{0})=\nu(Y_{0})>\beta$ and 
$X_{0}\times\tilde{Y}_{0}\subseteq\tilde{E}$. The case
$\mu\otimes\tilde{\nu}(\tilde{E})<1-\alpha$ is the same.
\end{proof}

\begin{prop}
\label{prop:=000020DSEH=000020for=000020weakly=000020normal=000020relations}If
$E$ is $k$-weakly normal, then it satisfies dsEH.
\end{prop}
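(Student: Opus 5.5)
The plan is to combine the lemmas just established: pass to the reduced relation, observe that its assignment is $k$-bounded, and transfer dsEH back. Let $E \subseteq X \times Y$ be $k$-weakly normal, and consider the quotient $\tilde{E} \subseteq X \times \tilde{Y}$ with respect to $\sim$. By Lemma \ref{lem:=000020reduction=000020preserves=000020DSEH}, it suffices to show that $\tilde{E}$ satisfies dsEH.

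\emph{Step 1: $\tilde{E}$ is reduced and $k$-weakly normal.} Reducedness is immediate from the definition of $\sim$: if $\tilde{E}(X,\tilde{b}) = \tilde{E}(X,\tilde{b}')$, then $E(X,b) = E(X,b')$, so $\tilde{b} = \tilde{b}'$. For weak normality, take $\tilde{b}_1, \ldots, \tilde{b}_k$ with a common point $a$. Choose representatives $b_i$. Then $a \in \bigcap_i E(X,b_i)$, so $k$-weak normality of $E$ gives $E(X,b_i) = E(X,b_j)$ for some $i \neq j$. Hence $\tilde{E}(X,\tilde{b}_i) = \tilde{E}(X,\tilde{b}_j)$.

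\emph{Step 2: conclude.} By Remark \ref{rem:=000020wn=000020and=000020reduced=000020implies=000020bounded}, $\Omega_{\tilde{E}}$ is $k$-bounded. I would double-check this remark, since it is the only real content. For $a \in X$ and finite $\tilde{B} \subseteq \tilde{Y}$, suppose $f_a$ took the value $1$ on $k$ distinct points of $\tilde{B}$. These $k$ fibers would share the point $a$, so two of them would coincide, and by reducedness those two points of $\tilde{B}$ would be equal, a contradiction. So in fact $\sum_{b} f_a(b) \leq k-1$.

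By Lemma \ref{lem:=000020DSEH=000020for=000020k-bounded=000020assignments}, $\Omega_{\tilde{E}}$ then satisfies dsEH. Lemma \ref{lem:=000020DSEH=000020transfer} transfers this to $\tilde{E}$, and Lemma \ref{lem:=000020reduction=000020preserves=000020DSEH} transfers it to $E$.

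There is no genuine obstacle; the heavy lifting was the hyperplane-embedding argument in Lemma \ref{lem:=000020DSEH=000020for=000020k-bounded=000020assignments}. The only point requiring care is that the lemmas are stated for the ambient $Y$, so they must be applied with $\tilde{Y}$ in place of $Y$. Those lemmas never use the specific nature of $Y$, so this substitution is harmless. The one place infiniteness might be wanted is the general-position hyperplane model inside Lemma \ref{lem:=000020DSEH=000020for=000020k-bounded=000020assignments}, and that lemma supplies its own infinite family of hyperplanes rather than relying on $Y$. If $\tilde{Y}$ happens to be finite, this is therefore still fine.
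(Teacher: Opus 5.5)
Your proposal is correct and follows the paper's own proof. The paper likewise passes to the reduced relation $\tilde{E}$, notes that it is still $k$-weakly normal and reduced so that $\Omega_{\tilde{E}}$ is $k$-bounded, and then invokes the $k$-bounded-assignment lemma and the reduction lemma. You additionally spell out the transfer lemma between $\tilde{E}$ and $\Omega_{\tilde{E}}$ and the check of the $k$-boundedness remark, both of which the paper uses without comment.
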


\begin{proof}
Note that if $E$ is $k$-weakly normal, then $\tilde{E}$ is $k$-weakly
normal and reduced, hence $\Omega_{\tilde{E}}$ is $k$-bounded. The
result follows by Lemmas \ref{lem:=000020DSEH=000020for=000020k-bounded=000020assignments}
and \ref{lem:=000020reduction=000020preserves=000020DSEH}.
\end{proof}

The claim follows by combining Proposition \ref{prop:=000020DSEH=000020for=000020weakly=000020normal=000020relations}
and Remark \ref{rem:=000020dseh=000020closed=000020under=000020boolean=000020combs}(4).

\section{Strong EH in $1$-semiequational theories}

\subsection{$1$-semiequational theories}
The following generalization of weakly normal formulas to unstable setting is proposed in \cite{chernikov2023semi}.

\begin{defn}\label{defn: 1-semieq}
	\begin{enumerate}
		\item We say that a partitioned formula $\varphi(x,y)$ is a \emph{$(k,1)$-semi-equation} if for any $b_1, \ldots, b_k \in M^{y}$ with $\bigcap_{i=1}^k \varphi(M,b_i) \neq \emptyset$  we have $\varphi(M,b_i) \subseteq \varphi(M,b_j)$ for some $i \neq j \in [k]$.
		\item $\varphi(x,y)$ is a \emph{$1$-semi-equation} if it is a $(k,1)$-semi-equation for some $k \in \mathbb{N}$.
		\item A theory $T$ is \emph{$1$-semiequational} if every partitioned formula is equivalent to a Boolean combination of $1$-semiequations.
	\end{enumerate}
\end{defn}
\begin{fact}
\begin{enumerate}
	\item  $\varphi(x,y)$ is weakly normal if and only if it is stable and a $1$-semi-equation \cite[Proposition 2.19]{chernikov2023semi}. In particular, a  stable theory is $1$-based if and only if it is $1$-semiequational.
	\item An $o$-minimal expansion $M$ of an ordered group is $(2,1)$-semi-equational if and only if $M$ is linear, i.e.~satisfies the CF property in the sense of \cite{loveys1993linear} (see \cite[Proposition 3.2]{chernikov2023semi}).
\end{enumerate}

\end{fact}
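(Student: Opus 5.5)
Since this Fact is quoted from \cite{chernikov2023semi}, I will sketch a self-contained argument for (1), which is the item used in what follows, and only outline (2).

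For (1), the forward direction has two parts. First, $k$-weak normality implies the $(k,1)$-semi-equation property trivially, since $\varphi(M,b_i)=\varphi(M,b_j)$ implies $\varphi(M,b_i)\subseteq \varphi(M,b_j)$. Second, weak normality implies stability. Suppose $\varphi$ has the order property, say $\models\varphi(a_i,b_j)\iff i<j$ for $i,j\in\mathbb{N}$. Take $1\le j_1<\dots<j_k$. Then $a_0$ lies in $\bigcap_p\varphi(M,b_{j_p})$, but no two of these sets coincide: for $p<q$ the element $a_{j_p}$ satisfies $\neg\varphi(a_{j_p},b_{j_p})$ and $\varphi(a_{j_p},b_{j_q})$. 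This contradicts $k$-weak normality. The reversed order property ($\varphi(a_i,b_j)\iff i>j$) is handled symmetrically, using an $a_i$ with index above all $j_p$ as the common point.

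For the converse, assume $\varphi$ is stable and a $(k,1)$-semi-equation. By stability and compactness there is $m$ such that $\varphi$ admits no strict chain $\varphi(M,b_1)\subsetneq\dots\subsetneq\varphi(M,b_{m})$. Such a chain would yield $a_i\in\varphi(M,b_{i+1})\setminus\varphi(M,b_i)$, hence $\varphi(a_i,b_j)\iff i<j$ for $i,j\le m$ (using monotonicity of the chain), i.e.\ the order property of length $m$. Let $N$ be the Ramsey number for $3$-colourings of pairs that forces a homogeneous set of size $\max(k,m)$. I claim $\varphi$ is $N$-weakly normal.
\begin{itemize}
\item Take $b_1,\dots,b_N$ with a common realization and pairwise distinct instances $\varphi(M,b_i)$.
\item Colour a pair $i<j$ by whether $\varphi(M,b_i)\subsetneq\varphi(M,b_j)$, $\varphi(M,b_j)\subsetneq\varphi(M,b_i)$, or the two are incomparable.
\item A homogeneous set of size $\ge k$ in the incomparable colour is impossible. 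Any $k$ of these $b_i$ still have a common realization, so the semi-equation property gives an inclusion among them, and this inclusion is strict because the instances are distinct.
\item Hence there is a homogeneous strict chain of length $m$, contradicting the choice of $m$.
\end{itemize}
The statement that a stable theory is $1$-based iff it is $1$-semiequational then follows from Fact \ref{fac: 1-based iff bool comb weakly normal}. In a stable theory every formula is stable, so the $1$-semi-equations are exactly the weakly normal formulas.

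For (2), I would use the Loveys--Peterzil structure theory \cite{loveys1993linear}. If $M$ is linear, then every definable set is, locally, a Boolean combination of sets cut out by affine inequalities $\lambda(x)+\lambda'(y)< c$, with $\lambda,\lambda'$ definable partial endomorphisms. For a fixed linear part, the instances of such an inequality are linearly ordered by inclusion as the parameter varies, so they are $(2,1)$-semi-equations. Conversely, if $M$ is not linear, then it defines a real closed field on some interval, and hence a $2$-parameter family of curves (e.g.\ lines $y=tx+s$). One would then show that this family cannot be a Boolean combination of $(2,1)$-semi-equations.

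The main obstacle is this last step. In (1), the Ramsey argument requires only routine care with the common realization and with strictness. In (2), however, one has to handle the local nature of the structure theorem and rule out the field configuration via a counting or dimension argument. There I would simply rely on \cite[Proposition 3.2]{chernikov2023semi}.
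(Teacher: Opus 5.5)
Your proof of (1) is correct. Your (2) relies on the citation, as the paper does; you could also close the step you flag using the paper's own results (second paragraph below).

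\textbf{Item (1).} The paper gives no argument for this Fact. It quotes \cite[Proposition 2.19]{chernikov2023semi} and \cite[Proposition 3.2]{chernikov2023semi}. The ``in particular'' clause of (1) comes from Fact~\ref{fac: 1-based iff bool comb weakly normal}, exactly as you derive it. Your self-contained argument goes through:
\begin{enumerate}
\item the half-graph argument shows that weakly normal formulas are stable;
\item a strict chain of $m$ instances yields a half-graph (of size $m-1$ rather than $m$, which is harmless);
\item the Ramsey step correctly plays the chain bound from stability against the antichain bound from the $(k,1)$-semi-equation property, using that every subfamily still has a common point and that comparable distinct instances are strictly comparable.
\end{enumerate}
What this buys over the citation is an explicit weak-normality index. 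You can sharpen it by replacing Ramsey with Dilworth's theorem, as the paper does in the proof of Theorem~\ref{thm: 1-semieq sEH}. The pairwise distinct instances through a common point form a poset under inclusion of width at most $k-1$, and every chain in it has fewer than $m$ elements. So there are at most $(k-1)(m-1)$ such instances, and $\varphi$ is $\bigl((k-1)(m-1)+1\bigr)$-weakly normal. No tie-breaking order is needed here, since the instances are distinct.

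\textbf{Item (2).} The step you single out as the main obstacle can be done with tools already present in this paper.
\begin{enumerate}
\item If $M$ is not linear, the $o$-minimal trichotomy of Peterzil and Starchenko gives a real closed field $R$ definable in $M$ on some interval.
\item Since $R$ has characteristic $0$, the Szemer\'edi--Trotter configuration from the proof of Proposition~\ref{prop: interp, fields, comb lin}(3) shows that the $M$-definable point-line incidence relation of $R$ fails almost linear Zarankiewicz.
\item By Remark~\ref{rem: (2,1)-semieq almost lin Zar}, every Boolean combination of $(2,1)$-semi-equations satisfies almost linear Zarankiewicz. Parameters cause no trouble, since fixing some parameter variables of a $(2,1)$-semi-equation again gives a $(2,1)$-semi-equation.
\end{enumerate}
Hence a non-linear $M$ is not $(2,1)$-semi-equational. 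The Loveys--Peterzil structure theory is needed only for the other direction. There your key observation is the right one: instances of $\lambda(x)+\lambda'(y)<c$ form a chain, and instances of $\lambda(x)+\lambda'(y)=c$ are pairwise equal or disjoint. The domains of the partial endomorphisms can be handled by conjoining with formulas in $x$ alone or in $y$ alone, which are trivially $(2,1)$-semi-equations.
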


\subsection{$1$-semi-equational theories satisfy strong EH}

\begin{theorem}\label{thm: 1-semieq sEH}
Every $1$-semi-equation satisfies sEH. In particular, every $1$-semi-equational theory satisfies sEH (by Remark \ref{rem:=000020dseh=000020closed=000020under=000020boolean=000020combs}).
\end{theorem}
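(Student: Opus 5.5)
The plan is to reduce Theorem \ref{thm: 1-semieq sEH} to a pure-pairs theorem for bipartite graphs excluding a fixed forest, namely the result of \cite{scott2023pure}. I will use it in the following form, and I have not checked that this is its exact formulation. For every forest $H$ with a fixed bipartition there is $\varepsilon>0$ such that the following holds for every finite bipartite graph $G$ with parts $(A,B)$ containing no induced copy of $H$ respecting the sides: there are $A_0\subseteq A$ and $B_0\subseteq B$ with $|A_0|\geq\varepsilon|A|$, $|B_0|\geq\varepsilon|B|$, and $A_0$ either complete or anticomplete to $B_0$. This gives sEH for uniform measures directly. The main work is finding a forest $H$ that a $(k,1)$-semi-equation omits.

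\emph{Choice of $H$.} Assume $\varphi(x,y)$ is a $(k,1)$-semi-equation with $k\geq 2$. Let $H$ be the tree with the following vertices and edges.
\begin{itemize}
\item On the $y$-side there are vertices $b_1,\dots,b_k$.
\item On the $x$-side there is a centre $a$ adjacent to every $b_i$.
\item For each ordered pair $i\neq j$ there is a leaf $a_{ij}$ adjacent only to $b_i$.
\end{itemize}
Suppose the bipartite graph of $\varphi$ on some $A\times B$ contains an induced copy of $H$. Then the image of $a$ lies in $\bigcap_i\varphi(M,b_i)$. Moreover, for all $i\neq j$ the image of $a_{ij}$ lies in $\varphi(M,b_i)\setminus\varphi(M,b_j)$, so $\varphi(M,b_i)\not\subseteq\varphi(M,b_j)$. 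This contradicts the $(k,1)$-semi-equation property. So the graph of $\varphi$ on any finite $A\times B$ is induced-$H$-free in the bipartite sense. The pure-pairs theorem then gives sEH with $\beta=\varepsilon/2$, say, whenever $\mu$ and $\nu$ are uniform on finite sets.

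\emph{General measures.} As in the proof of Proposition \ref{prop: cutting implies sEH}, I first perturb the weights to rationals, at a loss of a small fraction of $\beta$. Then I blow up each support point into a number of clones proportional to its weight. Homogeneous pure pairs of the blow-up pull back to homogeneous pairs of the original supports, with measure $\geq\varepsilon$.

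The point to check is that the blow-up $G'$ is still induced-$H$-free. An induced copy of $H$ in $G'$ projects to a map from $H$ into the original graph. Two vertices of $H$ can collapse only if they are clones, and clones are twins in $G'$, so the corresponding vertices of $H$ must be twins in $H$. In $H$, the $b_i$ have pairwise distinct neighbourhoods because of their private leaves. The centre $a$ is not a twin of any $a_{ij}$ when $k\geq2$. So the only possible identifications are $a_{ij}=a_{ij'}$. These do no harm: a single element lying in $\varphi(M,b_i)$ but outside both $\varphi(M,b_j)$ and $\varphi(M,b_{j'})$ still witnesses both non-inclusions. Hence the projected configuration still contradicts the semi-equation property. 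Alternatively, one can avoid the twin issue by applying a weighted version of the pure-pairs result, if \cite{scott2023pure} provides one.

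\emph{The ``in particular''.} By Fact/Remark \ref{rem:=000020dseh=000020closed=000020under=000020boolean=000020combs}(2), sEH is closed under Boolean combinations. So every formula in a $1$-semi-equational theory satisfies sEH.

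The main obstacle is confirming the exact statement of the pure-pairs theorem. It must give a complete-or-anticomplete pair of linear size on both sides for every excluded forest, with the excluded copy respecting the bipartition. If it only gives such a pair under an extra sparsity assumption, I would first pass to a linear-size piece where $\varphi$ or $\neg\varphi$ is sparse, and I expect this preprocessing to be the delicate part.
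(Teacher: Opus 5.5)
Your reduction has a genuine gap in the dense regime. This is not merely a matter of checking the exact formulation of \cite{scott2023pure}.

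The version you assume is false: a linear complete-\emph{or}-anticomplete pair in every induced-$H$-free bigraph, for every forest $H$. This is why Fact~\ref{fact:sss} only gives an \emph{anticomplete} pair, and only under the hypothesis $|E|\leq(1-\tau)|V_1||V_2|$. Here is a counterexample.
\begin{itemize}
\item Let $\bar G$ be a bipartite graph with sides of size $n$ and girth $>6$, in which any two sets of size $\varepsilon n$ span an edge. A sparse random bipartite graph with $p=n^{-0.9}$, after destroying its $o(n)$ short cycles, will do.
\item Let $G$ be its bipartite complement. For $k\geq 3$, the bipartite complement of your $H$ (and of $T_k$) contains $K_{k,k}$ minus a perfect matching, hence a $4$- or $6$-cycle. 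So $G$ is induced-$H$-free.
\item Yet $G$ has no complete pair of size $\varepsilon n\times\varepsilon n$, by construction. It also has no anticomplete pair with both sides of size $2$, since $\bar G$ has no $C_4$.
\end{itemize}
So induced-$H$-freeness alone cannot give sEH. Your suggested preprocessing cannot repair this either. In such a $G$ every linear-size piece has density $1-o(1)$. Moreover, the graph of $\neg\varphi$ omits only the bipartite complement of $H$, which is not a forest, so Fact~\ref{fact:sss} never applies to it.

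What is missing is a separate argument for the case $(\mu\otimes\nu)(E)>1-\sigma$ that uses the $(k,1)$-semi-equation property directly, not through an omitted configuration. The paper does this with Dilworth's theorem. Write $A,B$ for the supports, $F_b=\varphi(M,b)$, $\eta=\frac{1}{2k}$ and $\sigma=\eta^2$.
\begin{itemize}
\item The set $B^*=\{b\in B:\mu(A\setminus F_b)\leq\eta\}$ has $\nu(B^*)>1-\eta$.
\item Any $k$ of the sets $F_b$ with $b\in B^*$ meet in a set of $\mu$-measure $\geq\frac12$. So the inclusion preorder on $B^*$ has no $k$-element antichain.
\item Hence some chain $C\subseteq B^*$ has $\nu(C)>\frac{1-\eta}{k-1}$. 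If $b_0$ is its least element, then $(A\cap F_{b_0})\times C\subseteq E$ and $\mu(A\cap F_{b_0})\geq 1-\eta$.
\end{itemize}
In the complementary case $(\mu\otimes\nu)(E)\leq 1-\sigma$, your $H$-freeness and blow-up arguments, combined with Fact~\ref{fact:sss} including its density hypothesis, give the anticomplete pair as in Lemma~\ref{lem:weighted-pure-pair}. With this two-case split your outline becomes the paper's proof. Only for $k=2$, where your $H$ is a path on five vertices and its bipartite complement is again a forest, could you instead apply Fact~\ref{fact:sss} to the complement.
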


\begin{remark}
	For the special case of Boolean combinations of $(2,1)$-semiequations, strong EH is proved directly in \cite{fu2023note}. On the other hand, 	it was observed in \cite[Proposition 2.23]{chernikov2023semi}, similarly to the proof of Proposition \ref{prop: weak norm implies strong honest def}, that every $(2,1)$-semi-equation admits a distal cell decomposition,  which using Proposition \ref{prop: cutting implies sEH} gives another proof of this (see also Problem \ref{prob: 1-semieq dist cell}).
\end{remark}

In the rest of the section we prove this theorem. We will use a recent deep result in structural graph theory from \cite{scott2023pure}. A \emph{bigraph} is a graph together with a distinguished ordered bipartition $(V_1,V_2)$, all of whose edges go between $V_1$ and $V_2$.  An \emph{induced copy} of a bigraph $H=(H_1,H_2)$ inside a bigraph $G=(V_1,V_2)$ is always understood side-respectingly: vertices of $H_i$ are mapped to $V_i$, and both cross-edges and cross-nonedges are preserved.

\begin{fact}(Scott--Seymour--Spirkl, \cite[Theorem 1.7]{scott2023pure})\label{fact:sss}
For every forest bigraph $H$ and every $\tau\in(0,1]$, there is a constant $\varepsilon_H(\tau)>0$ such that every finite induced $H$-free bigraph $G=(V_1,V_2,E)$ satisfying $|E|\leq (1-\tau)|V_1||V_2|$ 
contains an anticomplete pair of sets $Z_i \subseteq V_i$  with $|Z_i|\geq \varepsilon_H(\tau)|V_i|$ for both $i =1,2$.
\end{fact}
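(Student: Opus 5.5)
The statement is the bipartite form of the Scott--Seymour--Spirkl ``pure pairs for forests'' theorem. I would prove it in three stages: reduce to trees, reduce to the sparse case, and then prove the sparse case by induction on the tree.

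\emph{Reduction to trees.} Every forest bigraph $H=(H_1,H_2)$ is an induced subbigraph of some tree bigraph $T$. To see this, join the components of $H$ by paths of length $2$ or $3$ whose internal vertices are new and alternate sides. Choose the path lengths so that the endpoints lie on the correct sides. With this choice no new edges appear inside $H$, so $H$ stays induced in $T$. Every induced $H$-free bigraph is then induced $T$-free, so it suffices to treat the case where $H$ is a tree.

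\emph{Reduction to the sparse case.} Call a bigraph $G=(V_1,V_2)$ \emph{$\varepsilon$-sparse} if every $v\in V_i$ has at most $\varepsilon|V_{3-i}|$ neighbours.
\begin{enumerate}
\item First apply a bipartite Rödl/Nikiforov-type lemma to an induced $H$-free $G$. For any $\varepsilon>0$ it yields $U_i\subseteq V_i$ with $|U_i|\geq\delta(\varepsilon,H)|V_i|$ such that $G[U_1,U_2]$ has edge density $<\varepsilon$ or $>1-\varepsilon$. I would prove this by iterating the removal lemma on copies of $H$.
\item To exclude the dense alternative, use the hypothesis $|E|\leq(1-\tau)|V_1||V_2|$. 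An averaging or iteration argument (partition into many pieces, of which a positive proportion must be non-dense) should produce a pair of linear-size sets of density at most $1-\tau/2$. Applying the lemma inside such a pair and iterating, the aim is to land in a pair of density $<\varepsilon$.
\item Finally, delete the high-degree vertices on each side to pass to an $\varepsilon$-sparse pair of linear size.
\end{enumerate}
I am least sure of step 2. If it fails, the fallback is the one-sided ``polynomial Rödl'' argument from the Scott--Seymour--Spirkl series.

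\emph{Sparse core.} The remaining claim is the following. For every tree bigraph $T$ there are $\varepsilon,\eta>0$ such that every $\varepsilon$-sparse, induced $T$-free bigraph $G$ contains an anticomplete pair $Z_1\subseteq V_1$, $Z_2\subseteq V_2$ with $|Z_i|\geq\eta|V_i|$. I would argue by contradiction and induction on $|T|$: assume $G$ has no such anticomplete pair, and build an induced copy of $T$ leaf by leaf. The absence of linear anticomplete pairs says that every linear-size subset of $V_1$ has neighbours covering almost all of any linear-size subset of $V_2$, and symmetrically.

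To keep the copy induced, I would maintain a family of disjoint ``candidate sets''. There is one candidate set for each not-yet-embedded vertex of $T$, each of linear size. The rule is that when a vertex $v$ is embedded, each candidate set is shrunk to the non-neighbours of $v$, except the sets of the tree-children of $v$, which are shrunk to its neighbours. Sparseness guarantees that the shrinking to non-neighbours loses only an $\varepsilon$-fraction. The lack of anticomplete pairs guarantees that most candidates for $v$ have linearly many neighbours in each child's set.

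\emph{Expected main obstacle.} The difficulty is exactly this last point. Keeping neighbourhoods of linear size along a long path while shrinking only by constant factors needs a careful choice of vertices. Greedy choices can shrink the candidate sets geometrically in the depth of the tree. My first attempt would be the ``template'' or levelled-BFS approach of Scott--Seymour--Spirkl. In that approach one first finds a long sequence of disjoint sets $A_1,\dots,A_m$ in which each consecutive pair is dense, in the sense that no linear anticomplete pair exists. One then embeds $T$ by depth, using a counting/averaging step at each level to choose vertices that are typical for all the sets still needed. The constants depend only on $T$, which gives $\varepsilon_H(\tau)$.
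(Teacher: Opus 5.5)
The paper gives no proof of this statement: it is imported from Scott--Seymour--Spirkl as a black box. Your proposal therefore has to stand on its own as a proof of their theorem, and at present it does not.

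Your reduction to trees is fine. Step 2 of your reduction to the sparse case fails as written. A R\"odl-type lemma applied inside a pair of density at most $1-\tau/2$ may always return the dense alternative. For example, the pair may be two disjoint complete bipartite blocks, and the lemma hands you one block. After that the density information is lost, and iterating has no progress measure. The step can be repaired, but with a different tool: apply the regularity lemma to $G$ itself. A bounded side is trivial: take one vertex with at least a $\tau$-fraction of non-neighbours on the other side. Irregular pairs carry little weight, so some $\varepsilon_0$-regular pair $(P,Q)$ has density at most $1-\tau/2$. A single $\varepsilon_0$-regular pair with density in $[\varepsilon,1-\tau/2]$, for $\varepsilon_0$ small in terms of $\varepsilon,\tau,|H|$, contains a side-respecting induced copy of any fixed bipartite $H$. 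To see this, place the vertices of $H_1$ in $P$ one at a time, each typical for all current candidate sets in $Q$, and then choose $H_2$ from the surviving candidate sets. Hence $d(P,Q)<\varepsilon$. Now delete from each side the vertices with more than $2\varepsilon$ times the other side's size of neighbours; by Markov this removes at most half of each side. What remains is a linear-size $4\varepsilon$-sparse pair.

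The real gap is the sparse core, which carries the whole content of the theorem. Your embedding scheme breaks as soon as a vertex with a child is embedded. You keep a linear-size candidate set for each unembedded vertex and shrink the sets of the children of $v$ into $N(v)$. But $|N(v)|\le\varepsilon|V_{3-i}|$ by sparseness, and in general $N(v)$ is sublinear. Your justification is that the absence of linear anticomplete pairs makes most candidates for $v$ have linearly many neighbours in each child's set. This is false: that hypothesis only controls the common neighbourhood of a linear-size set, never that of a single vertex. For example, take a random bigraph with sides of size $n$ and edge probability $n^{-1/2}$. With high probability it is $\varepsilon$-sparse and has no anticomplete pair of linear size, yet every vertex has about $\sqrt n$ neighbours. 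So the children's candidate sets immediately fall below the scale at which your hypothesis says anything. At that scale sparseness also no longer makes passing to non-neighbours of later vertices harmless, since one neighbourhood can swallow a whole candidate set.

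So the obstacle is not geometric shrinkage along the depth of $T$; that costs only a factor depending on $T$, which is allowed. The obstacle is that you must work with linear-size sets of potential images and their joint neighbourhoods rather than with individual vertices. Handling this is the substance of the pure-pairs arguments of Scott--Seymour--Spirkl and coauthors. Your last paragraph names such a template approach without supplying it.
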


We will apply it with particular tree bigraphs:
\begin{definition}
	Fix $k\geq 2$.  Let $T_k$ be the bigraph with left side $L(T_k)=\{c,\ell_1,\ldots,\ell_k\}$, right side $R(T_k)=\{r_1,\ldots,r_k\}$, and the only edges are $(c,r_i)$ and $(\ell_i, r_i)$ for $i=1, \ldots, k$. In particular $T_k$ is a tree (with root $c$).
\end{definition}
\begin{remark}\label{lem:no-twins}
	Note that the bigraph $T_k$ has no distinct same-side twins, i.e.~no two distinct vertices on the same side have the same neighborhood in the opposite side.
\end{remark}

In order to prove strong EH in the sense of Definition \ref{def: sEH}, we first need  a generalization to arbitrary finitely supported probability measures. Throughout the section, all measures are finitely supported probability measures. If $\mu$ is such a measure, $\supp(\mu)$ denotes its finite support.

\begin{lemma}\label{lem:weighted-pure-pair}
For every $k \geq 2$ and $\sigma\in(0,1]$ there is  $\varepsilon>0$ such that the following holds.  Let $G=(A,B,E)$ be a finite induced $T_k$-free bigraph, and $\mu,\nu$ measures on $A,B$.  If $(\mu\otimes\nu)(E)\leq 1-\sigma$, 
then there are anticomplete sets $A_0\subseteq \supp(\mu)$, $B_0\subseteq \supp(\nu)$ with $\mu(A_0), \nu(B_0)\geq \varepsilon$. 
We may take $\varepsilon=\varepsilon_{T_k}(\sigma/2)$
where $\varepsilon_{T_k}$ is the constant from Fact~\ref{fact:sss}.
\end{lemma}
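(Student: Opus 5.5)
We reduce the weighted statement to the unweighted Fact~\ref{fact:sss} by blowing up vertices according to their weights. The key point is Remark~\ref{lem:no-twins}: since $T_k$ has no distinct same-side twins, replacing each vertex of $G$ by a set of pairwise twin copies (an independent set of copies with the same neighbourhood) keeps the graph induced $T_k$-free. Indeed, an induced copy of $T_k$ in the blow-up uses at most one copy of each original vertex, because two copies of the same vertex would be same-side twins in the image. Projecting back therefore gives an induced copy of $T_k$ in $G$.

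\textbf{Step 1 (rational weights).} Discard vertices outside the supports, so we may assume $A=\supp(\mu)$ and $B=\supp(\nu)$. We then approximate $\mu,\nu$ by rational measures $\mu',\nu'$ with the same supports. Since the supports are finite, we can choose these within total variation $\eta$ of $\mu,\nu$, with $\eta$ small relative to $\sigma$ and to the target $\varepsilon$. Then $(\mu'\otimes\nu')(E)\le 1-\sigma+2\eta\le 1-\sigma/2$. The slack between $\sigma$ and $\sigma/2$ is exactly what this approximation uses, which explains the constant $\varepsilon_{T_k}(\sigma/2)$.

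\textbf{Step 2 (blow-up).} Write $\mu'(a)=m_a/N$ and $\nu'(b)=n_b/N$ with a common denominator $N$. Form the bigraph $G'$ with $m_a$ copies of each $a\in A$ on the left and $n_b$ copies of each $b\in B$ on the right. A left copy of $a$ and a right copy of $b$ are adjacent iff $(a,b)\in E$. Then $|V_1'|=|V_2'|=N$ and $|E'|/N^2=(\mu'\otimes\nu')(E)\le 1-\sigma/2$. By the twin argument above, $G'$ is induced $T_k$-free.

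\textbf{Step 3 (apply the Fact and project back).} Fact~\ref{fact:sss} with $\tau=\sigma/2$ gives an anticomplete pair $Z_1,Z_2$ in $G'$ with $|Z_i|\ge\varepsilon_{T_k}(\sigma/2)N$. Let $A_0$ be the set of $a\in A$ having some copy in $Z_1$, and define $B_0$ similarly from $Z_2$. These sets are anticomplete in $G$: if some $(a,b)\in E$ with $a\in A_0$, $b\in B_0$, then the corresponding copies in $Z_1$ and $Z_2$ would be adjacent. Moreover $\mu'(A_0)\ge |Z_1|/N$, and likewise $\nu'(B_0)\ge |Z_2|/N$.

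The main obstacle is transferring these lower bounds from $\mu',\nu'$ back to $\mu,\nu$, which costs up to $\eta$. There are two ways to handle it. One is to choose the approximation multiplicatively, $\mu'(a)\le(1+\eta)\mu(a)$ pointwise, which is possible on a finite support. This gives $\mu(A_0)\ge \varepsilon_{T_k}(\sigma/2)/(1+\eta)$, and letting $\eta\to0$ over a compactness/finiteness argument recovers $\varepsilon_{T_k}(\sigma/2)$: there are only finitely many candidate pairs $(A_0,B_0)$, so one pair works for a sequence of $\eta\to 0$. The other is simply to accept a slightly smaller $\varepsilon$, which still proves the lemma for some $\varepsilon>0$.
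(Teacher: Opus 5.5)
Your proposal is correct and follows essentially the same route as the paper: restrict to the supports, blow up rational-weight measures (using the no-twins property of $T_k$ to keep the blow-up induced $T_k$-free), apply Fact~\ref{fact:sss} with $\tau=\sigma/2$, project back, and recover $\varepsilon_{T_k}(\sigma/2)$ for the original $\mu,\nu$ by a finiteness argument over the candidate pairs $(A_0,B_0)$. The differences are cosmetic: you use a common denominator for both sides and a multiplicative approximation, whereas the paper takes pointwise limits along a subsequence on which a fixed pair $(A_0,B_0)$ recurs. That limit argument also works with your additive total-variation bound, so the transfer step you flag as the main obstacle is not really one.
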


\begin{proof}
Let $A'\defeq \supp(\mu)$, $B'\defeq \supp(\nu)$, and replace $G$ by the induced sub-bigraph $G(A',B')$.  This sub-bigraph is still induced $T_k$-free, and the value of $(\mu\otimes\nu)(E)$ is unchanged,  so we may assume that $\mu, \nu$ have full support on the \emph{finite} sides $A$ and $B$.

First suppose that $\mu$ and $\nu$ have rational weights and that $(\mu\otimes\nu)(E)\leq 1-\tau$ for some $\tau\in(0,1]$.  So for all $a \in A, b \in B$ we have $\mu(a)=m_a/M, \nu(b)=n_b/N$,
where $m_a,n_b$ are some positive integers and $M=\sum_{a\in A}m_a, N=\sum_{b\in B}n_b$. We construct a finite blow-up $\widetilde G$ of $G$ as follows:   replace each left vertex $a\in A$ by $m_a$ copies $(a,1),\ldots,(a,m_a)$;  replace each right vertex $b\in B$ by $n_b$ copies $(b,1),\ldots,(b,n_b)$; and put an edge between $(a,i)$ and $(b,j)$ exactly when $(a,b)\in E$. The two sides of $\widetilde G$ have sizes $M$ and $N$, and its edge density is
$
\frac{|E(\widetilde G)|}{MN}
=
\sum_{(a,b)\in E}\mu(a)\nu(b)
=
(\mu\otimes\nu)(E)
\leq 1-\tau.
$
And $\widetilde G$ is still induced $T_k$-free (Suppose otherwise.  If two vertices of the induced copy on the same side of $T_k$ projected to the same original vertex of $G$, this would contradict Remark~\ref{lem:no-twins}.  Hence the projection of the copy is injective on each side and gives an induced side-respecting copy of $T_k$ in $G$). Applying Fact~\ref{fact:sss}  to $\widetilde G$ with parameter $\tau$, there are anticomplete sets $\widetilde A_0\subseteq A(\widetilde G), \widetilde B_0\subseteq B(\widetilde G)$ with $|\widetilde A_0|\geq \varepsilon_{T_k}(\tau)M,
|\widetilde B_0|\geq \varepsilon_{T_k}(\tau)N$. 
Let $A_0\subseteq A$ and $B_0\subseteq B$ be the projections of $\widetilde A_0$ and $\widetilde B_0$.  These projected sets are anticomplete in $G$: if some projected pair $(a,b)\in A_0\times B_0$ were an edge of $G$, then every copy of $a$ would be adjacent to every copy of $b$ in $\widetilde G$.  Moreover, $\mu(A_0)=\frac{1}{M}\sum_{a\in A_0}m_a
\geq
\frac{|\widetilde A_0|}{M}
\geq \varepsilon_{T_k}(\tau)$, and similarly $\nu(B_0)\geq \varepsilon_{T_k}(\tau)$.
This proves the rational-weight case.

Now let $\mu$ and $\nu$ be arbitrary probability measures on finite sets $A$ and $B$, still with full support, and assume $(\mu\otimes\nu)(E)\leq 1-\sigma$.
Choose rational valued probability measures $\mu_n,\nu_n$ on $A,B$, also with full support, such that $\mu_n\to\mu$ and $\nu_n\to\nu$ pointwise.  Since $(\alpha,\beta)\longmapsto \sum_{(a,b)\in E}\alpha(a)\beta(b)$
is continuous, for all sufficiently large $n$ we have $(\mu_n\times\nu_n)(E)\leq 1-\sigma/2$.
Applying the rational case with $\tau=\sigma/2$ gives anticomplete sets $A_n\subseteq A$ and $B_n\subseteq B$ with $\mu_n(A_n), \nu_n(B_n)\geq \varepsilon_{T_k}(\sigma/2)$.  Only finitely many pairs of subsets of $A,B$ are possible, so some pair $(A_0,B_0)$ occurs for infinitely many $n$.  It is anticomplete in $G$, and passing to the limit along that subsequence gives $\mu(A_0), \nu(B_0)\geq \varepsilon_{T_k}(\sigma/2)$.
\end{proof}

\begin{lemma}\label{lem:G-Tk-free}
	If $k \geq 2$ and $\varphi(x,y)$ is a $(k,1)$-semi-equation in $M$, then the bipartite graph $G$ that it defines on $M^x \times M^y$ is induced $T_k$-free.
\end{lemma}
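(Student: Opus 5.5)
We argue by contradiction: an induced copy of $T_k$ in $G$ would directly violate the $(k,1)$-semi-equation condition. So suppose there is an injective side-respecting map sending $c,\ell_1,\ldots,\ell_k$ to $a, a_1,\ldots,a_k \in M^x$ and $r_1,\ldots,r_k$ to $b_1,\ldots,b_k\in M^y$, preserving both cross-edges and cross-nonedges. In terms of $\varphi$ this gives three facts:
\begin{itemize}
\item $\models \varphi(a,b_i)$ for all $i\in[k]$, since $(c,r_i)$ is an edge;
\item $\models \varphi(a_i,b_i)$ for all $i$;
\item $\models\neg\varphi(a_i,b_j)$ for $i\neq j$, since $(\ell_i,r_j)$ is a non-edge of $T_k$ when $i\neq j$.
\end{itemize}

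The first fact says $a\in\bigcap_{i=1}^k\varphi(M,b_i)$, so this intersection is nonempty. Since $\varphi$ is a $(k,1)$-semi-equation, there are $i\neq j$ in $[k]$ with $\varphi(M,b_i)\subseteq\varphi(M,b_j)$.

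Now $a_i\in\varphi(M,b_i)$ by the second fact, so $a_i\in\varphi(M,b_j)$. This contradicts the third fact, $\neg\varphi(a_i,b_j)$. Hence $G$ contains no induced copy of $T_k$.

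Note that the $b_i$ are automatically distinct, as are the $a_i$, because the embedding is injective; in any case the argument above only uses the edge and non-edge pattern, not distinctness. The assumption $k\geq 2$ ensures that $T_k$ is as defined and that a pair $i\neq j$ exists. There is no real obstacle here: the only point needing care is to check that the non-edges of $T_k$ are exactly the pairs $(\ell_i,r_j)$ with $i\neq j$, which is immediate from the definition.
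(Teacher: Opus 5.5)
Your proof is correct and is essentially the paper's argument: the image of $c$ shows $\bigcap_{i=1}^k\varphi(M,b_i)\neq\emptyset$, the $(k,1)$-semi-equation property gives $\varphi(M,b_i)\subseteq\varphi(M,b_j)$ for some $i\neq j$, and the image of the leaf $\ell_i$ lies in $\varphi(M,b_i)\setminus\varphi(M,b_j)$, a contradiction. Your side remark is also right: distinctness of the $b_i$ is not needed, since the definition of a $(k,1)$-semi-equation quantifies over indices $i\neq j$.
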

\begin{proof}

For $b\in M^y$, let $F_b\defeq \varphi(M;b)$. Assume that $G$ contains an induced side-respecting copy of $T_k$, i.e.~there are $a_0,a_1,\ldots,a_k\in M^x$, $b_1,\ldots,b_k\in M^y$ such that $a_0$ plays the role of $c$, $a_i$ plays the role of $\ell_i$, and $b_i$ plays the role of $r_i$.  Thus $a_0\in F_{b_i}$ for all $i \in [k]$ and $a_i\in F_{b_i}\setminus \bigcup_{j\neq i}F_{b_j}$ for each $i \in [k]$. In particular $\bigcap_{i=1}^k F_{b_i}\neq\varnothing$, and since  $\varphi$ is a $(k,1)$-semi-equation, there are distinct $p,q$ such that $F_{b_p}\subseteq F_{b_q}$.  But the second condition gives $a_p\in F_{b_p}\setminus F_{b_q}$, a contradiction. 
\end{proof}

We are ready to prove Theorem \ref{thm: 1-semieq sEH}:
\begin{proof}[Proof of Theorem \ref{thm: 1-semieq sEH}]

Assume $\varphi(x,y)$ is a $(k,1)$-semi-equation in $M$. We take 

$\eta\defeq \frac{1}{2k}, \ 
\sigma\defeq \eta^2=\frac{1}{4k^2},  \ 
\varepsilon\defeq \varepsilon_{T_k}(\sigma/2)=\varepsilon_{T_k}\!\left(\frac{1}{8k^2}\right), \  \delta_k \defeq \min\left\{
\varepsilon_{T_k}\!\left(\frac{1}{8k^2}\right),
\frac{1-\frac{1}{2k}}{k-1}
\right\}
>0$,
and show that $\varphi(x,y)$ satisfies strong EH with $\delta_k$.

	Let $\mu$ be a finitely supported probability measure on $M^x$, and $\nu$ on $M^y$.  Let $A\defeq \supp(\mu)$, $B\defeq \supp(\nu)$, and consider the finite bigraph $G=(A,B,E)$, where $(a,b)\in E :\Leftrightarrow M \models\varphi(a;b)$. 
For $b\in B$, write $F_b\defeq \varphi(M;b)$, $S_b\defeq A\cap F_b$. We split into two cases according to the $\mu\otimes\nu$-measure of $E$.

\noindent \textbf{Case 1: the relation is not too dense, i.e.~$(\mu\otimes\nu)(E)\leq 1-\sigma$.}

By Lemma~\ref{lem:G-Tk-free}, the bigraph $G$ is induced $T_k$-free.  Applying Lemma~\ref{lem:weighted-pure-pair} gives anticomplete sets $A_0\subseteq A$ and $B_0\subseteq B$ with $\mu(A_0), \nu(B_0) \geq \varepsilon \geq \delta_k$.

\noindent \textbf{Case 2: the relation is very dense, i.e.~$(\mu\otimes\nu)(E)> 1-\sigma$.}

Equivalently, we have $\sum_{b \in B} \mu(A\setminus S_b) \nu(b) <\sigma=\eta^2$. 
Let $B^*\defeq \{b\in B: \mu(A\setminus S_b)\leq \eta\}$, then $\nu(B^*)>1-\eta$.

We next show that the family $\{F_b:b\in B^*\}$ has no $k$-element antichain under inclusion.  Take any $k$ distinct elements $b_1,\ldots,b_k\in B^*$. Since $\mu(A\setminus S_{b_i})\leq \eta$ for each $i$, the union bound gives
\[
\mu\left(A\setminus \bigcap_{i=1}^k S_{b_i}\right) \leq
\sum_{i=1}^k \mu(A\setminus S_{b_i}) \leq k\eta =
\frac{1}{2},
\]
hence
$
\mu\left(\bigcap_{i=1}^k S_{b_i}\right)
\geq \frac{1}{2}$,  so in particular  $\bigcap_{i=1}^k F_{b_i}\neq\emptyset$. The $(k,1)$-semi-equation property gives distinct $p,q \in [k]$ with $F_{b_p}\subseteq F_{b_q}$.

Note that $F_b \subseteq F_{b'}$ is only a pre-order on $B^{\ast}$ (since some of the sets $F_{b}, b \in B^{\ast}$ might be equal). We fix an arbitrary linear order $\leq_0$ on the finite set $B^*$ and define a partial order $\preccurlyeq$ on $B^*$ by
\[
b\preccurlyeq b'
\quad :\Longleftrightarrow\quad
\bigl(F_b\subsetneq F_{b'}\bigr)
\text{ or }
\bigl(F_b=F_{b'}\text{ and } b\leq_0 b'\bigr).
\]
By the previous paragraph, no $k$ distinct elements of $B^*$ form an antichain for $\preccurlyeq$.  Thus the width of the partial order $(B^*,\preccurlyeq)$ is at most $k-1$. By Dilworth's theorem, $B^*$ can be covered by at most $k-1$ chains.  Since $\nu(B^*)>1-\eta$, at least one of these chains, call it $C$, satisfies $\nu(C)>\frac{1-\eta}{k-1}$. 
Let $b_0$ be the $\preccurlyeq$-minimal element of $C$, then $F_{b_0}\subseteq F_b$ for all $b \in C$.
Moreover, as $b_0\in B^*$, we have $\mu(S_{b_0})\geq 1-\eta$.  

Let $A_0\defeq S_{b_0}$ and $B_0\defeq C$. For every $a\in A_0$ and every $b\in B_0$, $a\in F_{b_0}\subseteq F_b$, so  $(a,b)\in E$.  Thus $A_0\times B_0 \subseteq E$, and $\mu(A_0)\geq 1-\eta \geq \delta_k$, $\nu(B_0)>\frac{1-\eta}{k-1} \geq \delta_k$.
\end{proof}

\section{Linear Zarankiewicz in 1-based theories}

\begin{defn}\label{def: lin Zarank} 
	\begin{enumerate}
		\item We say that a partitioned formula $\varphi(x_1, x_2)$  is \emph{combinatorially linear}, if  for every $t \in \mathbb{N}$ there exists some $c = c(\varphi,t) \in \mathbb{R}$ such that: for any finite sets $A_i \subseteq M^{x_i}$ such that the bipartite graph $\varphi(A_1, A_2) = \{(a_1,a_2) \in A_1 \times A_2 : \models \varphi(a_1,a_2)\}$ is $K_{t,  t}$-free (where $K_{t,t}$ denotes the complete bipartite graph with both parts of size $t$), we have $|\varphi(A_1,  A_2)| \leq c (|A_1| + |A_2|)$.
		\item We say that $\varphi(x_1,x_2)$ satisfies  \emph{linear Zarankiewicz} 
		if  for every $t \in \mathbb{N}$ there exists some $c = c(\varphi,t) \in \mathbb{R}$ such that: for any $n \in \mathbb{N}$ and finite sets $A_i \subseteq M^{x_i}$ with $|A_i|=n$ for $i=1,2$ such that $\varphi(A_1, A_2)$ is $K_{t,  t}$-free, we have $|\varphi(A_1,  A_2)| \leq c n$.
		\item And $\varphi(x_1,x_2)$ satisfies  \emph{almost linear Zarankiewicz} 
		if  for every $t \in \mathbb{N}$ and $\varepsilon \in \mathbb{R}_{>0}$  there exists some $c = c(\varphi,t, \varepsilon) \in \mathbb{R}$ such that: for any $n \in \mathbb{N}$ and finite sets $A_i \subseteq M^{x_i}$ with $|A_i|=n$ for $i=1,2$ such that $\varphi(A_1, A_2)$ is $K_{t,  t}$-free, we have $|\varphi(A_1,  A_2)| \leq c n^{1 + \varepsilon}$.

		\item We say that a complete theory $T$ satisfies one of these properties if every partitioned formula does (note that if this holds in one model of $T$, then it holds in all models of $T$).
	\end{enumerate}
\end{defn}
\noindent Note that combinatorial linearity implies linear Zarankiewicz, which in turn implies almost linear Zarankiewicz.

\begin{prop}\label{prop: interp, fields, comb lin}
\begin{enumerate}
\item Almost linear Zarankiewicz implies NIP.
\item If $T_0$ is interpretable in $T$, and $T$ satisfies combinatorial linearity or  (almost) linear Zarankiewicz, then so does $T_0$.
	\item If every partitioned stable formula in $T$ satisfies almost linear Zarankiewicz, then $T$ does not interpret an infinite field.
\end{enumerate}
	
\end{prop}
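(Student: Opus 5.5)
We prove the three items in order; each reduces to a standard combinatorial fact about the relevant class of bipartite graphs.

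\emph{Item (1).} Suppose $\varphi(x_1,x_2)$ has IP. Then for every $n$ there are $a_1,\dots,a_n \in M^{x_1}$ and $b_S \in M^{x_2}$, $S\subseteq [n]$, with $\varphi(a_i,b_S)$ iff $i\in S$. Hence any finite bipartite graph embeds as an induced subgraph of $\varphi(M^{x_1},M^{x_2})$, provided we choose the $a_i$ and the $b_S$ injectively. In particular, take a $K_{2,2}$-free bipartite graph with parts of size $N$ and about $N^{3/2}$ edges, for instance the point-line incidence graph of a projective plane over $\mathbb{F}_q$. Realize it with sets $A_1,A_2$ of size $N$. This gives $|\varphi(A_1,A_2)|\ge c' N^{3/2}$ with $\varphi(A_1,A_2)$ being $K_{2,2}$-free. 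That contradicts almost linear Zarankiewicz with $t=2$ and $\varepsilon=1/4$.

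\emph{Item (2).} Let $\psi(x_1,x_2)$ be a $T_0$-formula between sorts interpreted as $D_i/E_i$, where $D_i$ is definable in $T$ and $E_i$ is a definable equivalence relation. Pull $\psi$ back to a $T$-formula $\varphi(y_1,y_2)$ on $D_1\times D_2$. Given finite $A_i\subseteq D_i/E_i$, choose one representative for each class. This gives sets $A_i'$ with $|A_i'|=|A_i|$, and the induced graph $\varphi(A_1',A_2')$ is isomorphic to $\psi(A_1,A_2)$. So $K_{t,t}$-freeness and the edge count are both preserved, and the constants $c(\varphi,t)$ (resp.\ $c(\varphi,t,\varepsilon)$) work for $\psi$. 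Imaginary parameters are handled by naming the corresponding real parameters, so we may work over the same constants.

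\emph{Item (3).} This is the main step. Suppose $T$ interprets an infinite field $K$. By the field-theoretic argument we will produce a \emph{stable} formula in $T^{\eq}$, namely point-line incidence in $K^2$. Then we transfer it to a stable formula of $T$ by pulling back as in (2); stability is preserved under this pullback because an order property for the pullback would project to one for the original formula.

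Incidence $I(p,\ell)$ between points of $K^2$ and lines $y=mx+c$ is an equation, hence a stable formula. Concretely, it is a weakly normal (in fact normal) relation: two distinct lines meet in at most one point, and it has no infinite ladder. Its graph is $K_{2,2}$-free.

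For the lower bound we need grids. If $K$ has positive characteristic $p$, then $\mathbb{F}_p\subseteq K$, and if $K$ is infinite it contains $\mathbb{F}_{p^r}$ or transcendentals. To be uniform, take a finite set $S\subseteq K$: the set $\{0,\dots,N-1\}\cdot 1$ in characteristic $0$, or, in positive characteristic, some finite subfield $\mathbb{F}_q$ if present, otherwise an arithmetic-type set in $\mathbb{F}_p(t)$. We need a set $S$ with $|S+S\cdot S|$ small, for example $|S\cdot S+S|\le C|S|^{2}$. The standard Szemer\'edi–Trotter lower-bound construction then applies. Take points $P=S\times (S\cdot S+S)$ and lines $y=mx+c$ with $m,c\in S$. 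This gives about $|S|^3$ points (after padding to equal size), $|S|^2$ lines, and $|S|^3$ incidences. With $n\approx |S|^3$ this is $n^{1+\delta}$ incidences for a fixed $\delta>0$ (roughly $4/3$ after balancing). That contradicts almost linear Zarankiewicz for $I$ with $t=2$.

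The expected obstacle is finding such an $S$ in arbitrary characteristic. If $K\supseteq \mathbb{F}_q$ for arbitrarily large $q$, use the full plane $\mathbb{F}_q^2$, which gives $q^2$ points, $q^2$ lines and $q^3$ incidences. Otherwise $K$ contains $\mathbb{F}_p(t)$, and we take $S$ to be the polynomials of degree $<d$ over $\mathbb{F}_p$; then $S\cdot S+S$ has size at most $p^{2d}$.
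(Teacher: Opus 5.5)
Items (1) and (2) are fine and follow the paper. In (1) you use projective-plane incidence graphs where the paper uses Erd\H{o}s's probabilistic graphs; this works because distinct lines are distinct point sets, so the IP embedding is injective. In (3), the reduction to a stable formula of $T$ by pulling back is fine, and so is the case split (arbitrarily large finite subfields versus a transcendental). The finite-field case ($q^2$ points, $q^2$ lines, $q^3$ incidences) is correct.

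\textbf{The gap is the grid construction in item (3).} With lines $y=mx+c$ for $m,c\in S$ you have $|S|^2$ lines. Each meets $P=S\times(S\cdot S+S)$ in exactly $|S|$ points, giving $|S|^3$ incidences. But, as you note yourself, $|P|\approx|S|^3$. So the incidence count is \emph{linear} in $|P|+|L|$, not $n^{1+\delta}$; the sentence ``with $n\approx|S|^3$ this is $n^{1+\delta}$ incidences'' is an arithmetic slip.

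Concretely, for $S=\{0,\dots,N-1\}$ one has $S\cdot S+S=\{0,\dots,N(N-1)\}$, so $|P|\approx N^3$ against $N^3$ incidences. For $S=\mathbb{F}_p[t]_{<d}$ you get $|P|=p^{3d-1}$, $|L|=p^{2d}$ and $p^{3d}$ incidences. So in characteristic $0$, and in the transcendental case of positive characteristic, your construction contradicts nothing.

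What is missing: the intercepts must range over a set $C$ of size about $|S|^2$, while $S\cdot S+C$ stays of size $O(|S|^2)$. This is what the paper does.
\begin{itemize}
\item In characteristic $0$: take slopes in $\{1,\dots,n\}$, intercepts in $\{1,\dots,n^2\}$, and points $\{1,\dots,n\}\times\{1,\dots,2n^2\}$. This gives $|A|+|B|=3n^3$ and $n^4$ incidences.
\item In the transcendental case: take slopes in $\mathbb{F}_p[t]_{<n}$, intercepts in $\mathbb{F}_p[t]_{<2n}$, and $A=B=\mathbb{F}_p[t]_{<n}\times\mathbb{F}_p[t]_{<2n}$. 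With $N=p^n$ this gives $N^3$ points, $N^3$ lines and $N^4$ incidences.
\end{itemize}
Either yields exponent $4/3$, hence the desired contradiction with almost linear Zarankiewicz for $t=2$.

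Separately, your claim that point--line incidence is weakly normal (even normal) is false: infinitely many distinct lines pass through a single point. Had it been weakly normal, Theorem~\ref{thm: bool comb weak norm is comb lin} would make it combinatorially linear, contradicting the very lower bound you want. You only need stability, and that does follow from $K_{2,2}$-freeness: a witness $\varphi(a_i,b_j)\Leftrightarrow i<j$ to the order property produces a $K_{2,2}$ on $\{a_0,a_1\}\times\{b_2,b_3\}$, and these pairs are distinct because $b_1$ separates $a_0,a_1$ and $a_2$ separates $b_2,b_3$.
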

\begin{proof}
(1) Erd\H{o}s \cite{erdos1964extremal} gives  a probabilistic construction of bipartite graphs not satisfying almost  linear Zarankiewicz (in a strong form). If $T$ is not NIP, every bipartite graph appears as an induced (partite) subgraph of a definable relation.

	(2) It is clear that both properties are preserved under reducts. Assume $T$ satisfies (almost) linear Zarankiewicz, and let $\varphi(y_1, y_2)$ be definable in $M^{\eq}$, with $y_i$ of sort $M^{x_i}/E_{i}$ for a definable equivalence relation $E_i$. So $\varphi(\pi_{E_1}(x_1), \pi_{E_{2}}(x_{2}))$ is equivalent to $\psi(x_1,  x_{2})$ for some $L$-definable $\psi$. Given $A_i \subseteq M/E_{i}, |A_i|=n$, choose a single representative for each element of $A_i$, so we have $A'_i \subseteq M^{x_i}$ with $A_i = \{\pi_{E_i}(a) : a \in A'_i\}, |A'_i|=n$. Then for any $a_i \in A'_i$, $\models \psi(a_1, a_2) \iff \models \varphi(\pi_{E_1}(a_1), \pi_{E_{2}}(a_{2}))$, so the graph induced by $\varphi$ on $A_1 \times A_{2}$ is isomorphic to the graph induced by $\psi$ on $A'_1 \times A'_{2}$. It follows that if $\psi$ satisfies (almost) linear Zarankiewicz, then so does $\varphi$.
	
	(3) Without restricting to stable formulas this was observed in \cite{basit2021zarankiewicz, chernikov2018regularity, chernikov2023semi}, as follows.
	Assume $F$ is an infinite field interpretable in $T$. We let $E(x_1,x_2; y_1,y_2) := y_2 = x_1 \cdot y_1 + x_2$ be the (definable, stable) point-line incidence relation on the affine plane over $F$. If $F$ is of characteristic $0$ it contains $\mathbb{Q}$ as a subfield. The standard witness to the optimal lower bound of Szemer\'edi-Trotter theorem shows that $E$ does not satisfy almost linear Zarankiewicz:  let $A$ be the set of points $\{1, \ldots, n \} \times \{1, \ldots, 2n^2\}$ in $\mathbb{Q}$, and let $B$ be the set of all lines  with slope in $\{1,\ldots, n\}$ and $y$-intercept in $\{1, \ldots, n^2\}$. Then $E$ is $K_{2,2}$-free,  $|A|+|B| = 3n^{3}, |E \cap (A \times B)| = n^{4}$. Now suppose $\ch(F) = p > 0$.
If $T$ is not NIP, by (1) a definable relation witnessing IP does not satisfy almost linear Zarankiewicz, so we assume $T$ is NIP. Then by \cite[Corollary 4.5]{kaplan2011artin}, $F$ contains $\mathbb{F}_p^{\alg}$ as a subfield. Then for an arbitrary $n$, we let $q = p^n$ and, viewing $K := \mathbb{F}_{p^n}$ as a subfield of $F$, let $A$ be the set of all points in $K^2$ and $B$ the set of all lines in $K^2$. Then $|A| = |B| = q^2$ and $|E \cap (A \times B)| = q^3$.

	The following  refinement due to Martin Bays \cite[Question 6.3]{bays2023incidence} shows that in fact the (stable) relation $E$ above fails almost linear Zarankiewicz in any field. Let $F$ be a field. The case $\ch(F) = 0$ is above, so assume 	$\ch(F) = p > 0$. If $F$ does not contain a transcendental element (over the prime field) then $\mathbb{F}_{p^n}$ is a subfield of $F$ for arbitrarily large $n$, and the argument above applies. Otherwise $F$ contains $\mathbb{F}_p[t]$ as a subring. For any $n$, we consider  $A = B := \mathbb{F}_p[t]_{< n} \times \mathbb{F}_p[t]_{<2n}$ (where $\mathbb{F}_p[t]_{< n}$ denotes the set of all polynomials of degree less than $n$, viewed as the corresponding subset of $F$) as the sets of ``points'' and ``lines'', respectively, with the same incidence relation $E$ as above.  Let $N := p^n$, then $|A| = |B| = N^3$, but $|E \cap (A \times B)| = N^4$. This shows that in this case we have a lower bound $n^{4/3}$ for Zarankiewicz. 
\end{proof}

First we note that almost linear Zarankiewicz in one based theories follows quickly from the existing literature.
\begin{defn}(\cite[Definition 2.5]{basit2021zarankiewicz} + \cite[Proposition 2.8]{basit2021zarankiewicz})\label{def: basic rels}
	 A binary relation $E \subseteq X_1 \times X_2$ is basic if there exists a linear order $(S,<)$ and functions $f_i : X_i \to S$ such that $E = \{ (x_1,x_2) \in X_1 \times X_2 : f_1(x_1) < f_2(x_2) \}$.
\end{defn}

\begin{fact}\label{fac: bool basic implies almost lin Zar}
	Let $E(x_1, x_2)$ be a Boolean combination of basic relations. Then $E$ satisfies almost linear Zarankiewicz \cite[Theorem 2.17]{basit2021zarankiewicz}. But it need not satisfy linear Zarankiewicz \cite[Proposition 3.5]{basit2021zarankiewicz}.  (These results are also generalized to hypergraphs in \cite{basit2021zarankiewicz}.)
\end{fact}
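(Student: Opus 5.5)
The statement has two parts: an upper bound (almost linear Zarankiewicz for any Boolean combination of basic relations) and a lower-bound construction (linear Zarankiewicz can fail). For the upper bound I would prove something stronger: a polylogarithmic bound $|E(A_1,A_2)| \leq c\, n (\log n)^{m}$, where $m$ is the number of basic relations involved. This clearly gives $c' n^{1+\varepsilon}$ for every $\varepsilon>0$.

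\textbf{Upper bound.} The plan is a biclique-cover argument.

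First, write $E$ as a finite disjoint union of atoms. Each atom is a conjunction of literals, and each literal is either $f^j_1(x_1)<f^j_2(x_2)$ or its negation $f^j_2(x_2)\leq f^j_1(x_1)$. The negation is again basic up to reversing the order on $S$ and using a non-strict comparison, so the same argument will handle it.

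Second, treat a single basic relation on finite $A_1,A_2$ with $|A_i|=n$. Sort the finitely many values $f_1(A_1)\cup f_2(A_2)$ in $S$ and build a balanced binary tree (a segment tree) on them. For every node $v$, let $P_v$ be the elements of $A_1$ whose value lies in the left child of $v$, and $Q_v$ the elements of $A_2$ whose value lies in the right child. Then $E\cap(A_1\times A_2)=\bigsqcup_v P_v\times Q_v$. Each element lies in $O(\log n)$ of these sets, so $\sum_v(|P_v|+|Q_v|)=O(n\log n)$.

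Third, handle a conjunction of $m$ literals by iterating this inside each biclique. Applying the second step to the next literal restricted to $P_v\times Q_v$ costs another logarithmic factor. By induction on $m$, each atom is a disjoint union of bicliques $P\times Q$ with total size $\sum(|P|+|Q|)=O(n\log^m n)$.

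Finally, suppose the graph is $K_{t,t}$-free. Then every biclique satisfies $\min(|P|,|Q|)<t$, hence $|P||Q|\leq (t-1)(|P|+|Q|)$. Summing over all bicliques and all (boundedly many) atoms gives $|E(A_1,A_2)|\leq c\,t\, n\log^m n$.

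\textbf{Lower bound.} I would take the incidence relation between points of $X_1=\mathbb{R}^2$ and axis-parallel rectangles $X_2\subseteq\mathbb{R}^4$. It is the conjunction of the four basic relations $a<p_x$, $p_x<b$, $c<p_y$, $p_y<d$, where each side uses a coordinate projection to $(\mathbb{R},<)$.

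The task is then to exhibit $K_{2,2}$-free point--rectangle configurations with superlinearly many incidences, e.g.\ $\Omega(n\log n/\log\log n)$. The approach I would try first starts from an $N\times N$ grid and the family of dyadic rectangles of all aspect ratios $2^i\times 2^{j}$ with fixed area. Each point then lies in about $\log N$ such rectangles, which gives $\Theta(n\log n)$ incidences, though not yet $K_{2,2}$-free. To destroy the copies of $K_{2,2}$, I would randomly sparsify the points and rectangles and then delete one incidence from each surviving $K_{2,2}$. Two dyadic rectangles of different shapes meet in a dyadic box, so the copies of $K_{2,2}$ can be counted by a sum over pairs of scales.

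I expect this lower-bound construction to be the main obstacle. The difficulty is tuning the sampling densities so that the expected number of copies of $K_{2,2}$ stays below the incidence count while keeping a factor that still tends to infinity (losing only $\log\log n$). The upper bound, by contrast, is routine once the segment-tree decomposition is in place.
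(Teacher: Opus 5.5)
The upper bound is correct. The lower bound has a genuine gap.

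\textbf{Upper bound.} The segment-tree partition of one basic relation into bicliques of total size $O(N\log N)$ works. Iterating it over the literals of each atom, and using $|P||Q|\le (t-1)(|P|+|Q|)$ for every biclique in a $K_{t,t}$-free graph, gives $|E(A_1,A_2)|=O_{m,t}(n\log^m n)$. That is more than almost linear. The paper itself gives no argument here and just cites \cite[Theorem 2.17]{basit2021zarankiewicz}, so your proof is a legitimate self-contained replacement.

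\textbf{The gap.} The lower bound is the entire content of the second claim, and the step ``delete one incidence from each surviving $K_{2,2}$'' is not a legal move. The graph is $E(A_1,A_2)$, the \emph{full} incidence graph induced on the chosen points and rectangles. You can only delete vertices, and deleting a rectangle (or point) destroys all of its incidences.

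Redo your alteration with vertex deletions. Write $x$ for points per sampled rectangle, $y$ for sampled rectangles per point, and $s\lesssim\log n$ for the number of shapes. There are then about $xy/s$ surviving copies of $K_{2,2}$ per incidence. Each deletion costs about $x$ incidences (rectangle) or $y$ (point), so you need $x^2y\lesssim s$ (or $xy^2\lesssim s$). This forces $\min(x,y)\lesssim s^{1/3}$, so uniform random sampling yields only about $n(\log n)^{1/3}$ incidences. That would still suffice for the Fact, but your $n\log n/\log\log n$ target is out of reach this way. As written, the proposal also never fixes parameters or shows that the count eventually exceeds $cn$ for every $c$.

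\textbf{A deterministic fix.} Your own observation that dyadic rectangles of different shapes meet in a dyadic box is enough. Let $P=\{(k2^{-m},\mathrm{rev}_m(k)2^{-m}):0\le k<2^m\}$, where $\mathrm{rev}_m$ reverses $m$-bit binary expansions. Every dyadic rectangle in $[0,1)^2$ of area $2^{-m}$ contains exactly one point of $P$.

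Take $s=\lceil\log_2 m\rceil$. Let $R$ consist of all dyadic rectangles with side lengths $2^{-i}\times 2^{-(m-s-i)}$ for $i\in\{0,s,2s,\dots\}$, $i\le m-s$.
\begin{itemize}
\item Each member of $R$ is a union of $2^s$ dyadic rectangles of area $2^{-m}$, so it contains exactly $2^s$ points.
\item Two distinct members of the same shape are disjoint.
\item Two members of different shapes have $x$-exponents differing by at least $s$. If they meet, they meet in a dyadic rectangle of area at most $2^{-m}$, which contains at most one point.
\end{itemize}
So the incidence graph is $K_{2,2}$-free. Also $|R|=\lfloor m/s\rfloor 2^{m-s}\le 2^m$, and you can pad $R$ with rectangles far from $P$ to get exactly $n=2^m$ rectangles. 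There are $\lfloor m/s\rfloor 2^m=\Omega(n\log n/\log\log n)$ incidences.

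Membership in a half-open rectangle is a conjunction of basic relations and negations of basic relations. So this configuration witnesses the failure of linear Zarankiewicz.
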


\begin{prop}
	If $\varphi(x,y)$ is weakly normal, then it is a Boolean combination of (not necessarily definable) basic relations.
	In particular, if $T$ is $1$-based (or is interpretable in a $1$-based theory), then it satisfies almost linear Zarankiewicz.
\end{prop}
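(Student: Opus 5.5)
We reuse the linear order from the proof of Proposition \ref{prop: weak norm implies strong honest def}. Assume $\varphi(x,y)$ is $k$-weakly normal and let $E_\varphi$ be the equivalence relation $\forall x(\varphi(x,y)\leftrightarrow\varphi(x,y'))$ on $M^y$. Fix a linear order $\leq_\varphi$ on $M^y$ in which every $E_\varphi$-class is a convex set with a minimum and a maximum. Then for each $a\in M^x$, the set $\varphi(a,M^y)$ is a union of at most $k$ classes. Hence it is a union of at most $k$ closed intervals $[f^\ell_t(a),f^r_t(a)]$, $t\in[k]$, with the same conventions as before. In particular, when there are fewer than $k$ intervals we repeat the last one, and when the set is empty we use an empty-interval convention.

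To handle the empty case cleanly, we enlarge the order. Let $S$ be the linear order $M^y$ with a new bottom element and a new top element added, or alternatively a doubled copy of $M^y$ so that strict and non-strict comparisons both become strict. Then
\[
\varphi(a,b)\iff \bigvee_{t=1}^k \bigl(f^\ell_t(a)\leq_\varphi b \wedge b\leq_\varphi f^r_t(a)\bigr).
\]
Each conjunct has the form ``$g_1(x)<g_2(y)$'' or the negation of such a relation. For instance, $f^\ell_t(a)\leq b$ is equivalent to $\neg\,(b<f^\ell_t(a))$, and $b< f^\ell_t(a)$ is basic with $f_1=\mathrm{id}$ on the $y$-side. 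Note that Definition \ref{def: basic rels} orders the variables as $f_1(x_1)<f_2(x_2)$. So for the relation ``$g(y)<h(x)$'' we take the complement of ``$h(x)\le g(y)$'', and we realize $\le$ as $<$ by passing to $S\times\{0,1\}$ with the lexicographic order: send $x$ to $(h(x),0)$ and $y$ to $(g(y),1)$. For empty $\varphi(a,M^y)$, we send $f^\ell_t(a)$ to the new top and $f^r_t(a)$ to the new bottom, so that every disjunct fails. This shows that $\varphi$ is a Boolean combination of at most $2k$ basic relations. These relations are not definable, since $\leq_\varphi$ is arbitrary.

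For the ``in particular'' part, suppose $T$ is $1$-based. By Fact \ref{fac: 1-based iff bool comb weakly normal}, every $\varphi(x_1;x_2)$ is equivalent to a Boolean combination of weakly normal formulas $\psi_j(x_1;x_2)$, and we must make sure these use the same partition. The Boolean combination of the resulting basic relations is again a Boolean combination of basic relations, so Fact \ref{fac: bool basic implies almost lin Zar} gives almost linear Zarankiewicz for $\varphi$. If $T$ is only interpretable in a $1$-based theory, Proposition \ref{prop: interp, fields, comb lin}(2) transfers the property.

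The only delicate point is the bookkeeping of the strict/non-strict inequalities and the empty-fiber convention in the first step, which the product-order trick above handles. Everything else is immediate from the cited facts.
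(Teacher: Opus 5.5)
Your proposal is correct and is essentially the paper's proof. You take the linear order making the $E_\varphi$-classes convex and write $\varphi(a,M^y)$ as a union of at most $k$ closed intervals $[f^\ell_t(a),f^r_t(a)]$. You note that each endpoint comparison is a basic relation or its complement. You then get the ``in particular'' part from the Hrushovski--Pillay characterization, the Basit et al.\ theorem on Boolean combinations of basic relations, and the transfer under interpretations.

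Your extra bookkeeping only makes explicit what the paper leaves implicit: the lexicographic doubling turns $\leq$ into $<$, and the endpoints of an empty fiber go to a new top and bottom. The second device also fixes a slip in the paper. Under its ``arbitrary fixed point'' convention, the displayed equivalence would wrongly put that point into $\varphi(a,M^y)$ when the fiber is empty.
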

\begin{proof}
	In the notation of the proof of Proposition \ref{prop: weak norm implies strong honest def},  $\varphi(M^x,M^y) \iff \bigcup_{t \in [k]} \{(a,b) : f^{\ell}_t(a) \leq b \leq f^{r}_t(a)\}$, which is a Boolean combination of basic relations.
		The ``in particular'' part follows by Fact \ref{fac: bool basic implies almost lin Zar}, Fact \ref{fac: 1-based iff bool comb weakly normal} and Proposition \ref{prop: interp, fields, comb lin}(2).
\end{proof}

We now strengthen the conclusion to combinatorial linearity for Boolean combinations of weakly normal relations with a different proof. The following is well-known, we include a proof for completeness:
\begin{lemma}
\label{lem: conj of w normal} The family of weakly normal formulas is closed under finite conjunctions. In particular, the family of normal formulas is closed under finite conjunctions.
\end{lemma}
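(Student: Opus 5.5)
By induction it suffices to treat two formulas. Suppose $\varphi_1(x,y_1)$ is $k_1$-weakly normal and $\varphi_2(x,y_2)$ is $k_2$-weakly normal. We show that $\psi(x;y_1y_2) := \varphi_1(x,y_1)\wedge\varphi_2(x,y_2)$ is $k$-weakly normal for $k := R(k_1,k_2)$ or any larger number. Here $R$ is a Ramsey-type bound for the following pigeonhole argument; concretely, $k = (k_1-1)(k_2-1)+1$ should suffice, and in any case some explicit finite $k$ will do.

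Take pairs $(b^1_i,b^2_i)$ for $i\in[k]$ with $\bigcap_i \psi(M,b^1_i b^2_i)\neq\emptyset$, and fix $a$ in this intersection. Then $a\in\bigcap_i\varphi_1(M,b^1_i)$, so among any $k_1$ of the indices there are $i\neq j$ with $\varphi_1(M,b^1_i)=\varphi_1(M,b^1_j)$. Hence the equivalence relation $i\sim_1 j \iff \varphi_1(M,b^1_i)=\varphi_1(M,b^1_j)$ on $[k]$ has at most $k_1-1$ classes. By pigeonhole, one class $I$ has size at least $k/(k_1-1)\geq k_2$ (when $k_1 = 1$ the hypothesis is vacuous and there is nothing to do).

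Now apply $k_2$-weak normality of $\varphi_2$ to $k_2$ indices from $I$. We have $a\in\bigcap_{i\in I}\varphi_2(M,b^2_i)$, so there are distinct $i,j\in I$ with $\varphi_2(M,b^2_i)=\varphi_2(M,b^2_j)$. Since also $\varphi_1(M,b^1_i)=\varphi_1(M,b^1_j)$, intersecting gives $\psi(M,b^1_ib^2_i)=\psi(M,b^1_jb^2_j)$, as required.

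Two small points need checking. The first is the degenerate cases: weak normality with the same parameter repeated is trivially satisfied, since equal parameters give equal sets, so nothing changes if some $b$'s coincide. The second is that the conjunction of formulas in different $y$-variables is handled by concatenating parameter tuples, exactly as above; if the $\varphi_i$ share parameter variables, we simply rename them apart first, which does not affect the definable family. For the normal case, $k_1=k_2=2$ gives $k=(1)(1)+1=2$, so conjunctions of normal formulas are normal. There is no real obstacle; the only care needed is getting the counting bound right so that the $k=2$ case is recovered.
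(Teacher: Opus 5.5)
Your proof is correct and is essentially the paper's argument, with the same bound $(k_1-1)(k_2-1)+1$ from pigeonhole. The paper pigeonholes once, on the at most $(k_1-1)(k_2-1)$ possible pairs of fibers; you do it in two stages, first into the at most $k_1-1$ classes of $\varphi_1$-fibers and then applying $k_2$-weak normality of $\varphi_2$ inside a large class.

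One remark is slightly off, though harmless. Renaming shared parameter variables apart does not leave the definable family unchanged; it enlarges it. Your argument still works verbatim with $b^1_i=b^2_i$, and weak normality passes to the diagonal subfamily anyway.
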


\begin{proof}
It is enough to consider the conjunction of two formulas. Suppose that $\varphi_1(x,y)$ is $\ell_1$-weakly normal and $\varphi_2(x,y)$ is $\ell_2$-weakly normal, and let $\varphi(x,y):=\varphi_1(x,y)\land \varphi_2(x,y)$. We show that $\varphi$ is $\left((\ell_1-1)(\ell_2-1)+1\right)$-weakly normal. Indeed, assume that $\{b_i:i<N\}$ is a finite set of parameters such that $\bigcap_{i<N}\varphi(M,b_i)\neq\emptyset$, where $N=(\ell_1-1)(\ell_2-1)+1$. Fix $a$ in this intersection. For each $\nu\in\{1,2\}$, since all the sets $\varphi_\nu(M,b_i)$ contain $a$, there are at most $\ell_\nu-1$ distinct sets among $\{\varphi_\nu(M,b_i):i<N\}$. Hence there are at most $\left(\ell_1-1\right)\left(\ell_2-1\right)$ possible pairs of sets $
\left(\varphi_1(M,b_i),\varphi_2(M,b_i)\right)$. 
By the pigeonhole principle, for some $i\neq j$ the two corresponding pairs are equal, and then $\varphi(M,b_i)=\varphi(M,b_j)$. This proves $N$-weak normality of $\varphi$. The claim for finite conjunctions follows by induction, and the claim for normal formulas is the special case $\ell_1=\ell_2=2$.
\end{proof}

\begin{lemma}
\label{lem: universal is comb lin}
The full relation $X\times Y$ is combinatorially linear.
\end{lemma}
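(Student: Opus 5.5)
We need to show: the full relation $E = X\times Y$ is combinatorially linear, i.e.\ for every $t$ there is $c$ such that whenever $A_1\subseteq X$, $A_2\subseteq Y$ are finite and $E(A_1,A_2)=A_1\times A_2$ is $K_{t,t}$-free, we have $|A_1\times A_2|\le c(|A_1|+|A_2|)$.

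The plan is a direct case analysis on the sizes of the two sides. Since every pair is an edge, the bipartite graph $A_1\times A_2$ is complete. It contains a copy of $K_{t,t}$ as soon as $|A_1|\ge t$ and $|A_2|\ge t$. Hence $K_{t,t}$-freeness forces $\min(|A_1|,|A_2|)\le t-1$.

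Suppose without loss of generality that $|A_1|\le t-1$. Then
\[
|A_1\times A_2| = |A_1|\,|A_2|\le (t-1)|A_2|\le (t-1)(|A_1|+|A_2|).
\]
The case $|A_2|\le t-1$ is symmetric. So we may take $c(t):=\max(t-1,1)$. The case $t=1$ is degenerate: $K_{1,1}$-freeness means there are no edges at all, so one of the sets is empty and the bound holds trivially.

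There is no real obstacle here. The only point to check is that "$K_{t,t}$-free" is read in the non-induced sense, where any $t$ vertices on each side spanning all edges give a copy; for the complete relation, every choice of $t$ vertices on each side does so. The lemma presumably serves as the base case in an induction over Boolean combinations of weakly normal formulas, closed under conjunctions by Lemma \ref{lem: conj of w normal}.
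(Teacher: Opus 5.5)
Your proof is correct and is essentially the paper's argument: $K_{t,t}$-freeness of the complete bipartite graph $A_1\times A_2$ forces $\min\{|A_1|,|A_2|\}<t$, which gives $|A_1\times A_2|\le (t-1)(|A_1|+|A_2|)$. The paper simply takes $c=t-1$, which also covers $t=1$, so your separate treatment of that case is unnecessary but harmless.
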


\begin{proof}
Fix $r<\omega$. If $A\subseteq X$ and $B\subseteq Y$ are finite and $A\times B$ is $K_{r,r}$-free, then $\min\{|A|,|B|\}<r$. Hence $|A\times B|\leq (r-1)(|A|+|B|)$, as required.
\end{proof}

\begin{lemma}
\label{lem: weak normal conj linear} Let $E\subseteq X\times Y$ be given by $E=E_1\land E_2$, where $E_1$ is weakly normal and $E_2$ is combinatorially linear. Then $E$ is combinatorially linear.
\end{lemma}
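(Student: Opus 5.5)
The plan is to split $B$ according to the fibres of $E_1$. On each part, $E_1$ is either full or empty, so $E$ agrees with $E_2$ there, and weak normality of $E_1$ controls how many parts a single point of $A$ can meet.

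Assume $E_1$ is $k$-weakly normal, and fix $t\in\mathbb{N}$. Let $c_2=c(E_2,t)$ be the constant from combinatorial linearity of $E_2$. We aim to show that $c=c_2\cdot\max\{k-1,1\}$ works for $E$.

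Let finite $A\subseteq X$, $B\subseteq Y$ be given with $E(A,B)$ being $K_{t,t}$-free. Partition $B$ into the classes $C_1,\ldots,C_m$ of the equivalence relation $b\sim b' \iff E_1(X,b)=E_1(X,b')$ restricted to $B$. For each class $C_j$ put $A_j\defeq \{a\in A: E_1(a,b)\text{ for some (equivalently all) } b\in C_j\}$. Then $A_j\times C_j\subseteq E_1$, so on $A_j\times C_j$ the relation $E$ coincides with $E_2$.

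The key steps are as follows.

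First, every edge $(a,b)\in E(A,B)$ with $b\in C_j$ satisfies $a\in A_j$. Hence
\[
|E(A,B)|=\sum_{j=1}^m |E(A_j,C_j)| = \sum_{j=1}^m |E_2(A_j,C_j)|.
\]

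Second, each $E_2(A_j,C_j)=E(A_j,C_j)$ is a subgraph of $E(A,B)$, so it is $K_{t,t}$-free. Combinatorial linearity of $E_2$ then gives $|E_2(A_j,C_j)|\le c_2(|A_j|+|C_j|)$.

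Third, sum over $j$. Trivially $\sum_j |C_j|= |B|$. The point is that $\sum_j |A_j|\le (k-1)|A|$. Indeed, if some $a$ lay in $k$ distinct sets $A_{j_1},\ldots,A_{j_k}$, pick $b_i\in C_{j_i}$. Then $a\in\bigcap_{i} E_1(X,b_i)$, but the sets $E_1(X,b_i)$ are pairwise distinct, contradicting $k$-weak normality. (If $k=1$, no fibre of $E_1$ is nonempty, so $E=\emptyset$ and there is nothing to prove.) Combining these gives $|E(A,B)|\le c_2\bigl((k-1)|A|+|B|\bigr)\le c(|A|+|B|)$.

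I do not expect a real obstacle. The only point needing care is the counting $\sum_j|A_j|\le(k-1)|A|$, which is exactly where weak normality enters. It must be applied to representatives from distinct $\sim$-classes, so that the fibres are genuinely distinct. One should also note that the definition of weak normality for abstract relations matches the one given for $k$-weakly normal $E\subseteq X\times Y$ earlier in the paper.
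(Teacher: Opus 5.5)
Your proposal is correct and is essentially the paper's own proof. Both partition $B$ by equality of $E_1$-fibres, observe that $E(A,C)=E_2(A\cap F_C,C)$ on each class, apply combinatorial linearity of $E_2$ class by class, and use $k$-weak normality to get $\sum_C|A\cap F_C|\le (k-1)|A|$. Your separate treatment of $k=1$ is harmless but not needed, since the resulting bound $c_2\bigl((k-1)|A|+|B|\bigr)$ already covers that case.
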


\begin{proof}
Let $E_1$ be $\ell$-weakly normal. Fix $r<\omega$, and let $A\subseteq X$ and $B\subseteq Y$ be finite such that $E(A,B)$ is $K_{r,r}$-free. Let $c_2:=c(E_2,r)$.

Partition $B$ according to equality of $E_1$-fibers: $b\equiv b'$ if and only if $E_1(X,b)=E_1(X,b')$. For an equivalence class $C$, write $F_C:=E_1(X,b)$ for any $b\in C$. Then
\[
E(A,C)=E_2(A\cap F_C,C).
\]
Moreover $E_2(A\cap F_C,C)$ is $K_{r,r}$-free; otherwise the same $K_{r,r}$ would be contained in $E(A,C)$. Hence, by combinatorial linearity of $E_2$,
\[
|E(A,C)|\leq c_2(|A\cap F_C|+|C|).
\]
Summing over the $\equiv$-classes $C$ in $B$, we obtain
\[
|E(A,B)|\leq c_2\left(\sum_C |A\cap F_C|+|B|\right).
\]
For each $a\in A$, there are at most $\ell-1$ distinct $E_1$-fibers $F_C$ with $a\in F_C$; otherwise $\ell$ distinct such fibers would have a common point, contradicting $\ell$-weak normality. Therefore
\[
\sum_C |A\cap F_C|=\sum_{a\in A}|\{C:a\in F_C\}|\leq (\ell-1)|A|.
\]
Thus $|E(A,B)|\leq c_2((\ell-1)|A|+|B|)$, so $E$ is combinatorially linear.
\end{proof}

\begin{lemma}
\label{lem: lin disj lin}Let $E(x,y)=E_1(x,y)\lor E_2(x,y)$
with both $E_1,E_2$ combinatorially linear. Then $E$ is combinatorially linear.
\end{lemma}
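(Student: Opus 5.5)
The plan is to bound the edges of $E_1$ and of $E_2$ separately on the same vertex sets, each via the combinatorial linearity of the corresponding relation. We fix $r<\omega$ and finite sets $A\subseteq X$, $B\subseteq Y$ such that $E(A,B)$ is $K_{r,r}$-free. Since $E=E_1\lor E_2$, we have $E_1(A,B)\subseteq E(A,B)$ and $E_2(A,B)\subseteq E(A,B)$ as subsets of $A\times B$. Any copy of $K_{r,r}$ inside $E_\nu(A,B)$ would then be a copy of $K_{r,r}$ inside $E(A,B)$, so both $E_1(A,B)$ and $E_2(A,B)$ are $K_{r,r}$-free.

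Combinatorial linearity of $E_\nu$ with parameter $r$ then gives $|E_\nu(A,B)|\leq c(E_\nu,r)(|A|+|B|)$ for $\nu=1,2$. Combining this with the union bound $|E(A,B)|\leq |E_1(A,B)|+|E_2(A,B)|$, we obtain
\[
|E(A,B)|\leq \bigl(c(E_1,r)+c(E_2,r)\bigr)(|A|+|B|).
\]
Taking $c(E,r):=c(E_1,r)+c(E_2,r)$ shows that $E$ is combinatorially linear.

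There is no real obstacle in this argument. The one point to verify is the monotonicity of $K_{r,r}$-freeness under passing to a subrelation, and this holds because the definition of combinatorial linearity only requires the bipartite graph induced on $A\times B$ to be $K_{r,r}$-free (not necessarily induced-free). This is the reason the argument works for disjunctions, whereas conjunctions needed the weak normality argument of Lemma~\ref{lem: weak normal conj linear}.
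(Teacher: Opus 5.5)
Your proof is correct and is essentially the paper's argument: $K_{r,r}$-freeness of $E(A,B)$ passes to the subrelations $E_1(A,B)$ and $E_2(A,B)$, and the union bound then gives $|E(A,B)|\leq (c(E_1,r)+c(E_2,r))(|A|+|B|)$.
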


\begin{proof}
Fix $r<\omega$, and let $A\subseteq X,B\subseteq Y$ be finite such that $E(A,B)$ is $K_{r,r}$-free. Then $E_i(A,B)$ is $K_{r,r}$-free for $i=1,2$. Let $c_i:=c(E_i,r)$. Then
\[
|E(A,B)|\leq |E_1(A,B)|+|E_2(A,B)|\leq (c_1+c_2)(|A|+|B|). \qedhere
\]
\end{proof}

\begin{lemma}
\label{lem: comp union weak normal lin}Let
\[
E(x,y):=\neg\bigvee_{1\leq s\leq t}E_s(x,y),
\]
where each $E_s$ is weakly normal. Then $E$ is combinatorially linear.
\end{lemma}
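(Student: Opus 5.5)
We would argue by induction on $t$, absorbing one weakly normal relation at a time, in the spirit of Lemma~\ref{lem: weak normal conj linear} but with the weakly normal conjunct negated. For $t=0$ the relation $E$ is the full relation $X\times Y$, which is combinatorially linear by Lemma~\ref{lem: universal is comb lin}. For the inductive step we write $E=\neg E_t\wedge E'$ with $E':=\neg\bigvee_{s<t}E_s$, which is combinatorially linear by the inductive hypothesis. So it suffices to prove: \emph{if $E_1$ is $\ell$-weakly normal and $E_2$ is combinatorially linear, then $\neg E_1\wedge E_2$ is combinatorially linear.}

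Fix $r$ and finite $A\subseteq X$, $B\subseteq Y$ with $E(A,B)$ $K_{r,r}$-free, and let $c_2:=c(E_2,r)$. As in Lemma~\ref{lem: weak normal conj linear}, we partition $B$ into classes $C$ of equal $E_1$-fibers, with common fiber $F_C$. Then
\[
E(A,C)=E_2(A\setminus F_C,C),
\]
and this graph is $K_{r,r}$-free. The difficulty is that $\sum_C|A\setminus F_C|$ is no longer controlled by weak normality. Weak normality only says that each $a\in A$ lies in $F_C$ for at most $\ell-1$ classes, so $a$ lies \emph{outside} almost all of them. We therefore do not sum blindly. Instead we use the extra rigidity coming from $K_{r,r}$-freeness.

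The key observation is the following. For each $a\in A$, the set $\neg E_1(a,B)$ is all of $B$ except at most $\ell-1$ classes. Hence, for any $a_1,\dots,a_r\in A$, the common $E$-neighbourhood of the $a_i$ equals $\bigcap_i E_2(a_i,B)$ minus at most $r(\ell-1)$ classes. By $K_{r,r}$-freeness this common neighbourhood has fewer than $r$ elements.

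The plan is to split the classes into two kinds. A class $C$ is \emph{heavy} if $|A\setminus F_C|\geq r$ and $|C|\geq r$; otherwise it is \emph{light}.
\begin{enumerate}
\item For a light class, $|E(A,C)|$ is at most $(r-1)|C|$ when $|A\setminus F_C|<r$. When instead $|C|<r$, we charge the edges of $C$ to the $A$-side, using the fact that each $a$ meets few fibers, and aim for a bound of order $|A|+|B|$ overall.
\item For heavy classes we apply combinatorial linearity of $E_2$ on $(A\setminus F_C)\times C$. To avoid paying $|A\setminus F_C|$ repeatedly, we would instead apply $E_2$ \emph{once}, on the union $A\times\bigcup_{C\text{ heavy}}C$. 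On this set $E$ and $E_2$ differ only on pairs $(a,b)$ with $a\in F_{[b]}$. Each $a$ has at most $\ell-1$ such classes, so we get a correction of at most $(\ell-1)$ classes per $a$.
\end{enumerate}

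To make step (2) work, we need that $E_2$ restricted to $A\times\bigcup_{C\text{ heavy}}C$ is itself $K_{r',r'}$-free for some $r'$ depending only on $r$ and $\ell$. Suppose $E_2$ contained a large biclique $A'\times B'$ there. We would pass to a subbiclique avoiding the bad pairs $a\in F_{[b]}$. Each $a\in A'$ excludes at most $\ell-1$ classes, so a Ramsey or pigeonhole argument should find $r$ elements of $A'$ and $r$ elements of $B'$, spread over classes none of which is excluded, giving a $K_{r,r}$ in $E$, which is a contradiction. This needs $B'$ to meet many classes, or one class in at least $r$ points, hence the restriction to heavy classes.

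We expect this transfer of $K_{r,r}$-freeness from $E$ to $E_2$ on the heavy part to be the main obstacle. In particular, the case where a biclique of $E_2$ concentrates on a few classes, each of which is excluded by many $a$'s, has to be handled by a separate count using the bound $\ell-1$ per $a$ and possibly a larger $r'$. Once this is in place, we add the light and heavy contributions and apply Lemma~\ref{lem: lin disj lin} (or simply sum the bounds) to get $|E(A,B)|\leq c(|A|+|B|)$ with $c$ depending only on $r$, $\ell$ and $c(E_2,r')$.
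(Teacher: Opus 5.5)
\textbf{There is a genuine gap.} Your reduction to the claim ``$E_1$ $\ell$-weakly normal and $E_2$ combinatorially linear imply $\neg E_1\wedge E_2$ combinatorially linear'' is legitimate, and that claim is true. But the argument you sketch for it breaks at exactly the two steps you leave open.

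In step (2), the transfer of $K_{r,r}$-freeness from $E$ to $E_2$ on $A\times\bigcup_{C\text{ heavy}}C$ is false for every $r'$. Let $E_1=E_2$ be an equivalence relation $x\sim y$ on $X=Y$ with infinite classes $X_1,X_2,\ldots$; this relation is normal and combinatorially linear. Take $A=A_1\cup A_2$ with $A_1\subseteq X_1$, $|A_1|=N$, $A_2\subseteq X_2$, $|A_2|=r$, and take $B\subseteq X_1$ with $|B|=N$. Then $E=\emptyset$ and $B$ is a single heavy class, yet $E_2(A,B)\supseteq A_1\times B\cong K_{N,N}$. A biclique of $E_2$ can sit entirely inside $(A\cap F_C)\times C$, where $E_1$ holds, so neither pigeonhole nor a larger $r'$ helps.

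Step (1) is not handled either. The fact that each $a$ lies in at most $\ell-1$ fibers controls the pairs that $E_1$ removes from $E_2$, not the edges of $E$. If $E_1$ is reduced, e.g.\ equality on $X=Y$ (which is normal), every class is a singleton and hence light with $|C|<r$. Then ``charging to the $A$-side'' is just the original problem restated.

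What is missing is a way to invoke combinatorial linearity of $E_2$ on a rectangle where $E_1$ vanishes identically, while keeping a constant fraction of the edges of $E$. A random restriction does this. Select each fiber-class of $B$ independently with probability $1/\ell$. Let $B_S$ be the union of the selected classes and $A_S$ the set of $a\in A$ lying in no selected fiber. Then $E(A_S,B_S)=E_2(A_S,B_S)$ is an induced subgraph of $E(A,B)$, hence $K_{r,r}$-free, so it has at most $c(E_2,r)(|A|+|B|)$ edges. Each edge $(a,b)\in E(A,B)$ survives with probability at least $\frac{1}{e\ell}$: the class of $b$ must be selected, and the at most $\ell-1$ classes whose fibers contain $a$ (none of which is the class of $b$) must not be. Averaging gives $|E(A,B)|\le e\ell\, c(E_2,r)(|A|+|B|)$.

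The paper avoids a general $E_2$ altogether and keeps $E$ in its special form. It picks $a_1,\ldots,a_r\in A$ and notes that their common $E$-neighbourhood $B_0$ is exactly the set of $b$ outside every $E_s$-fiber through some $a_i$; this uses that the base relation is full. Hence $|B_0|<r$. It then covers $B\setminus B_0$ by at most $r\sum_s(\ell_s-1)$ fiber classes, and on each such class identifies $E$ with $E^{\widehat{s}}$ restricted to $(A\setminus F)\times C$. Induction on $t$ is therefore applied only boundedly many times. Your ``key observation'' is the seed of that argument, but it only has force when $\bigcap_i E_2(a_i,B)=B$, that is, when $E_2$ is not generalized.
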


\begin{proof}
We prove this by induction on $t$. For $t=0$ this is Lemma \ref{lem: universal is comb lin}.

Assume $t>0$ and that the result is known for all smaller values of $t$. Let $E_s$ be $\ell_s$-weakly normal. Fix $r<\omega$, and let $A\subseteq X$ and $B\subseteq Y$ be finite such that $E(A,B)$ is $K_{r,r}$-free. If $|A|<r$, then $|E(A,B)|\leq (r-1)|B|$, so we may assume that $|A|\geq r$. Choose $A_*:=\{a_1,\ldots,a_r\}\subseteq A$. Let
\[
B_0:=\{b\in B: \neg E_s(a_i,b) \textrm{ for all } 1\leq s\leq t,1\leq i\leq r\}.
\]
Then $A_*\times B_0\subseteq E$, so $|B_0|<r$. Hence
\[
|E(A,B_0)|\leq (r-1)|A|.
\]

\noindent For $1\leq s\leq t$ and $1\leq i\leq r$, put
\[
B_{s,i}:=\{b\in B:E_s(a_i,b)\}.
\]
Then $B\subseteq B_0\cup\bigcup_{s,i}B_{s,i}$. Partition each $B_{s,i}$ according to equality of $E_s$-fibers. Since every fiber occurring in $B_{s,i}$ contains $a_i$, $\ell_s$-weak normality implies that this partition has at most $\ell_s-1$ classes. Write these classes as $B_{s,i,\lambda}$, and write $F_{s,i,\lambda}:=E_s(X,b)$ for $b\in B_{s,i,\lambda}$. Let
\[
E^{\widehat{s}}(x,y):=\neg\bigvee_{u\neq s}E_u(x,y).
\]
On $(A\setminus F_{s,i,\lambda})\times B_{s,i,\lambda}$, the relation $E_s$ is false, while on $(A\cap F_{s,i,\lambda})\times B_{s,i,\lambda}$ the relation $E$ is false. Hence
\[
E(A,B_{s,i,\lambda})=E^{\widehat{s}}(A\setminus F_{s,i,\lambda},B_{s,i,\lambda}).
\]
The right-hand side is $K_{r,r}$-free, since any $K_{r,r}$ in it would be a $K_{r,r}$ in $E(A,B)$. By the induction hypothesis applied to $E^{\widehat{s}}$, there is a constant $c_{\widehat{s}}=c(E^{\widehat{s}},r)$ such that
\[
|E(A,B_{s,i,\lambda})|\leq c_{\widehat{s}}(|A\setminus F_{s,i,\lambda}|+|B_{s,i,\lambda}|)\leq c_{\widehat{s}}(|A|+|B|).
\]
There are at most $r\sum_{s=1}^t(\ell_s-1)$ triples $(s,i,\lambda)$. Therefore, taking $C:=\max_{1\leq s\leq t}c_{\widehat{s}}$,
\[
|E(A,B)|\leq (r-1)|A|+r\left(\sum_{s=1}^t(\ell_s-1)\right)C(|A|+|B|),
\]
which is linear in $|A|+|B|$.
\end{proof}

\begin{theorem}
\label{thm: bool comb weak norm is comb lin} Let
$E$ be an arbitrary Boolean combination of weakly normal relations. Then
$E$ is combinatorially linear. In particular, if $T$ is $1$-based (or is interpretable in a $1$-based theory), then it is combinatorially linear.
\end{theorem}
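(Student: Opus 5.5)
Write $E$ in disjunctive normal form: $E = \bigvee_{j} D_j$, where each $D_j$ is a conjunction of weakly normal relations and negations of weakly normal relations. By Lemma \ref{lem: lin disj lin}, applied inductively on the number of disjuncts, it suffices to show that each such conjunction $D$ is combinatorially linear. So fix
\[
D = \bigwedge_{p\in P} E_p \land \bigwedge_{s=1}^{t} \neg E'_s,
\]
with all $E_p, E'_s$ weakly normal.

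First, by Lemma \ref{lem: conj of w normal} the positive part $E^+ := \bigwedge_{p} E_p$ is itself weakly normal (if $P=\emptyset$, treat it as absent). The negative part is $\bigwedge_s \neg E'_s = \neg\bigvee_{s\le t} E'_s$, which is combinatorially linear by Lemma \ref{lem: comp union weak normal lin}; when $t=0$ this is the full relation, covered by Lemma \ref{lem: universal is comb lin}. Then $D = E^+ \land \neg\bigvee_s E'_s$, a conjunction of a weakly normal relation with a combinatorially linear one. Lemma \ref{lem: weak normal conj linear} gives that $D$ is combinatorially linear, and if $P = \emptyset$ we are already done by the previous step. This proves the first claim.

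For the ``in particular'': if $T$ is $1$-based, Fact \ref{fac: 1-based iff bool comb weakly normal} shows every partitioned formula $\varphi(x;y)$ is equivalent to a Boolean combination of weakly normal formulas in the same variable partition. Combinatorial linearity only concerns the relation defined on $M^{x}\times M^{y}$, so it follows from the first part. For theories interpretable in a $1$-based theory, I would use the argument of Proposition \ref{prop: interp, fields, comb lin}(2). Choose one representative per class, so that the induced finite bigraph is isomorphic to one induced by an $L$-formula $\psi$. This preserves $|A_i|$, $K_{t,t}$-freeness and edge counts, so the bound $c(|A_1|+|A_2|)$ transfers verbatim, just as for linear Zarankiewicz; reducts are trivial.

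There is no real obstacle; the heavy lifting is in Lemmas \ref{lem: weak normal conj linear} and \ref{lem: comp union weak normal lin}. The only care needed is bookkeeping:
\begin{enumerate}
\item the degenerate cases of empty positive or negative parts are handled as above;
\item the DNF uses literals from finitely many fixed weakly normal relations in the same partition $(x;y)$, so all constants depend only on $E$ and $r$;
\item in the interpretability step, Proposition \ref{prop: interp, fields, comb lin}(2) is stated only for (almost) linear Zarankiewicz, so I would note explicitly that its proof also applies to combinatorial linearity, since it does not use $|A_1|=|A_2|$.
\end{enumerate}
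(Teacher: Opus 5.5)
Your proof is correct and is essentially the paper's argument. You reduce to a single DNF disjunct via Lemma \ref{lem: lin disj lin}, make the positive part weakly normal by Lemma \ref{lem: conj of w normal}, make the negative part combinatorially linear by Lemma \ref{lem: comp union weak normal lin}, and combine them with Lemma \ref{lem: weak normal conj linear}; the paper handles an empty positive part by the convention that the empty conjunction is the normal relation $X\times Y$, which amounts to your case split. As a minor aside, Proposition \ref{prop: interp, fields, comb lin}(2) already states preservation of combinatorial linearity under interpretation, and only its written proof is phrased for Zarankiewicz, so your remark concerns the proof rather than the statement.
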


\begin{proof}
Write $E$ in disjunctive normal form:
\[
E(x,y)=\bigvee_{q=1}^N\left(P_q(x,y)\land \neg\bigvee_{j=1}^{m_q}N_{q,j}(x,y)\right),
\]
where each $P_q$ is a finite conjunction of weakly normal relations, and each $N_{q,j}$ is weakly normal. By Lemma \ref{lem: lin disj lin}, it is enough to prove that each disjunct is combinatorially linear. So fix $q$. By Lemma \ref{lem: conj of w normal}, $P_q$ is weakly normal (with the convention that the empty conjunction is $X\times Y$, hence normal). By Lemma \ref{lem: comp union weak normal lin}, the relation $\neg\bigvee_{j=1}^{m_q}N_{q,j}$ is combinatorially linear. Hence their conjunction is combinatorially linear by Lemma \ref{lem: weak normal conj linear}. This proves the theorem.
\end{proof}

\section{Combinatorial linearity and strong Erd\H{o}s-Hajnal in Hrushovski's ``ab initio'' constructions}

In this section we consider the same combinatorial questions in the Hrushovski's ``ab initio'' constructions \cite{hrushovski1993new}. Following the notation in \cite{evans2005trivial}, fix integers $r \geq 2$ and $m,n \geq 1$ and let $\mathcal{C}$ be the class of finite $r$-uniform hypergraphs, viewed as structures with a single $r$-ary relation $R(x_1, \ldots, x_r)$ invariant under permutation of coordinates and irreflexive, equipped with the predimension function $\delta(B) = n |B| - m | R(B) |$ and associated self-sufficient embedding $\leq$. 
We let $M_0$ be the infinite rank $\omega$-stable, $\omega$-saturated  uncollapsed generic structure with respect to $(\mathcal{C}_0, \leq)$, and $T_{\textrm{uncol}} := \Th(M_0)$.  We also fix an appropriate function $\mu$  from the set of isomorphism types of minimally simply algebraic extensions in $\mathcal{C}_0$ to the non-negative integers and let  $M := D_{\mu}$ denote the ``collapsed'' strongly minimal generic with respect to the subclass $( \mathcal{C}_{\mu}, \leq )$, and $T_{\textrm{col}} := \Th(M)$ (see e.g.~for the details \cite{zbMATH00663787}).
For most values of $r,m,n$ and appropriate $\mu$, neither $M_0$ nor $M$ is $1$-based (they are only CM-trivial).  In the proof below, we assume some familiarity with the construction, and refer to \cite{zbMATH00663787} for an exposition.

\begin{theorem}\label{thm: ab init lin Zar and sEH}
	Both Hrushovski's collapsed and uncollapsed ab initio structures satisfy linear Zarankiewicz and strong Erd\H{o}s-Hajnal.
\end{theorem}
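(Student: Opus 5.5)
Both properties will be derived from the structure of definable sets in ab initio structures, together with the closure results already established. The key input is the quantifier-reduction for the generic: in both $M_0$ and $M=D_\mu$, every formula $\varphi(x,y)$ is equivalent, modulo the theory, to a Boolean combination of formulas of the form $\exists \bar z\,\psi(x,y,\bar z)$. Here $\psi$ is quantifier free and describes a finite ``closed'' extension of $xy$. Working instead with self-sufficient closures gives the following picture: $\tp(ab)$ is determined by the isomorphism type of $\operatorname{cl}(ab)$, and $\operatorname{cl}(ab)$ is the free amalgam of $\operatorname{cl}(a)$ and $\operatorname{cl}(b)$ over $\operatorname{cl}(a)\cap\operatorname{cl}(b)$, together with finitely many minimally simply algebraic extensions that hang on both sides. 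This is CM-triviality in its concrete form.

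\textbf{Step 1: reduce to ``local'' relations.} I will show that the bipartite relation defined by $\varphi$ on $M^x\times M^y$ is a finite Boolean combination of two kinds of relations.
\begin{enumerate}
\item Relations of the form ``$\operatorname{cl}(a)$ and $\operatorname{cl}(b)$ share a point in prescribed positions''. These are of the form $g(a)\ni h(b)$ with $g(a)$ a finite set of size bounded uniformly in $a$. Such relations are weakly normal: a point $a$ lies in $E(X,b)$ only through its boundedly many closure coordinates, and $E(X,b)$ depends only on the relevant coordinate value of $b$. So the reduced relation has bounded $x$-fibres in the sense of Remark~\ref{rem:=000020wn=000020and=000020reduced=000020implies=000020bounded}.
\item Relations of the form ``there is a copy of a fixed minimally simply algebraic (or, in the uncollapsed case, a bounded-size $0$-extension) set $D$ over $\operatorname{cl}(a)\cup\operatorname{cl}(b)$ using a prescribed part of each''. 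For these I will use the predimension inequality $\delta\ge 0$ on closed sets, which gives a uniform bound: each fixed base $A_0\subseteq\operatorname{cl}(a)$ has only boundedly many such extensions meeting other closed sets nontrivially (the $\mu$-bound in the collapsed case, and an $\omega$-saturation and predimension count in the uncollapsed case). This again gives weak normality, or at least bounded fibres on one side after passing to the quotient of $b$ by fibre equality.
\end{enumerate}
I expect Step~1 to be the main obstacle. Concretely, I need to check that type-determination by closure isomorphism type yields finitely many such atomic pieces uniformly. I would first attempt it via the ``sparse pseudoplane'' analysis of \cite{evans2005trivial}, which already isolates the finiteness of $\operatorname{cl}(a)\cap\operatorname{cl}(b)$-configurations.

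\textbf{Step 2: conclude.} Once $\varphi$ is expressed as a Boolean combination of weakly normal (not necessarily definable) relations, the results already proved apply directly.
\begin{enumerate}
\item Theorem~\ref{thm: bool comb weak norm is comb lin} is purely combinatorial and does not use definability, so it gives combinatorial linearity, and hence linear Zarankiewicz.
\item Proposition~\ref{prop:=000020DSEH=000020for=000020weakly=000020normal=000020relations} together with Remark~\ref{rem:=000020dseh=000020closed=000020under=000020boolean=000020combs}(4) gives dsEH, and hence strong Erd\H{o}s-Hajnal.
\end{enumerate}

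\textbf{Fallback if full weak normality fails.} If some of the pieces in Step~1 are only $1$-semi-equations in the inclusion sense, I will still obtain sEH for them from Theorem~\ref{thm: 1-semieq sEH}. For linear Zarankiewicz I would then instead adapt the induction in Lemma~\ref{lem: comp union weak normal lin}, using the bounded number of fibres through a point.
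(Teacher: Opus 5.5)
There is a genuine gap, and it sits in Step~1, which carries all the content of your argument. Your Step~2 is sound: Theorem~\ref{thm: bool comb weak norm is comb lin} and the dsEH results for weakly normal relations are purely combinatorial, and this is in effect the paper's endgame.

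Your type (1) pieces are fine, since $g(a)=h(b)$ is normal. Your type (2) pieces, however, are in general not weakly normal. Weak normality needs a bound on the number of distinct fibres $E(X,b)$ through a \emph{fixed} $a$, i.e.\ a bound over the half-base $A_0\subseteq\operatorname{cl}(a)$ with $B_0\subseteq\operatorname{cl}(b)$ varying. The inequality $\delta\geq 0$ and the $\mu$-bound only bound the copies of $D$ over a fixed \emph{full} base $A_0\cup B_0$, and richness supplies infinitely many $B_0$.

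Concretely, take $r=3$, $m=n=1$ in $M_0$ and
\[
E(x,y)\;:=\;\exists d\,\exists e\,\bigl(d\neq e\wedge R(x,y,d)\wedge R(x,y,e)\wedge R(x,d,e)\bigr).
\]
\begin{enumerate}
\item Since $\delta(\{a,b,d,e\})=\delta(\{a,b\})-1$, any witnesses lie in $\sscl(ab)$. So this is a special formula of the kind your type (2) must cover.
\item For a point $a$ with $\sscl(a)=\{a\}$ one checks $\{a\}\leq\{a,b,d,e\}$, so by richness infinitely many $b$ satisfy $E(a,b)$.
\item For two such $b\neq b'$, strongly adjoin to $\sscl(abb')$ new points $a',d',e'$ forming the configuration with $a'$ in the role of $x$ and $b$ in the role of $y$. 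No relation of this self-sufficient set contains both $a'$ and $b'$. So witnesses for $E(a',b')$ would lie outside it and lower $\delta$ by at least $1$, whence $a'\in E(X,b)\setminus E(X,b')$.
\end{enumerate}
Thus infinitely many pairwise $\subseteq$-incomparable fibres pass through $a$. So $E$ is neither weakly normal nor a $1$-semi-equation, and your fallback does not apply either. For linear Zarankiewicz that fallback would in any case require a strengthening of the question left open in Problem~\ref{prob: 1-semieq dist cell}.

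Your structural picture of $\operatorname{cl}(ab)$ is also inaccurate. The set $\operatorname{cl}(a)\cup\operatorname{cl}(b)$ can carry relations meeting both sides outside the intersection, so it is not a free amalgam. Moreover, the intrinsic part of $\sscl(ab)$ over it has unbounded size.

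The missing ingredient is non-definable extra structure that breaks the symmetry in such cross-configurations. Supplying it is exactly the content of the result from \cite{evans2005trivial} that the paper uses: the uncollapsed $M_0$ is a reduct of a one-based structure $N_0$. The sparse-pseudoplane proposition does not help here; it goes the other way, deriving sparseness from one-basedness plus nfcp. Theorems~\ref{thm: sEH in 1based} and~\ref{thm: bool comb weak norm is comb lin}, applied in $N_0$, then give both properties for $M_0$.

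The collapsed case is not handled by a parallel $\mu$-analysis; it is transferred from $M_0$. Strongly embed $M\leq M_0$. Every $T_{\textrm{col}}$-formula is a Boolean combination of special formulas, and these are preserved by self-sufficient embeddings. So every relation definable in $M$ is the trace of one definable in $M_0$, and both properties pass down.
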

\begin{proof}
	 Let $M \models T_{\textrm{col}}$ be a model of the collapsed ab initio. We can choose a self-sufficient embedding $f$ of $M$ into a model $M_0 \models T_{\textrm{uncol}}$ of the uncollapsed ab initio. Say that a formula $\varphi(\bar{x}) \in L$ is special if it is of the form $\varphi(\bar{x}) = \exists \bar{y} \psi(\bar{x}, \bar{y})$ with $\psi(\bar{x}, \bar{y})$ quantifier-free and such that if $M \models \psi(\bar{a}, \bar{b}) $ then $\bar{b} \in \sscl(\bar{a})$ (the self-sufficient closure). By the standard back-and-forth for strong substructures, we have that in $T_{\textrm{col}}$  every formula is equivalent to a Boolean combination of special formulas. And if $\varphi(\bar{x})$ is a special formula, then for every tuple $\bar{a}$ in $M$ we have $M \models \varphi(\bar{a}) \iff M_0 \models \varphi(f(\bar{a}))$. Hence, for any formula $\varphi(\bar{x},\bar{y})$ there is  a formula $\varphi'(\bar{x},\bar{y})$ equivalent to $\varphi(\bar{x},\bar{y})$ in $T_{\textrm{col}}$, so that for every  $\bar{a}, \bar{b}$ in $M$ we have $M \models \varphi(\bar{a}, \bar{b}) \iff M_0 \models \varphi(f(\bar{a}), f(\bar{b}))$.
It follows that if every formula satisfies (almost) linear Zarankiewicz (or sEH) in $M_0$, then every formula satisfies (almost) linear Zarankiewicz (respectively, sEH) in $M$.

But by \cite{evans2005trivial} there exists a one-based structure $N_0$ so that $M_0$ is a reduct of $N_0$. It thus follows by Theorem \ref{thm: sEH in 1based} that $M_0$ satisfies strong Erd\H{o}s-Hajnal, and by Theorem \ref{thm: bool comb weak norm is comb lin} that $M_0$ is combinatorially linear.
	\end{proof}

\section{Some questions and future research directions}

\begin{problem}
Generalize these results to definable hypergraphs.
\end{problem}
\noindent E.g.~\cite{basit2021zarankiewicz} establishes an appropriate  generalization of  ``almost linear Zarankiewicz'' for hypergraphs given by Boolean combinations of basic relations.

\begin{problem}\label{prob: 1-semieq dist cell}
	Does every $1$-semi-equational formula admit a distal cell decomposition (in some expansion)? Does every Boolean combination of $1$-semi-equations satisfy almost linear Zarankiewicz?
\end{problem}

\begin{remark}\label{rem: (2,1)-semieq almost lin Zar}
	It was observed in \cite[Proposition 2.23]{chernikov2023semi} that every $(2,1)$-semi-equation is a Boolean combination of  basic relations and admits a distal cell decomposition, but for $k \geq 3$ this remains open. In particular, by Fact \ref{fac: bool basic implies almost lin Zar}, every Boolean combination of $(2,1)$-semi-equations satisfies almost linear Zarankiewicz.
\end{remark}

\begin{problem}
	Does every meet tree $(M, <, \land)$ admit a $1$-semi-equational expansion?
\end{problem}

\begin{remark}
	In an infinitely-branching dense meet tree $\mathcal{T} = (M, <, \land)$, the formula $x < y$ is not a Boolean combination of \emph{definable} $1$-semi-equations \cite[Theorem 3.13]{chernikov2023semi}. But a direct case analysis shows that every partitioned formula is a Boolean combination of $(2,1)$-semi-equations in some expansion (see Lemma 3.11,  proof of Theorem 3.12 and Remark 3.16 in \cite{chernikov2023semi}). Hence $\mathcal{T}$ satisfies almost linear Zarankiewicz (by Remark \ref{rem: (2,1)-semieq almost lin Zar}).
\end{remark}

\begin{problem}
Does every stable $1$-based theory admit a distal expansion? Does the collapsed ab initio admit a distal expansion?
\end{problem}
\noindent We note that the proof of Theorem \ref{thm: ab init lin Zar and sEH} shows that the non-existence of a distal expansion of the collapsed ab initio cannot be witnessed by any of the usual combinatorial obstructions, e.g.~every formula in it admits a (non-definable) distal cell decomposition, etc. It is also open if the collapsed ab initio admits a $1$-based expansion (which would necessarily be of infinite rank, see the discussion in \cite{evans2005trivial}).

 One can also consider other Hrushovski-style constructions, e.g.:
\begin{problem}
	Is  the fusion of two vector spaces combinatorially linear / does it admit a distal expansion?  Many Hrushovski constructions are known to be relatively CM-trivial \cite{blossier2015geometries}; do they also admit relatively one-based expansions?
\end{problem}

\begin{problem}
Does every stable theory satisfying linear Zarankiewicz admit a $1$-based expansion? 
\end{problem}

\begin{problem}\label{prob: almost linear, but not linear T}
	Do there exist stable theories satisfying almost linear Zarankiewicz, but not linear Zarankiewicz?  Strongly minimal theories?
\end{problem}

\noindent We note that this is  possible at the level of a single formula in a stable theory: 
\begin{prop}
	There exists a stable theory $T$ and a formula $\varphi(x,y)$  satisfying almost linear Zarankiewicz, but not linear Zarankiewicz and not strong EH.
\end{prop}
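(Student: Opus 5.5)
The plan is to realize $\varphi$ as the edge relation of a disjoint union of finite bipartite graphs $G_j$. Each $G_j$ should be $d_j$-regular, with $d_j\to\infty$, girth $g_j\to\infty$, and good spectral expansion. Concretely, I would take bipartite Ramanujan graphs (e.g.\ the Lubotzky--Phillips--Sarnak graphs, or their bipartite double covers), which give degree $d=p+1$, second eigenvalue $\le 2\sqrt{d-1}$, and girth $\gtrsim \tfrac43\log_{d-1}|G|$. The structure $\mathcal M$ has unary predicates $P_L,P_R$ for the two sides and the edge relation $E$; the union contains each $G_j$, possibly several times. We set $T:=\Th(\mathcal M)$ and $\varphi(x,y):=E(x,y)\wedge P_L(x)\wedge P_R(y)$.

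\textbf{Stability.} The Gaifman graph of $\mathcal M$ is superflat in the sense of Podewski--Ziegler. A $\leq r$-subdivision of $K_3$ is a cycle of length $\le 3(r+1)$. Since $g_j\to\infty$, such cycles can only live in the finitely many $G_j$ with small girth, and these are finite graphs of bounded size, so they cannot contain a $\le r$-subdivision of $K_m$ once $m$ is large. Hence $\mathcal M$ is superflat, so $T$ is (super)stable by Podewski--Ziegler. This step is just a citation.

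\textbf{Almost linear but not linear Zarankiewicz.} Linear Zarankiewicz fails already with $t=2$. All girths are at least $4$, so each $G_j$ is $K_{2,2}$-free. Taking $A_1,A_2$ to be the two sides of $G_j$, both of size $n_j$, gives $d_j n_j$ edges with $d_j\to\infty$.

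For almost linear Zarankiewicz, fix $t$ and $\varepsilon$, and let $A_1,A_2$ have size $n$. Split the edge count by component. By the Moore bound, a graph of girth $\ge g$ on $N$ vertices has at most $N+N^{1+2/(g-2)}$ edges. So the components with $g_j\geq 2/\varepsilon+2$ together contribute at most $\sum_j 2N_j^{1+\varepsilon}\le 2(2n)^{1+\varepsilon}$, where $N_j$ is the number of chosen vertices in that component. Only finitely many isomorphism types of components have smaller girth, and these have bounded size $s$, hence bounded degree. Their contribution is therefore at most $s\cdot 2n$, which is linear. (The hypothesis of $K_{t,t}$-freeness is not even needed here.)

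\textbf{Failure of sEH.} This is the step I expect to be the main obstacle, and it is where expansion is used. Fix $\beta>0$ and take $\mu,\nu$ uniform on the two sides of one $G_j$, each side of size $n$. A complete pair with both sides of measure $>\beta$ would contain a $K_{2,2}$ as soon as $\beta n\ge 2$, which is impossible. For an anticomplete pair $X_0,Y_0$ of sizes $\ge\beta n$, I would apply the bipartite expander mixing lemma:
\[
\Bigl|e(X_0,Y_0)-\tfrac{d_j}{n}|X_0||Y_0|\Bigr|\le\lambda_j\sqrt{|X_0||Y_0|}, \qquad \lambda_j\le 2\sqrt{d_j-1}.
\]
Since $e(X_0,Y_0)=0$, this gives $d_j\beta^2 n\le 2\sqrt{d_j}\,\beta n$, that is, $\beta\le 2/\sqrt{d_j}$. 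Because $d_j\to\infty$, no fixed $\beta$ works, so $\varphi$ fails sEH.

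The delicate points are, first, checking that a family with all three properties simultaneously ($d_j\to\infty$, girth $\to\infty$, spectral gap) exists and can be cited. LPS gives this with $d_j=p_j+1$ over growing primes, and one can take bipartite double covers if needed. Second, making sure the Podewski--Ziegler superflatness criterion applies to the relational structure with predicates, which is fine since unary predicates do not affect it.
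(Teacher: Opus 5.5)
Your proposal is correct and uses the same example as the paper: a disjoint union of Ramanujan graphs whose degrees and girths both tend to infinity. The paper takes Morgenstern's $(q+1)$-regular graphs with at least $q^{6q}$ vertices and girth $>3q$, and works with $E$ itself on $G\times G$ rather than restricting to two colour classes. Your refutation of linear Zarankiewicz is the paper's argument. The differences are in how the other properties are verified:
\begin{itemize}
\item \emph{Stability and failure of sEH.} The paper cites \cite{jiang2020regular} (nowhere denseness of the family, and Theorem 50 there). You instead check superflatness by hand and derive the failure of sEH from the bipartite expander mixing lemma, with the explicit bound $\beta\le\lambda_j/d_j<2/\sqrt{d_j}$.
\item \emph{Almost linear Zarankiewicz.} This is the genuine divergence. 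The paper combines the VC-density bound $\pi_{\mathcal F}(n)\le c\,n^{|x|+\delta}$ for nowhere dense graphs \cite{pilipczuk2018number} with the Zarankiewicz bound for set systems of bounded VC-density \cite{fox2017semi}, extended to the real exponent $1+\delta$. You use only three facts: the Moore bound $e\le N+N^{1+2/(g-2)}$ on each high-girth component, superadditivity of $x\mapsto x^{1+\varepsilon}$, and bounded degree on the finitely many low-girth isomorphism types.
\end{itemize}
Your route is elementary and self-contained. It even gives $|\varphi(A_1,A_2)|=O(n^{1+\varepsilon})$ with no $K_{t,t}$-freeness hypothesis at all. The paper's route is heavier, but it is not tied to the girth of this particular example: it yields almost linear Zarankiewicz for every formula $\varphi(x,\bar y)$ with $x$ a single variable in any nowhere dense graph. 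That is the natural starting point for the open problem about arbitrary tuples stated right after.

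One small slip: girth at least $4$ only rules out triangles. Since $K_{2,2}$ is a $4$-cycle, it does not give $K_{2,2}$-freeness; for bipartite graphs you need girth at least $6$. This is harmless because $g_j\to\infty$: use only components with $g_j>4$. That restriction is needed both when refuting linear Zarankiewicz and in the complete-pair case of your sEH argument.

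Also, for the LPS graphs $X^{p,q}$ the girth bound is only logarithmic in $q$ to base $p$. So for each degree $p_j+1$ you must choose $q_j$ large enough that the girths actually tend to infinity.
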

\begin{proof}
We use the example suggested in \cite[Theorem 50]{jiang2020regular}, i.e.~a sequence of Ramanujan graphs with growing degree and girth (Morgenstern's construction \cite{morgenstern1994existence}; we could also use a probabilistic construction as in \cite{brianski2021erdos}).  Namely, for an odd prime power $q$, we take a $(q+1)$-regular Ramanujan graph $G_q$ with $n_q \geq q^{6q}$ vertices and girth $>3q$. As shown in the proof of \cite[Theorem 50]{jiang2020regular}, the family $G_q$ is nowhere dense, hence taking $G$ to be a countable graph given by the disjoint union of $G_q$, $\Th(G)$ is stable, and the edge relation $E(x,y)$ does not satisfy strong EH. 

For $q \geq 3$, girth $> 3q$ implies that $G_q$ has no $4$-cycle, hence viewing $E$ as a binary relation on $G \times G$ it is $K_{2,2}$-free. But $G_q$ is $(q+1)$-regular, so by the Handshaking Lemma $2|E(G_{q})| = (q+1)|V(G_q)|$. As $q$ was unbounded, it follows that $E$ does not satisfy linear Zarankiewicz in $G$.

On the other hand, by the strong VC-density bounds for nowhere dense graphs \cite{pilipczuk2018number}, for every nowhere dense graph $G$, formula $\varphi(\bar{x}, \bar{y})$ and $\delta > 0$ there exists some $c = c(\varphi, \delta)$ so that the shatter function satisfies $\pi_{\mathcal{F}}(n) \leq c n^{|x|+\delta}$, where $\mathcal{F}$ is the family of subsets of $G^{|y|}$ of the form $\varphi(\bar{a},G^{|y|})$ for $\bar{a}$ varying over $G^{|x|}$. Now \cite[Theorem 2.1]{fox2017semi} shows that for every real $c,d$ and integer $t$, if $\pi_{\mathcal{F}}(n) \leq c n^{d}$ and the bipartite graph defined by $\varphi(\bar{x},\bar{y})$ is $K_{t,t}$-free, then for some $c' = c'(c, d, t)$ and any finite $A \subseteq G^{|x|}, B \subseteq G^{|y|}$ with $|A|= m, |B| = n$ we have $|\varphi(A,B)| \leq c' (m n^{1 - 1/d} + n)$ (it is stated there for integer $d$, but the proof goes through unchanged with arbitrary real $d > 1$, possibly increasing the constant). Taking $d := 1 + \delta$ implies almost linear Zarankiewicz for $\varphi(x,\bar{y})$ with $x$ a singleton, in particular for $E(x,y)$.
 \end{proof}

\begin{remark}
	This example, combined with Theorem \ref{thm: 1-semieq sEH}, also shows that nowhere dense graphs need not admit $1$-based, or even $1$-semi-equational expansions.
\end{remark}

\begin{problem}
	Do arbitrary formulas $\varphi(\bar{x},\bar{y})$ in nowhere dense graphs, with both $\bar{x},\bar{y}$ arbitrary finite tuples, satisfy almost linear Zarankiewicz? 
\end{problem}

\begin{problem}
 (``Combinatorial trichotomy conjecture'')	Is there a  strongly minimal theory not satisfying (almost) linear Zarankiewicz and not interpreting an infinite field? Is there a strongly minimal theory with a distal expansion not satisfying the trichotomy?
\end{problem}
\noindent We note that for $o$-minimal $T$, it is proved in \cite{basit2021zarankiewicz} (using the $o$-minimal trichotomy) that almost linear Zarankiewicz holds if and only if there is no infinite definable field.
\begin{problem}
	If all formulas in a theory satisfy strong EH, does it have a distal expansion?
\end{problem}
\noindent An example suggested by Martin Bays  and presented in \cite[Theorem 5.6]{tong2026zarankiewicz} gives a relation $\varphi(x,y)$ satisfying strong EH, but not definable in any distal (or even NTP$_2$, by \cite[Corollary 3.3]{chernikov2015groups}) structure.

\section{Acknowledgements}
We thank Martin Bays, David Evans, Martin Hils, Chris Laskowski, Aris Papadopulos, Pierre Simon and Szymon Toru\'nczyk  for some helpful conversations on the topics of this paper throughout the years. We also thank Institut Henri Poincar\'e in Paris for its hospitality during the ``Model theory, Combinatorics and valued fields'' term in the Spring trimester of 2018. Chernikov was partially supported  by the NSF Research Grant DMS-2246598 and by the Alexander von Humboldt Foundation. Starchenko was partially supported by the Simons Foundation.

\bibliographystyle{plainurl}
\bibliography{ref}

\end{document}